\documentclass[a4paper,12pt]{article}

\makeatletter
\@addtoreset{footnote}{page}
\makeatother

\usepackage{style-enumitem}

\usepackage{amsthm}
\usepackage{amsmath,amssymb,latexsym,amsfonts,mathrsfs}

\renewcommand{\Im}{\mathop{\rm Im}}

\renewcommand{\tilde}{\widetilde}
\renewcommand{\bar}{\overline}
\newcommand{\un}[1]{\underline{#1}}

\newcommand{\absol}[1]{\left| #1 \right|} 
\newcommand{\norm}[1]{\left\| #1 \right\|} 
\newcommand{\rbra}[1]{\!\left( #1 \right)} 
\newcommand{\cbra}[1]{\!\left\{ #1 \right\}} 
\newcommand{\sbra}[1]{\!\left[ #1 \right]} 

\newcommand{\bC}{\ensuremath{\mathbb{C}}}
\newcommand{\bD}{\ensuremath{\mathbb{D}}}
\newcommand{\bE}{\ensuremath{\mathbb{E}}}

\newcommand{\bN}{\ensuremath{\mathbb{N}}}

\newcommand{\bP}{\ensuremath{\mathbb{P}}}

\newcommand{\bR}{\ensuremath{\mathbb{R}}}

\newcommand{\cA}{\ensuremath{\mathcal{A}}}
\newcommand{\cB}{\ensuremath{\mathcal{B}}}

\newcommand{\cD}{\ensuremath{\mathcal{D}}}
\newcommand{\cE}{\ensuremath{\mathcal{E}}}
\newcommand{\cF}{\ensuremath{\mathcal{F}}}

\newcommand{\cJ}{\ensuremath{\mathcal{J}}}

\newcommand{\cL}{\ensuremath{\mathcal{L}}}

\newcommand{\cT}{\ensuremath{\mathcal{T}}}

\newcommand{\bfF}{\ensuremath{{\mathbf{F}}}}

\newcommand{\bfH}{\ensuremath{{\mathbf{H}}}}
\newcommand{\bfI}{\ensuremath{{\mathbf{I}}}}

\newcommand{\bfL}{\ensuremath{{\mathbf{L}}}}

\newcommand{\bfP}{\ensuremath{{\mathbf{P}}}}
\newcommand{\bfQ}{\ensuremath{{\mathbf{Q}}}}
\newcommand{\bfR}{\ensuremath{{\mathbf{R}}}}

\newcommand{\bfT}{\ensuremath{{\mathbf{T}}}}

\newcommand{\bfV}{\ensuremath{{\mathbf{V}}}}
\newcommand{\bfW}{\ensuremath{{\mathbf{W}}}}

\newcommand{\bfb}{\ensuremath{{\mathbf{b}}}}

\newcommand{\bff}{\ensuremath{{\mathbf{f}}}}
\newcommand{\bfg}{\ensuremath{{\mathbf{g}}}}

\newcommand{\bfn}{\ensuremath{{\mathbf{n}}}}

\newcommand{\bft}{\ensuremath{{\mathbf{t}}}}

\newcommand{\ttd}{\ensuremath{{\mathtt{d}}}}

\newcommand{\ttn}{\ensuremath{{\mathtt{n}}}}

\newcommand{\ttt}{\ensuremath{{\mathtt{t}}}}

\newcommand{\ttE}{\ensuremath{{\mathtt{E}}}}

\newcommand{\diff}{\ensuremath{{\mathrm{d}}}}

\theoremstyle{plain}
\newtheorem{Thm}{Theorem}[section]

\newtheorem{Lem}[Thm]{Lemma}

\theoremstyle{definition}
\newtheorem{Ass}[Thm]{Assumption}

\newtheorem{Rem}[Thm]{Remark}

\numberwithin{equation}{section}

\makeatletter
\renewcommand\section{\@startsection {section}{1}{\z@}%
                                   {-3.5ex \@plus -1ex \@minus -.2ex}%
                                   {2.3ex \@plus.2ex}%
                                   {\normalfont\large\bf}}
\makeatother

\makeatletter
\renewcommand\subsection{\@startsection {subsection}{1}{\z@}%
                                   {-3.5ex \@plus -1ex \@minus -.2ex}%
                                   {2.3ex \@plus.2ex}%
                                   {\normalfont\normalsize\bf}}
\makeatother

\usepackage{color}
\allowdisplaybreaks[3]
\usepackage{mathtools}
\mathtoolsset{showonlyrefs=true}

\begin{document}

\begin{center}
{\Large \bf 
Scale operators for Cram\'er--Lundberg type Markov additive processes 
}
\end{center}
\begin{center}
Kei Noba
\end{center}

\begin{abstract}
In this paper, we consider Markov additive processes (MAPs) whose additive components are $\bR$-valued and have no positive jumps.
Specifically, we assume that the modulators are Feller processes with only finitely many jumps on each bounded time interval and that the additive components also have only finitely many jumps on each bounded time interval.
For such MAPs, we characterize the associated scale operators and use them to solve the two-sided exit problem, which concerns the first entrance time into one half-line on the event that it is reached before the other, and to characterize the potential measures of the processes killed when their additive components exit an interval.
Our proofs rely on properties of analogues of local times, properties of the exit systems obtained by pairing them with suitable kernels, and results on the extension of $C_0$-semigroups to $C_0$-groups.
\end{abstract}

\section{Introduction}
Characterizing the distributions of hitting times of sets and potential measures is an important problem in the study of $\bR$-valued L\'evy processes.
Here we focus on first passage times into half-lines, particularly the two-sided exit problem, which concerns the Laplace transform of the first entrance time into one half-line restricted to the event that it is reached before the other.
We also consider the potential measures of processes killed upon exiting an interval.
A fundamental tool is the Wiener--Hopf factorization (see, for example, \cite[Section 6]{Kyp2014}), which characterizes the distributions of the supremum and infimum of a L\'evy process up to an independent exponential time.
This factorization provides a starting point for the analysis of two-sided exit problems and the characterization of entrance distributions and potential measures for meromorphic L\'evy processes in \cite{KuzKypPar2012}.
It also underlies the characterization in \cite{KypWat2014} of the $0$-potential measure of a process killed upon exiting an interval.
In this paper, we are particularly interested in spectrally negative L\'evy processes, that is, L\'evy processes with no positive jumps whose paths are not monotone.
Each such process has associated functions called scale functions, in terms of which two-sided exit identities and the potential measures of the process killed upon exiting an interval can be expressed (see \cite[Section 8]{Kyp2014} or \cite{KuzKypRiv2012} for background on scale functions).
The theory of scale functions can be developed using Wiener--Hopf factors, as in \cite[Theorem 8.1]{Kyp2014}, or through potential-theoretic arguments, as in \cite{Pis2005}.
This theory has been extended to various $\bR$-valued Markov processes with no positive jumps.
In particular, \cite{Nob2020a} defines scale functions for such processes using excursion measures and establishes their fundamental properties through excursion theory and properties of local times.
As discussed in \cite[Section 10]{Kyp2014} and \cite[Section 1.2]{KuzKypRiv2012}, scale functions have important applications in stochastic control, risk theory, queueing theory, and other areas.
\par
In this paper, we study Markov additive processes (MAPs), which generalize L\'evy processes.
The general theory of MAPs was developed in \cite{Cin1972a, Cin1972b}.
A MAP is a Markov process consisting of an additive component $\{X_t:t\geq 0\}$ and a modulator $\{Y_t:t\geq 0\}$.
The modulator is itself a Markov process.
Here we assume that both the MAP and its modulator have the Feller property.
Informally, the additive component behaves like a L\'evy process whose L\'evy--Khintchine characteristics vary with the state of the modulator.
We assume that the additive component is $\bR$-valued, has no positive jumps, and has paths that are not monotone; we refer to such MAPs as spectrally negative MAPs.
Our aim is to solve the two-sided exit problem for the additive component and characterize the potential measures of the MAP killed when its additive component exits an interval.
When the modulator is a Markov chain, the theory of scale matrices was developed in \cite{KypPal2008} and \cite{IvaPal2012} to address these problems.
In particular, \cite{IvaPal2012} defines scale matrices in terms of local times and develops their theory by exploiting this representation together with matrix-theoretic properties.
\par
Our aim is to develop a theory of scale functions for spectrally negative MAPs whose modulators are Feller processes that may evolve continuously.
In this setting, the corresponding objects are operators, which we call scale operators.
We restrict our attention to the case in which the modulator has only finitely many jumps on each bounded time interval, while the additive component also has only finitely many jumps on each bounded time interval and has paths of bounded variation.
Since this class generalizes Cram\'er--Lundberg processes, we use the term ``Cram\'er--Lundberg type MAPs'' in the title.
As in the existing theory, we characterize scale operators through their Laplace transforms and express both two-sided exit identities and the potential measures of the MAP killed when its additive component exits an interval in terms of scale operators.
The key ingredients of our proofs are two complementary descriptions of scale operators, one in terms of local times and the other in terms of exit systems, which generalize excursion measures, together with the extendibility of the relevant $C_0$-semigroups to $C_0$-groups.
In particular, the restrictions on jump activity described above allow us to apply perturbation theorems for generators to preserve this extendibility.
The use of these techniques is a major difference from previous approaches.
This group extension provides the invertibility needed in our operator setting, a property readily available in earlier work on spectrally negative L\'evy processes and spectrally negative MAPs with Markov chain modulators.
\par
The remainder of this paper is organized as follows.
Section 2 introduces the MAPs considered in this paper, reviews relevant known results, and presents the constructions of the local times and exit systems used in our analysis.
Section 3 presents the main results, and Section 4 provides their proofs.

\section{Preliminaries}
\subsection{Cram\'er--Lundberg type Markov additive processes}
Let $E$ be a locally compact, separable metric space. 
Let $(X, Y)=(\Omega, \cF, \cF_t, (X_t, Y_t),\theta_t, \bP_{(x,y)})$ be an $\bR\times E$-valued Feller process with no killing (for the definition of Feller processes, see, e.g., \cite[Chapter 17]{Kal2021}). 
Here, $X_t$ takes values in $\bR$ and $Y_t$ takes values in $E$. 
We write $\cA$ for the generator of $(X, Y)$ on $C_0(\bR\times E)$. 
We assume that 
$(X, Y)$ satisfies, for non-negative measurable function $f$ and $(x,y)\in\bR\times  E $, 
\begin{align}
\bE_{(x,y)}\sbra{f(X_t, Y_t)}=\bE_{(0,y)}\sbra{f(x+X_t, Y_t)}. \label{additive_property}
\end{align}
Then, the process $(X, Y)$ is called a MAP. 
Note that $ Y=(\Omega, \cF, \cF_t, Y_t,\theta_t, \bP^Y_{y})$ where $ \bP^Y_{y}:= \bP_{(x,y)}$ is also a Feller process. 
\par
For a locally compact, separable metric space $S$, we write $C_0(S)$ and $C_b(S)$ for the sets of continuous functions vanishing at infinity and bounded continuous functions, respectively. 
For a generator $\cL$, we write $D(\cL)$ for the domain of $\cD$. 
\par
We impose the following assumptions on $Y$. 
\begin{Ass} \label{Ass211}
There exist a $E$-valued Feller process $ Y^\prime=(\Omega^\prime, \cF^\prime, \cF^\prime_t, Y^\prime_t,\theta^\prime_t, \bP^{Y^\prime}_{y})$ and a kernel $\nu$ from $E$ to $E$ satifying the following conditions. 
\begin{enumerate} 
\item It holds $\sup_{y\in E}\nu(y, E)<M_\nu$ for some $M_\nu>0$. 
In addition, for $f\in C_b(E^2)$ such that $\lim_{y^\prime \to\infty} \sup_{y\in E}\absol{f(y, y^\prime)}=0$, 
\begin{align}
y\mapsto \int_E f(y,  y^\prime)\nu(y, \diff y^\prime)\in C_0(E). 
\end{align}
\item 
There exists a map $(y, t)\mapsto \phi_t(y)$ from $E\times [0,\infty)$ to $E$ such that $t\mapsto \phi_t(y)$ is continuous for each $y\in E$, $y\mapsto \phi_t(y)$ is continuous for each $t\geq 0$, and, for every $y\in E$, $Y_t=\phi_t(y)$ for all $t\geq 0$, $\bP^{Y^\prime}_y$-almost surely.
In addition, the semigroup $Y^\prime$ acting on $C_0(E)$ can be extended to a $C_0$-group $\{\bfP^{Y^\prime}_t:t\geq 0\}$. 
Consequently, $C_0$-group $\{\bfP^{Y^\prime}_t:t\geq 0\}$ consists of surjective isometries.
\item 
We define an operator $\cA_\nu$ as 
\begin{align}
\cA_\nu f (y)=\int_E (f(y_1)-f(y))\nu(y, \diff y_1),\qquad y\in E, 
\end{align}
for bounded measurable function $f$. 
Then, $\cA_\nu C_0(E) \subset C_0(E)$. 
We have
\begin{align}
\cA_Yf (y)=\cA_{Y^\prime}f (y)+\cA_\nu f(y),\qquad y\in E, 
\end{align}
for $f\in D(\cA_{Y})(=D(\cA_{Y^\prime}))$, where $\cA_{Y}$ and $\cA_{Y^\prime}$ are generators of $Y$ and $Y^\prime$, respectively. 
\end{enumerate}
\end{Ass}
In other words, $Y$ can be regarded as a process obtained from $Y^\prime$ by adding jumps governed by $\nu$ (see also Lemma \ref{LemB01}). 
Since the corresponding jump operator is a bounded perturbation of the generator of $Y^\prime$, \cite[p.79 and Theorem III.1.3]{EngNag2000} implies that the semigroup of $Y$ also extends to a $C_0$-group $\{\bfP^Y_t:t\geq 0\}$ on $C_0(E)$. 
\begin{Rem}\label{Rem202}
Let us explain why $Y^\prime$ is required to be the deterministic process specified in Assumption \ref{Ass211}. 
We need to use the main theorem of \cite{Dor1966} in the proof of Lemma \ref{Lem502}. 
For this purpose, the $C_0$-group $\{\bfP^{Y^\prime}_t:t\geq 0\}$ must be contractive and hence consists of surjective isometries.
Then, we can apply \cite[Theorem 7.1]{Beh1979} to $\bfP^{Y^\prime}_t$, and then
there exists a continuous map $\phi_t$ from $E$ to $E$ such that $\bfP^{Y^\prime}_tf(y)=f(\phi_t(y))$.
Since $Y^\prime$ is a Feller process, it is strongly continuous for non-negative times. Strong continuity for negative times follows from
\begin{align}
\lim_{t\downarrow0}\norm{\bfP^{Y^\prime}_{-t}f-f}_\infty
=\lim_{t\downarrow0}\norm{\bfP^{Y^\prime}_{t}\rbra{\bfP^{Y^\prime}_{-t}f-f}}_\infty
=\lim_{t\downarrow0}\norm{f-\bfP^{Y^\prime}_tf}_\infty=0.
\end{align}
Consequently, 
the map $t\mapsto\phi_t(y)$ is continuous. Thus, $Y'$ is a deterministic process with continuous paths.
\end{Rem}
By Remark \ref{Rem202}, \cite[Theorem 5.8]{Bas1979} and \cite[p.346]{Sha1988}, we have the following lemma.  
\begin{Lem}\label{LemB01}
For $q>0$, $y\in E$, $T\geq 0$, non-negative predictable process $\{Z_t:t\geq 0\}$ and non-negative measurable function $f$ on $\bR^2$, we have 
\begin{align}
\bE^Y_y \sbra{\sum_{t\in \bfT_\nu \cap[0, T]}Z_tf(Y_{t-}, Y_t)}=\bE^Y_0\sbra{\int_0^T Z_t
\rbra{\int_E f(Y_t , y) \nu(Y_t, \diff y)}\diff t},
\end{align}
where $\bfT_\nu= \{t\geq 0: Y_{t-}\neq Y_t \}$. 
\end{Lem}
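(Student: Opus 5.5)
The statement is a Lévy system (compensation) formula for the jumps of $Y$. The plan is to show that $\nu$, viewed as a kernel on $E$, is the Lévy system of $Y$ with additive functional $\diff t$, and then extend from nice $f$ and simple $Z$ by monotone class. (The subscript $0$ in $\bE^Y_0$ on the right should read $y$, and $f$ should be a function on $E^2$.)

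\textbf{Step 1: structure of the paths of $Y$.} By Remark \ref{Rem202}, between jumps $Y$ moves deterministically along the continuous flow $\phi_t$. By Assumption \ref{Ass211}(3), $\cA_Y=\cA_{Y^\prime}+\cA_\nu$ with $\cA_\nu$ bounded, since $\sup_y\nu(y,E)<M_\nu$. So $Y$ is obtained from $Y^\prime$ by bounded-rate jump perturbation. In particular, $\bfT_\nu$ is a.s. locally finite, and only jumps of $Y$ contribute to the sum.

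\textbf{Step 2: martingale identity for test functions.} I would apply \cite[Theorem 5.8]{Bas1979}, which identifies the Lévy system of a Hunt process from its generator, in the form used in \cite[p.346]{Sha1988}. The direct route uses Dynkin's formula for $g\in D(\cA_Y)$: the process $g(Y_t)-\int_0^t\cA_Y g(Y_s)\diff s$ is a martingale. Decompose $g(Y_t)-g(Y_0)$ into the continuous flow part plus the jump sum $\sum_{s\le t,\,s\in\bfT_\nu}(g(Y_s)-g(Y_{s-}))$. Along the flow, $\frac{\diff}{\diff s}g(\phi_s(y))=\cA_{Y^\prime}g(\phi_s(y))$ for $g\in D(\cA_{Y^\prime})$, so the flow part equals $\int_0^t\cA_{Y^\prime}g(Y_s)\diff s$. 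Subtracting,
\[
\sum_{s\le t,\,s\in\bfT_\nu}(g(Y_s)-g(Y_{s-}))-\int_0^t\int_E(g(y_1)-g(Y_s))\nu(Y_s,\diff y_1)\diff s
\]
is a martingale. To pass to general two-variable functions $f(Y_{s-},Y_s)$ with $f(y,y)=0$, I would instead invoke Benveniste--Jacod/\cite{Bas1979}. There, the generator splitting identifies the jump kernel as $\nu(y,\cdot)$ restricted to $E\setminus\{y\}$; mass of $\nu$ on the diagonal produces no actual jump and contributes zero to $\cA_\nu$. This yields the formula for $f\ge0$ vanishing on the diagonal and $Z\equiv1$.

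\textbf{Step 3: extension.} First take $Z=1_{(a,b]}1_A$ with $A\in\cF_a$, using the Markov property at time $a$. Then pass by monotone class to all non-negative predictable $Z$, and to all non-negative measurable $f$ by monotone convergence, with finiteness guaranteed by $\nu(y,E)<M_\nu$.

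\textbf{Main obstacle.} The main obstacle is the diagonal issue. If $\nu(y,\{y\})>0$, then the right side counts fictitious jumps that $\bfT_\nu$ does not see. I would handle this either by assuming, or arranging without loss of generality (replacing $\nu$ by $\nu$ restricted off the diagonal, which leaves $\cA_\nu$ unchanged), that $\nu(y,\{y\})=0$, or by restricting to $f$ vanishing on the diagonal. I would also need to verify the identification of the flow part with $\cA_{Y^\prime}$ pathwise, which relies on the group property from Remark \ref{Rem202}.
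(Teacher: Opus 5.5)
Your proposal is correct and takes essentially the paper's route. The paper proves the lemma by citation alone: Remark \ref{Rem202} shows $Y^\prime$ is a continuous flow and so contributes no jumps, \cite[Theorem 5.8]{Bas1979} gives the L\'evy system $\nu(y,\diff y_1)\,\diff t$ of the jump-perturbed process, and the L\'evy system formula \cite[p.346]{Sha1988} handles predictable $Z$ and two-variable $f$; your Steps 1--3 unpack exactly these ingredients.

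Your caveats are also well taken. The right-hand side should be taken under $\bE^Y_y$, and $f$ lives on $E^2$. The identity as stated also requires $\nu(y,\{y\})=0$, or else $f$ vanishing on the diagonal; the paper makes this assumption implicitly, for instance when it identifies the first jump time of $Y$ in Remark \ref{BassSection6}.
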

\par 
By \cite[Propositiom 2.20, Theorem 2.22 and Corollary 2.25]{Cin1972b}, it is known that for $\bP_{(x, y)}$, there exists the set of regular probability $\bP^\omega_x$, which represent the law of $X=\{X_t:t\geq 0\}$,with $\omega \in \Omega$ such that 
$X$ behaves as an additive process with some jumps such that its characteristic exponent is written using the path of $\{Y_t(\omega):t\geq 0\}$ under $\bP_{(x, y)}$.  
In particular, in this paper, we consider the case under the following assumption.
\begin{Ass}\label{Ass203}
The characteristic exponent of $X$ is given in the following form under $\bP_{(0, y)}$: for $t\geq 0$ and $\lambda \in\bR$, 
\begin{align}
\bP^\omega_{ 0}\sbra{e^{i\lambda X_t}}
=
&\rbra{\prod_{s\in[0,t]}F_{Y_{s-}(\omega),Y_s(\omega)}(i\lambda)}
\exp\cbra{ i\lambda D_t(\omega)+\int_{(-\infty , 0)} \rbra{e^{i\lambda x}-1 } N_t (\omega, \diff x)},
\label{L--K}
\end{align} 
where $F_{y_1, y_2}$, $D_t$ and $N_t$ satisfy the following conditions:
\begin{enumerate}
\item 
For $y_1, y_2\in E$, $\rho_{y_1, y_2}$ is the probability measure on $(-\infty, 0]$. 
For $y_1, y_2\in E$ with $y_1\equiv y_2$, $\rho_{y_1, y_2}\equiv\delta_0$. 
In addition, 
\begin{align}
(y_1, y_2)\mapsto
\int_{(-\infty, 0)} f(y_2, x)
\rho_{y_1, y_2}(\diff x)\in C_b(E^2), \qquad f\in C_b(E\times (-\infty, 0]). \label{finiteness1}
\end{align}
For $y_1, y_2\in E$ with $y_1\neq y_2$, 
\begin{align}
F_{y_1, y_2} (\gamma)=\int_{(-\infty, 0]} e^{\gamma x} \rho_{y_1, y_2}(\diff x),\qquad \gamma
\in\{z\in\bC:\Im z\geq 0\}. 
\end{align}
\item There exists a continuous function $\ttd$ on $E$ such that 
\begin{align}
\ttd (y) \in (\un{d}, \bar{d}),\qquad y\in E ,
\end{align}
for some $\un{d}, \bar{d}>0$, and
\begin{align}
D_t(\omega)=\int_0^t \ttd(Y_s(\omega)) \diff s,\qquad t\geq0. 
\end{align} 
\item There exists a kernel $\ttn$ from $ E $ to $(-\infty, 0)$ such that
$\sup_{y\in E}\ttn (y,\bR)<M_\ttn$ for some $M_\ttn>0$, 
\begin{align}
y\mapsto \bfn (f) (y):=\int_\bR f(y, x) \ttn(y, \diff x) \in C_b (E), \qquad f\in C_b(E\times (-\infty ,0] ), \label{finiteness2}
\end{align}
and
\begin{align}
N_t (\omega, \diff x) = \int_0^t \ttn(Y_s(\omega), \diff x)\diff s,\qquad t\geq 0. 
\end{align} 
\end{enumerate}
\end{Ass}
Then, the map $t\mapsto X_t$ has no positive jumps, $\bP_{(x, y)}$-a.s. for all $(x, y)\in\bR\times  E $ and then 
$\{x\}\times  E $ is a thin set but not a polar set for all $x\in\bR$. 
\par
As stated in the Introduction, one of the aims of this paper is to characterize the distributions of the following two hitting times:
\begin{align}
\tau^+_x:=\inf\{t>0:X_t>x\},\qquad  \tau^-_x:=\inf\{t>0:X_t<x\},\qquad x\in\bR. 
\end{align}
\begin{Rem}\label{Rem205}
By \cite[Theorem 3.8]{Kyp2014}, 
the expression obtained by substituting $\lambda=-i\beta$ with $\beta\geq 0$ into \eqref{L--K} is also well defined. 
In the case of a spectrally negative L\'evy process, this corresponds to what is called the Laplace exponent.
\end{Rem}
\begin{Rem}\label{regular}
Since $X$ has a positive drift and only finitely many jumps on every finite time interval, and does not jump at time $0$ by right-continuity, $(x,y)\in\bR\times E$ is regular for $(x,\infty)\times E$. Let
$T_a^+:=\inf\{t\geq 0:X_t\geq a\}$.
Then, by the strong Markov property at $T_a^+$, we have
$T_a^+=\tau_a^+$, $\bP_{(x,y)}$-a.s.
By quasi-left continuity of $(X, Y)$, $\lim_{\varepsilon\downarrow}X_{\tau^+_{a-\varepsilon}}=a$, so combining this with the preceding arguments 
$\lim_{\varepsilon\downarrow}\tau^+_{a-\varepsilon}=\tau^+_a$. 
\end{Rem}

\subsection{Occupation density formula and exit system}
In this subsection, we define a family of additive functionals that serve as local times and satisfy an occupation density formula, together with the associated exit systems.
\par
For $x\in\bR$, we define 
\begin{align}
\ell^x_t = \text{Card}\{s\in[0,t]: X_s=x\},\qquad t\geq 0.
\end{align} 
By \cite[p.28]{GemHor1980}, the map $(s, x, \omega) \mapsto \ell^x_t(\omega)$ on $[0, t]\times \bR\times \Omega$ is universally measurable. 
For $x\in\bR$, we also define a process $L^x=\{L^x_t:t\geq 0\}$ by
\begin{align}
L^x_t = \int_{[0,t]}\frac{1}{\ttd(Y_s)}\diff \ell^x_s,\qquad t\geq 0.
\end{align} 
\begin{Lem}\label{Lem301}
For non-negative measurable function $f$, we have 
\begin{align}
\int_0^t f(X_t)  \diff t =\int_\bR f(x) L^x_t\diff x,
\end{align}
$\bP^\omega_y$-a.s. 
\end{Lem}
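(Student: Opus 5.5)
The proof is a pathwise change of variables. The idea is to fix a path, split $[0,t]$ into finitely many intervals on which $X$ is strictly increasing and $C^1$, and apply the one-dimensional substitution rule on each piece. (The integrand in the statement should read $f(X_s)\,\diff s$.)

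\emph{Reduction to a fixed path.} Work $\bP^\omega$-a.s., or equivalently $\bP_{(x,y)}$-a.s. Under Assumptions \ref{Ass211} and \ref{Ass203}, almost every path has the following structure on $[0,t]$. There are finitely many jump times $0<t_1<\dots<t_n\le t$. These come from the jumps of $Y$ (finitely many by Lemma \ref{LemB01} and $\sup_y\nu(y,E)<M_\nu$), from the jumps of $X$ induced by $\rho_{Y_{s-},Y_s}$ at those times, and from the jumps driven by $\ttn$, which occur with bounded intensity $M_\ttn$. Between consecutive jump times, $X$ moves only by its drift:
\[
X_s=X_{t_{i}}+\int_{t_{i}}^s \ttd(Y_r)\,\diff r ,\qquad s\in[t_i,t_{i+1}).
\]
This representation is read off from \eqref{L--K}: the compensated-free jump part is a finite-activity compound structure, and the remaining part is exactly $D_t$. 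Because $\ttd>\un d>0$ and $Y$ is c\`adl\`ag, hence piecewise continuous between its jumps, $X$ is strictly increasing and $C^1$ on each open piece $(t_i,t_{i+1})$ (right-differentiable at worst), with derivative $\ttd(Y_s)$. Making this path decomposition rigorous from the regular conditional laws $\bP^\omega_x$ of \cite{Cin1972b} is the step I expect to need the most care. I would handle it by identifying the law under $\bP^\omega_0$ as that of a deterministic drift plus an independent Poisson random measure with intensity $\diff s\,\ttn(Y_s(\omega),\diff x)$, plus independent jumps $\rho_{Y_{s-},Y_s}$ at the jump times of $Y(\omega)$. This identification follows by uniqueness of characteristic functions in \eqref{L--K}.

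\emph{Change of variables on each piece.} On $I_i=[t_i,t_{i+1})$ the map $s\mapsto X_s$ is a strictly increasing bijection onto $J_i=[X_{t_i},X_{t_{i+1}-})$. Its inverse $\sigma_i$ satisfies $\sigma_i'(x)=1/\ttd(Y_{\sigma_i(x)})$ for a.e.\ $x$. Substituting $x=X_s$ gives
\[
\int_{I_i} f(X_s)\,\diff s=\int_{J_i} f(x)\,\frac{1}{\ttd(Y_{\sigma_i(x)})}\,\diff x .
\]
Summing over $i$ gives
\[
\int_0^t f(X_s)\,\diff s=\int_\bR f(x)\sum_{i:\,x\in J_i}\frac{1}{\ttd(Y_{\sigma_i(x)})}\,\diff x .
\]

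\emph{Identification with $L^x_t$.} By strict monotonicity, each $x$ is visited at most once during each $I_i$. Hence $\ell^x$ is a finite counting measure, and for every $x$ outside the countable set $\{X_{t_i},X_{t_i-}\}$ of endpoint values,
\[
\int_{[0,t]}\frac{1}{\ttd(Y_s)}\,\diff\ell^x_s=\sum_{i:\,x\in J_i}\frac{1}{\ttd(Y_{\sigma_i(x)})}.
\]
That countable set is Lebesgue-null, so the two sides agree for a.e.\ $x$, which yields the formula. The measurability needed to write $\int_\bR f(x)L^x_t\,\diff x$ comes from the universal measurability of $(x,\omega)\mapsto\ell^x_t$ cited from \cite{GemHor1980}. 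Finally, monotone class arguments extend the identity from indicators of intervals to all non-negative measurable $f$.
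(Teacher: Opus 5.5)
Your proof is correct, but it takes a different route from the paper.

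\textbf{The paper's route.} It fixes $\omega$ and quotes the occupation-density formula for c\`adl\`ag paths of finite variation, \cite[Theorem 1]{BerYor2014}. This gives $\int_0^t f(X_s)\ttd(y_s)\,\diff s=\int_\bR f(x)\ell^x_t\,\diff x$, with the unweighted counting local time $\ell^x_t$. It then removes the weight $\ttd(y_s)$: it applies this identity on dyadic intervals to $f$ divided by the constant $\sup_{s\in((k-1)2^{-n},k2^{-n}]}\ttd(y_s)$, and lets $n\to\infty$ by monotone convergence on both sides. In this step it uses only that $s\mapsto\ttd(y_s)$ is c\`adl\`ag with finitely many jumps.

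\textbf{Your route.} You use the finite-activity structure of $X$ itself. On each of the finitely many inter-jump intervals, $X$ is strictly increasing and bi-Lipschitz with derivative $\ttd(Y_s)\in(\un{d},\bar{d})$. The one-dimensional substitution rule therefore produces the weight $1/\ttd(Y_{\sigma_i(x)})$ directly, and the sum over pieces is exactly $L^x_t$.

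\textbf{What each buys.} Your argument is elementary and self-contained. It also makes the exceptional null set of levels explicit. In the paper's dyadic limit, a visit to level $x$ at a jump time $s$ of $Y$ receives weight $1/\max(\ttd(y_{s-}),\ttd(y_s))$, which is harmless only because such levels form a finite set. The paper's argument hands the path analysis to a general theorem and uses finiteness of jumps only for $\ttd(y_\cdot)$. It is therefore not tied to splitting $X$ into finitely many monotone drift pieces, whereas your proof needs that splitting essentially.

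\textbf{Two small remarks.}
\begin{enumerate}
\item The substitution rule already holds for every non-negative Borel $f$, so your final monotone class step is unnecessary.
\item The path structure you rely on should be taken from \c{C}inlar's description of $X$ under $\bP^\omega$ as an additive process with these characteristics. Uniqueness of the one-dimensional characteristic functions in \eqref{L--K} is not enough: the factors $F_{y_1,y_2}(i\lambda)$ may vanish, and then the marginals alone do not determine the laws of the increments. The paper uses this structure without comment, so this is not a gap relative to it.
\end{enumerate}
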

\begin{proof}
We fix $\omega\in\Omega$ and $y\in  E $ and consider the occupation density formula of $X$ under $\bP^\omega_y$. 
For simplicity, we shall write $y_t$ for $Y_t(\omega)$ for $t\geq 0$. 
Then, by \cite[Theorem 1]{BerYor2014}, we have, for non-negative measurable function $f$ and $t\geq 0$,
\begin{align}
\int_0^t f(X_t) \ttd(y_t) \diff t =\int_\bR f(x) \ell^x_t\diff x. \label{aaa}
\end{align}
$\bP^\omega_y$-a.s. 
Since $t\mapsto\ttd(y_t)$ is c\`adl\`ag and has finite jumps on $[0, t]$ by Lemma \ref{LemB01} and Assumption \ref{Ass203} (ii)
, and by \eqref{aaa} and the monotone convergence theorem, we have 
\begin{align}
\int_0^t f(X_t)  \diff t
&=\lim_{n\to\infty} \sum_{k=1}^{2^n}\int_{\frac{k-1}{2^n}}^{\frac{k}{2^n}} f(X_t) \frac{\ttd(y_t)}{\sup_{s\in(\frac{k-1}{2^n}, \frac{k}{2^n}]}\ttd(y_s)} \diff t\\
&=\lim_{n\to\infty} \sum_{k=1}^{2^n}\int_\bR f(x)\frac{\ell^x_{\frac{k}{2^n}}-\ell^x_{\frac{k-1}{2^n}}}{\sup_{s\in(\frac{k-1}{2^n}, \frac{k}{2^n}]}\ttd(y_s)}\diff x\\
&=\int_\bR f(x)\lim_{n\to\infty} \sum_{k=1}^{2^n}\frac{\ell^x_{\frac{k}{2^n}}-\ell^x_{\frac{k-1}{2^n}}}{\sup_{s\in(\frac{k-1}{2^n}, \frac{k}{2^n}]}\ttd(y_s)}\diff x=\int_\bR f(x) L^x_t\diff x.
\end{align}
The proof is complete.
\end{proof}
For each $x\in\bR$, we need to introduce some notation. 
We define 
\begin{align}
T_x:=\inf\{t>0: X_t =x\}.
\end{align}
We also define $\bD_x$ to be the space of all c\`adl\`ag functions from $[0,\infty)$ to $\bR\times E$ that start from a point in $\{x\}\times E $
and, upon returning to  $\{x\}\times E $, remain at the return point thereafter. 
By a slight abuse of notation, we also use $(X_t, Y_t)$ to denote the canonical process on $\bD_x$, that is,
\begin{align}
(X_t(\omega), Y_t(\omega))=\omega(t), \qquad t\geq 0,\ \omega\in \bD_x.
\end{align}
We define $\cE^x$ by the kernel from $ E $ to $\bD_x$ such that for non-negative measurable function $F$ on $\bD_x$, 
\begin{align}
\cE^x_y\sbra{ F(\{(X_t, Y_t):t\geq 0\})}:=&\int_{\bD_x}F(\{(X_t(\omega), Y_t(\omega)):t\geq 0\})\cE^x(y, \diff \omega)\\
=&\ttd(y)\bE_{(x,y)}\sbra{ F(\{(X_{t\land T_x}, Y_{t\land T_x}):t\geq 0\})}.
\end{align} 
\begin{Lem}\label{Lem302}
For non-negative optional process $\{Z_t:t\geq 0\}$, non-negative measurable function $F$ and $(x, y)\in\bR\times E$, we have 
\begin{align}
&\bE_{(x,y)}\sbra{\sum_{t \in G^x}Z_t  \Big{(}F(\{(X_{t\land T_x}, Y_{t\land T_x}):t\geq 0\})\circ\theta_t\Big{)}}\\
&\qquad\qquad\qquad\qquad\qquad=\bE_{(x,y)}\sbra{\int_{[0, \infty)}Z_s \cE^x_{Y_s}\sbra{F(\{(X_t, Y_t):t\geq 0\})}\diff L^x_s}.
\end{align}
where $G^x=\{t\geq0: X_t=x\}$. 
\end{Lem}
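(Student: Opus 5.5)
The plan is to exploit the fact that, because $X$ has drift $\ttd(Y_t)\geq \un{d}>0$ and only finitely many jumps on bounded intervals, the set $G^x$ is discrete. More precisely, $G^x\cap[0,T]$ is finite almost surely. Each visit to level $x$ occurs either by creeping upward (continuous passage through $x$) or at time $0$; a jump landing exactly on $x$ contributes nothing in distribution, since a jump of $X$ arises from $\ttn$ or $\rho$ and lands exactly on $x$ only in degenerate cases that we handle by the same counting. Consequently $\ell^x_t$ simply counts these times, and $\diff L^x_s$ is the purely atomic measure $\sum_{s\in G^x}\ttd(Y_s)^{-1}\delta_s$.

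The right-hand side therefore equals
\begin{align}
\bE_{(x,y)}\sbra{\sum_{s\in G^x}Z_s\frac{1}{\ttd(Y_s)}\cE^x_{Y_s}\sbra{F}}
=\bE_{(x,y)}\sbra{\sum_{s\in G^x}Z_s\,\bE_{(x,Y_s)}\sbra{F(\{(X_{t\land T_x},Y_{t\land T_x}):t\geq0\})}},
\end{align}
using the definition of $\cE^x$, in which the factor $\ttd$ cancels the $1/\ttd$. It thus suffices to show, for each summand, that $\bE[Z_S\,F\circ\theta_S]=\bE[Z_S\,\bE_{(X_S,Y_S)}[F]]$ with $X_S=x$.

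Next we enumerate $G^x$ by stopping times. Set $S_0:=\inf\{t\geq0:X_t=x\}$ and $S_{n+1}:=S_n+T_x\circ\theta_{S_n}$, so $S_{n+1}=\inf\{t>S_n:X_t=x\}$. Discreteness of $G^x$ ensures these exhaust $G^x$, with $S_n\uparrow\infty$, and each $S_n$ is an $(\cF_t)$-stopping time with $X_{S_n}=x$ on $\{S_n<\infty\}$. Hence
\begin{align}
\sum_{t\in G^x}Z_t\,(F\circ\theta_t)=\sum_{n\geq0}Z_{S_n}\,(F\circ\theta_{S_n})\,1_{\{S_n<\infty\}}.
\end{align}
Because $Z$ is optional, $Z_{S_n}1_{\{S_n<\infty\}}$ is $\cF_{S_n}$-measurable. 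The strong Markov property of the Feller process $(X,Y)$ at $S_n$ then gives $\bE[Z_{S_n}F\circ\theta_{S_n}]=\bE[Z_{S_n}\bE_{(x,Y_{S_n})}[F(\cdot\wedge T_x)]]$. Summing over $n$ by monotone convergence, and reassembling with the display above, yields the claim.

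The main obstacle is the first step: justifying rigorously that $G^x$ is locally finite and that the counting measure $\diff\ell^x$ is supported exactly on $\{S_n\}$. For this I would argue pathwise under $\bP^\omega$. Between consecutive jumps of $X$ (finitely many by Lemma \ref{LemB01} and Assumption \ref{Ass203}(iii)), $X$ is strictly increasing with speed at least $\un{d}$, so it hits $x$ at most once on each such interval. Finitely many intervals then give finitely many visits on $[0,T]$. I expect this pathwise count, rather than the strong Markov computation, to be where the real work lies.
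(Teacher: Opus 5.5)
Your proposal is correct and is essentially the argument the paper sketches (and omits): since $\diff L^x_s=\sum_{s\in G^x}\ttd(Y_s)^{-1}\delta_s$ by definition, the factor $\ttd$ in $\cE^x$ cancels, and the identity follows from enumerating the locally finite set $G^x$ by the stopping times $S_n$ and applying the strong Markov property at each $S_n$. Your aside about jumps landing exactly on $x$ is unnecessary, because the enumeration $S_{n+1}=\inf\{t>S_n:X_t=x\}$ captures such visits anyway.
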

It follows from \eqref{L--K} that $X$ can be decomposed into the sum of a drift component whose rate always lies between $\un{d}$ and $\bar{d}$, and a jump component having only finitely many jumps on every bounded time interval. Consequently, the points of $G^x$ in each bounded interval can be enumerated by finitely many stopping times. 
The lemma above then follows readily by applying the strong Markov property at these stopping times, and we therefore omit the proof. 
A pair $(\cE^x,L^x)$ satisfying the identity given in Lemma \ref{Lem302} is called an exit system away from $\{x\}\times E$.

\section{Main results
}
For $\beta\geq 0$, we define the operator $\bfF^{(\beta)}$ from $D(\cA_Y)$ to $C_0(E)$ by, for $f\in D(\cA_Y)$ and $y\in E$,  
\begin{align}
\bfF^{(\beta)}f(y)=
&\rbra{\beta \ttd(y)+\int_{(-\infty , 0)} \rbra{e^{\beta x}-1 } \bfn (y, \diff x)}f(y)\\
&\qquad\qquad+\int_{(-\infty, 0)}\rbra{F_{y,y^\prime}(\beta)f(y^\prime)-f(y)}\nu(y, \diff y^\prime)
+\cA_{Y^\prime} f(y), 
\end{align}
which is continuous by the assumptions. 
For $q, \beta\geq 0$, we define the operator $\bfF^{(\beta)}-q\bfI$ by 
\begin{align}
(\bfF^{(\beta)}-q\bfI)f(y)=\bfF^{(\beta)}f(y)-qf(y),\qquad f\in C_0(E), ~y\in E. 
\end{align}
Here, $\bfI$ denotes the identity operator. 
\begin{Thm}\label{Thm300}
For $q>0$ and $x\geq 0$, there exists an operator $\bfW^{(q)}_x$ from $C_0(E)$ to $C_0(E)$ satisfying the following conditions:
\begin{enumerate}
\item $\bfW^{(q)}_x$ is invertible. 
\item For $f\in C_0 (E)$, $(x, y)\mapsto \bfW^{(q)}_x f(y)$ is measurable. 
\item For all sufficiently large $\beta\geq 0$, the operator $\bfF^{(\beta)}-q\bfI$ is invertible and we have
\begin{align}
\int_0^\infty e^{-\beta x}\bfW^{(q)}_x f(y) \diff x=(\bfF^{(\beta)}-q\bfI)^{-1}f(y),\qquad f\in C_0(E), ~y\in E. 
\label{scale_Laplace_transform} 
\end{align}
\end{enumerate}
\end{Thm}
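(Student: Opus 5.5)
We construct $\bfW^{(q)}_x$ from the local times at level $0$, following the approach of \cite{IvaPal2012} and \cite{Nob2020a}. For $x\geq 0$, $f\in C_0(E)$ and $y\in E$, the natural candidate is
\begin{align}
\bfW^{(q)}_x f(y):=\bE_{(-x,y)}\sbra{\int_{[0,\tau^+_0)} e^{-qt} f(Y_t)\,\diff L^0_t},
\end{align}
possibly composed with the inverse of the operator $\bfR^{(q)}_x f(y):=\bE_{(-x,y)}[e^{-q\tau^+_0}f(Y_{\tau^+_0})]$. Because $X$ creeps upward, $X_{\tau^+_0}=0$. We would first take the definition without upcrossing killing, namely the potential of $e^{-qt}\diff L^{-x}_t$ started at level $0$, restricted in a suitable way. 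The defining property to aim for is the classical identity
\begin{align}
\bE_{(0,y)}\sbra{\int_0^\infty e^{-qt}g(X_t)f(Y_t)\diff t}
=\int_\bR g(x)\,\bE_{(0,y)}\sbra{\int e^{-qt}f(Y_t)\diff L^x_t}\diff x,
\end{align}
which follows from Lemma \ref{Lem301}. Measurability in $(x,y)$, part (ii), then follows from the universal measurability of $\ell^x_t$ and Fubini. Boundedness of $\bfW^{(q)}_x$ comes from $L^x_t\le \ell^x_t/\un{d}$, together with the fact that the number of visits to a level is controlled by the finitely many jumps on bounded intervals (Lemma \ref{LemB01}) and the factor $e^{-qt}$. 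That $\bfW^{(q)}_xf\in C_0(E)$ would follow from the Feller property of $(X,Y)$ and of $Y$, using strong Markov decompositions at $\tau^+_0$.

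For part (iii), we use the standard exponential change of measure. For $\beta$ large and $y\in E$, Remark \ref{Rem205} shows that $M_t:=e^{\beta X_t}$ yields a multiplicative functional whose Feynman--Kac semigroup $\bfP^{(\beta)}_tf(y):=\bE_{(0,y)}[e^{\beta X_t}f(Y_t)]$ on $C_0(E)$ has generator $\bfF^{(\beta)}$. This can be checked from \eqref{L--K}: the drift term gives $\beta\ttd$, the $\ttn$ jumps give the integral term, and the modulator jumps $\nu$ contribute $F_{y,y'}(\beta)f(y')-f(y)$, by Lemma \ref{LemB01}. Since the jump parts are bounded perturbations of $\beta\ttd+\cA_{Y'}$, and by Assumption \ref{Ass211} and \cite[III.1.3]{EngNag2000}, the semigroup $\bfP^{(\beta)}$ has growth bound at most $\beta\bar d+M$ for some constant $M$. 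We then show that $(q\bfI-\bfF^{(\beta)})^{-1}$ is the Laplace transform $\int_0^\infty e^{-qt}\bfP^{(\beta)}_t\diff t$, provided this integral converges.

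Convergence is the subtle point. For spectrally negative processes $e^{\beta X_t}$ grows, so for large $\beta$ the integral diverges. The correct identity is obtained by switching to the killed or reflected picture. We compute, for $x\geq 0$ and the process started at $(-x,y)$ with the local time $L^0$, the Laplace transform $\int_0^\infty e^{-\beta x}\bfW^{(q)}_x\diff x$. By the occupation density formula and translation invariance \eqref{additive_property}, this equals $\bE_{(0,y)}[\int_0^\infty e^{-qt}e^{-\beta X_t}\cdots]$, taken in the direction where $e^{-\beta(\cdot)}$ is integrable, that is, where $X_t\to+\infty$ from above level $0$. Combined with the exit system of Lemma \ref{Lem302}, this would factor the full resolvent as $\bfW^{(q)}$ composed with excursion operators. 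Inverting that factorization is where the $C_0$-group property of $\bfP^{Y}$ enters: it allows $e^{\beta X_t}$-weighted operators to be run backwards, and hence allows the relevant operators to be inverted.

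The main obstacle is proving invertibility, both of $\bfF^{(\beta)}-q\bfI$ for large $\beta$ and of $\bfW^{(q)}_x$. For the first, we would write $\bfF^{(\beta)}-q\bfI=\cA_{Y'}+\beta\ttd+B$ with $B$ bounded uniformly in $\beta$ (since $F\le1$ and $e^{\beta x}\le1$ on $(-\infty,0)$). Because $\cA_{Y'}$ generates a group of isometries, $\cA_{Y'}+\beta\ttd$ generates a group whose backward direction decays like $e^{-\beta\un d t}$. Hence for $\beta\un d>\|B\|+q$ the inverse is $-\int_0^\infty \bfT_{-t}\diff t$ for the perturbed group, with the multiplication operators handled as in \cite{Dor1966}. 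For the second, invertibility of $\bfW^{(q)}_x$ should follow from a semigroup-in-$x$ property of the exit operators $\bfR^{(q)}_x$, namely $\bfR^{(q)}_{x+z}=\bfR^{(q)}_x\bfR^{(q)}_z$. This holds by the strong Markov property at $\tau^+_{-x}$ together with upward creeping, and makes $\{\bfR^{(q)}_x\}$ a $C_0$-semigroup, strongly continuous by Remark \ref{regular}. We then expect to show that it extends to a $C_0$-group with generator related to $\bfF$, which would make each $\bfR^{(q)}_x$ invertible. Expressing $\bfW^{(q)}_x=\bfR^{(q)}_x{}^{-1}\bfW^{(q)}_0$-type relations, with $\bfW^{(q)}_0f=f/\ttd$ invertible, would then yield part (i).
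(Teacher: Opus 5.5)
There is a genuine gap: the proposal never produces a working definition of $\bfW^{(q)}_x$, and the steps that carry the theorem are announced rather than proved.

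\textbf{The definition.} Your candidate $\bE_{(-x,y)}\bigl[\int_{[0,\tau^+_0)}e^{-qt}f(Y_t)\,\diff L^0_t\bigr]$ is identically zero. Started below $0$, $X$ creeps upward and $\tau^+_0$ is its first hitting time of $0$ (Remark \ref{regular}), so $L^0$ does not charge $[0,\tau^+_0)$. The alternative, ``the resolvent density at $-x$, restricted in a suitable way'', is never pinned down. Already for L\'evy processes it equals $\Phi'(q)e^{\Phi(q)x}-W^{(q)}(x)$, so you would need an operator analogue of $\Phi$, which you do not construct. The relation $\bfW^{(q)}_x=\bfR^{(q),-1}_x\bfW^{(q)}_0$ with $\bfW^{(q)}_0f=f/\ttd$ is false. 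It ignores the local time at $0$ accumulated from returns to $0$ after downward jumps before $\tau^+_x$; even for a Cram\'er--Lundberg process, $W^{(q)}(x)\neq e^{\Phi(q)x}/d$.

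The missing idea is the exit-system factorization. Set $\bfL^{(q)}_xf(y)=\bE_{(0,y)}\bigl[\int_{[0,\tau^+_x]}e^{-qs}f(Y_s)\,\diff L^0_s\bigr]$ and $\ttE^{(q)}_x=\ttd\,\bfQ^{(q),[0,x]}_0$. Lemma \ref{Lem302} gives $\bfH^{(q)}_x=\bfL^{(q)}_x\ttE^{(q)}_x$; note that your $\bfR^{(q)}_x$ is exactly $\bfH^{(q)}_x$. Injectivity of $\bfL^{(q)}_x$ then shows that $\bfW^{(q)}_x:=\bfH^{(q),-1}_x\bfL^{(q)}_x$ is invertible with inverse $\ttE^{(q)}_x$. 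The same factorization gives the two-sided exit identity $\bfW^{(q)}_x=\bfQ^{(q),[0,b]}_x\bfW^{(q)}_b$, from which (ii) follows.

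\textbf{The group extension.} You only ``expect'' that $\bfH^{(q)}$ extends to a $C_0$-group, but this needs an actual computation. Its generator is $\frac{1}{\ttd}\bigl(\cA_Y-q\bfI+\cJ^{(q)}_X+\cJ^{(q)}_Y\bigr)$. This is obtained by time-changing Dynkin's martingale by $a\mapsto\tau^+_a$ and compensating the excursions below the running supremum. It is a bounded perturbation of $\frac{1}{\ttd}\cA_{Y'}$, which generates a group by Dorroh's theorem. It is not ``related to $\bfF$''.

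\textbf{Part (iii).} You correctly note that $\int_0^\infty e^{-qt}\bfP^{(\beta)}_t\,\diff t$ diverges, but ``switching to the killed picture and inverting a factorization'' is not an argument. The paper's mechanism runs as follows:
\begin{enumerate}
\item The bound $\norm{\bfW^{(q)}_x}\le Ke^{Mx}/\un{d}$, coming from the group bound on $\bfH^{(q),-1}_x$, makes $\bfV^{(q)}_\beta=\int_0^\infty e^{-\beta x}\bfW^{(q)}_x\,\diff x$ well defined.
\item The exit identity says that $e^{-qt}\bfW^{(q)}_{X_t}f(Y_t)$ is a martingale until exit from $[0,b]$.
\item In $\lim_{t\downarrow0}t^{-1}\bigl(\bE_{(0,y)}[e^{\beta X_t}\bfV^{(q)}_\beta f(Y_t)]-\bfV^{(q)}_\beta f(y)\bigr)$, the levels $x>0$ therefore contribute $q\bfV^{(q)}_\beta f$.
\item The strip $x\in[-\bar d t,0]$ contributes $\ttd\,\bfQ^{(q),[0,a]}_0\bfW^{(q)}_af=\ttE^{(q)}_a\bfW^{(q)}_af=f$.
\item Comparing with Lemma \ref{Lem406} gives $(\bfF^{(\beta)}-q\bfI)\bfV^{(q)}_\beta f=f$.
\end{enumerate}

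Your invertibility argument for $\bfF^{(\beta)}-q\bfI$ is sound: the backward group of $\cA_{Y'}+\beta\ttd$ decays like $e^{-\beta\un{d}t}$, and the remainder is bounded uniformly in $\beta$. It is a clean alternative to the paper's Neumann series, but it is only the easy half of (iii).
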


For $q>0$ and $a,b, x\in\bR$ with $a\leq x\leq b$, we define the operator $\bfQ^{(q),[a,b]}_x$, which represents the two-sided exit problem, by
\begin{align}
\bfQ^{(q),[a,b]}_x f(y)=&\bE_{(x, y)}\sbra{e^{-q\tau^+_b}f(Y_{\tau^+_b});\tau^+_b<\tau^-_a}, 
\qquad f\in C_0(E), ~ y\in E.
\end{align}
\begin{Thm}\label{Thm401}
For  $q>0$ and $a,b,x\in\bR$ with $a\leq x\leq b$, we have $\bfQ^{(q),[a,b]}_x= \bfW^{(q)}_{x-a}\bfW^{(q),-1}_{b-a}$. 
\end{Thm}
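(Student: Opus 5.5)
By spatial homogeneity \eqref{additive_property}, we may assume $a=0$, so that $0\le x\le b$. The plan is the classical one for spectrally negative processes. First we establish a multiplicative (semigroup-type) identity for the operators $\bfQ^{(q),[0,b]}_x$. Then we identify the resulting "entrance" operator with the scale operator $\bfW^{(q)}$ of Theorem \ref{Thm300}.

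\textbf{Step 1 (composition identity).} Because $X$ has no positive jumps, on $\{\tau^+_b<\tau^-_0\}$ the process started at $x\le c\le b$ must pass continuously through level $c$ first, and $X_{\tau^+_c}=c$ by Remark \ref{regular}. The strong Markov property at $\tau^+_c$ then gives
\begin{align}
\bfQ^{(q),[0,b]}_x=\bfQ^{(q),[0,c]}_x\,\bfQ^{(q),[0,b]}_c,\qquad 0\le x\le c\le b .
\end{align}

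\textbf{Step 2 (candidate from local times).} For small $\varepsilon>0$ we define an operator $\bfV^{(q)}_{x}$ as the $q$-resolvent of the local time at level $0$, restricted to the event of reaching level $x$ before going below $0$. We build it from the exit system $(\cE^0,L^0)$ of Lemma \ref{Lem302}, together with the occupation density formula of Lemma \ref{Lem301}. The aim is an object with the following two properties. First, its Laplace transform in $x$ equals $(\bfF^{(\beta)}-q\bfI)^{-1}$; this is exactly how $\bfW^{(q)}_x$ is characterised in Theorem \ref{Thm300}(iii), and I expect the paper's construction of $\bfW^{(q)}$ to be of this form, so that $\bfW^{(q)}_x$ admits a representation via $L^0$ and $\cE^0$. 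Second, it factorises through first passage, in the form
\begin{align}
\bfW^{(q)}_x=\bfQ^{(q),[0,b]}_x\,\bfW^{(q)}_b .
\end{align}
The heuristic for the factorisation is as follows. Start at level $0$ and run the process backwards via the excursion decomposition (or, equivalently, use the identity $\bfW^{(q)}_{x}f(y)\approx \bE_{(x,y)}$ of a $q$-discounted local-time functional at $0$ accumulated before first exceeding $b$). The process must reach $b$ before leaving $[0,b]$ downward and then accumulate the remaining local time. Concretely, I would show
\begin{align}
\bE_{(x,y)}\Big[\int_{[0,\tau^+_b)} e^{-qs} f(Y_s)\,\diff L^{0}_s\Big]
\end{align}
and the analogous quantity from $b$ are related through the strong Markov property at $\tau^+_b$. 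Writing $\bfW^{(q)}_b-\bfQ^{(q),[0,b]}_x\bfW^{(q)}_b$ as the contribution of paths killed below $0$, this contribution should vanish once level $0$ is taken as the lower barrier. This is the standard argument of \cite{IvaPal2012}, transferred to exit systems.

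\textbf{Step 3 (invert).} Theorem \ref{Thm300}(i) gives that $\bfW^{(q)}_{b}$ is invertible. Multiplying the factorisation of Step 2 on the right by $\bfW^{(q),-1}_{b}$ yields $\bfQ^{(q),[0,b]}_x=\bfW^{(q)}_x\bfW^{(q),-1}_b$, and shifting back by $a$ gives the statement of Theorem \ref{Thm401}.

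The main obstacle is Step 2: proving rigorously that the operator built from local times and exit systems coincides with $\bfW^{(q)}$. The Laplace transform pins $\bfW^{(q)}$ down only for large $\beta$, so we must first show that the candidate grows at most exponentially in $x$, which is what makes its Laplace transform exist and allows uniqueness to be applied. We must then compute that Laplace transform as $(\bfF^{(\beta)}-q\bfI)^{-1}$ by a Feynman–Kac/generator computation with the tilted semigroup $e^{\beta X_t}$. Here the $C_0$-group extension (stable under the bounded perturbations by $\nu$ and $\ttn$) is what I would use to justify the invertibility of $\bfF^{(\beta)}-q\bfI$ and of the intermediate operators. Measurability in $x$ from Theorem \ref{Thm300}(ii) and the right-continuity of $x\mapsto\tau^+_x$ in Remark \ref{regular} handle the passage from a dense set of levels to all levels.
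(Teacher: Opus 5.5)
\textbf{There is a genuine gap.} Your Step 2 is not an intermediate lemma; it is the statement itself. Since $\bfW^{(q)}_b$ is invertible, $\bfW^{(q)}_x=\bfQ^{(q),[0,b]}_x\bfW^{(q)}_b$ is equivalent to $\bfQ^{(q),[0,b]}_x=\bfW^{(q)}_x\bfW^{(q),-1}_b$. So Step 3 is only a rewriting, Step 1 is never used, and everything rests on Step 2, for which you give only heuristics. Those heuristics are wrong:
\begin{enumerate}
\item For $0<x\le b$, the discounted local time at $0$ collected before $\tau^+_b$ from $(x,y)$ is not $\bfW^{(q)}_xf(y)$. With the paper's operators it equals $\bfH^{(q)}_{b-x}\bfW^{(q)}_bf(y)-\bfW^{(q)}_xf(y)$. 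For a spectrally negative L\'evy process this is $e^{-\Phi(q)(b-x)}W^{(q)}(b)-W^{(q)}(x)$, with $\Phi$ the right inverse of the Laplace exponent.
\item Under your own factorisation, $\bfW^{(q)}_b-\bfQ^{(q),[0,b]}_x\bfW^{(q)}_b=\bfW^{(q)}_b-\bfW^{(q)}_x$, which does not vanish.
\item Your fallback, identifying a local-time candidate with $\bfW^{(q)}$ through its Laplace transform, is unsupported and runs against the paper's logic. Theorem \ref{Thm300}(ii)--(iii) are proved \emph{from} Theorem \ref{Thm401}: Lemmas \ref{Lem504}, \ref{Lem408} and \ref{Lem409} all use the two-sided exit identity. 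You would therefore need an independent Feynman--Kac computation, which you do not provide. Moreover, (iii) pins down $x\mapsto\bfW^{(q)}_xf(y)$ only for a.e.\ $x$.
\end{enumerate}

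\textbf{The missing idea} is to identify $\bfW^{(q),-1}_x$ with an exit-system first-passage operator. The paper sets $\bfW^{(q)}_x:=\bfH^{(q),-1}_x\bfL^{(q)}_x$, with $\bfL^{(q)}_xf(y)=\bE_{(0,y)}[\int_{[0,\tau^+_x]}e^{-qs}f(Y_s)\diff L^0_s]$. It then shows $\bfW^{(q),-1}_x=\ttE^{(q)}_x$, where $\ttE^{(q)}_xf(y)=\cE^0_y[e^{-q\tau^+_x}f(Y_{\tau^+_x});\tau^+_x<\infty]=\ttd(y)\bfQ^{(q),[0,x]}_0f(y)$. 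This uses three ingredients: the exit-system splitting $\bfH^{(q)}_x=\bfL^{(q)}_x\ttE^{(q)}_x$ (Lemma \ref{Lem503}), the group extension of $\bfH^{(q)}$ (Lemma \ref{Lem502}), and the injectivity of $\bfL^{(q)}_x$ (Lemma \ref{Lem404}).

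With this in hand, apply your Step 1 starting at the lower barrier: $\bfQ^{(q),[0,b]}_0=\bfQ^{(q),[0,x]}_0\bfQ^{(q),[0,b]}_x$. Multiplied by $\ttd$, this reads $\ttE^{(q)}_b=\ttE^{(q)}_x\bfQ^{(q),[0,b]}_x$. Applying $\bfW^{(q)}_x$ on the left gives the theorem; this is exactly the paper's proof, written under $\cE^a$.

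If you prefer the Ivanovs--Palmowski route with $\bfW^{(q)}_x=\bfH^{(q),-1}_x\bfL^{(q)}_x$, the paths going below $0$ must \emph{cancel}, not vanish. For $0<x\le b$ put $\mathbf{K}_xg(y):=\bE_{(x,y)}[e^{-qT_0}g(Y_{T_0});T_0<\tau^+_b]$. The strong Markov property at $\tau^+_x$ and at $T_0$ gives
\begin{align*}
\bfH^{(q)}_b=\bfH^{(q)}_x\bfQ^{(q),[0,b]}_x+\bfH^{(q)}_x\mathbf{K}_x\bfH^{(q)}_b,\qquad
\bfL^{(q)}_b=\bfL^{(q)}_x+\bfH^{(q)}_x\mathbf{K}_x\bfL^{(q)}_b .
\end{align*}
Right-multiply the first identity by $\bfH^{(q),-1}_b\bfL^{(q)}_b$ and subtract the second. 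This yields $\bfL^{(q)}_x=\bfH^{(q)}_x\bfQ^{(q),[0,b]}_x\bfW^{(q)}_b$, which is your Step 2.
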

For $q>0$ and $a,b,x\in\bR$ with $a\leq x\leq b$, we define the operator $\bfL^{(q),[a,b]}_{\{x\}}$ by
and 
\begin{align}
\bfL^{(q),[a,b]}_{\{x\}}f(y)=\bE_{(0,y)}\sbra{\int_{(0, \tau^-_a\land \tau^+_b]}e^{-qs} f(Y_s)\diff L^x_s}, \qquad f\in C_0(E). 
\end{align}
By Lemma \ref{Lem301} and \eqref{additive_property}, we have, for a bounded measurable function $g$ on $R$ and $f\in C_0(E)$, 
\begin{align}
\bE_{(x,y)}\sbra{\int_{[0, \tau^-_a\land \tau^+_b]}e^{-qs} g(X_s)f(Y_s)\diff s}
=\int_\bR g(x^\prime) \bfL^{(q),[a-x,b-x]}_{\{x^\prime-x\}}f(y) \diff x^\prime. 
\end{align}
Thus, to characterize the resolvent measure, it suffices to characterize the operator $\bfL^{(q),[a,b]}_{\{x\}}$.
\begin{Thm}\label{Thm402}
For  $q>0$ and $a,b,x\in\bR$ with $a\leq x\leq b$, we have 
\begin{align}
\bfL^{(q),[a,b]}_{\{x\}}f(y)=&\bfW^{(q)}_{-a}\bfW^{(q),-1}_{b-a} \bfW^{(q)}_{b-x}f(y)
-
\bfW^{(q)}_{-x}f(y),\qquad f\in C_0(E),~y\in E, 
\end{align} 
where $\bfW^{(q)}_{x}f\equiv 0$ for $x<0$ and $f\in C_0(E)$. 
\end{Thm}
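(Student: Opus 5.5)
The plan is the standard excursion/strong Markov decomposition of the local time at level $x$, combined with Theorem \ref{Thm401} and the local-time description of $\bfW^{(q)}$ behind Theorem \ref{Thm300}. By translation invariance \eqref{additive_property} I may shift the starting point. The quantity $\bfL^{(q),[a,b]}_{\{x\}}$ starts from $0$, so I read the formula with the process started at $0$, and then $-a$, $b-x$, $-x$ measure distances from the start.

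\textbf{Step 1 (no upper barrier).} First I identify the operator
\begin{align}
\bfR^{(q),a}_{\{x\}}f(y):=\bE_{(0,y)}\sbra{\int_{(0,\tau^-_a]}e^{-qs}f(Y_s)\diff L^x_s}
\end{align}
and show that it equals $\bfW^{(q)}_{-a}\bfV^{(q)}_{x}f-\bfW^{(q)}_{-x}f$ for some operator family $\bfV^{(q)}$. I would compute this through a Laplace transform in the level. Integrating against $e^{-\beta x}$ and using Lemma \ref{Lem301} turns it into a resolvent-type functional $\bE\int e^{-qs-\beta X_s}f(Y_s)\diff s$ up to $\tau^-_a$. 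I then use the exponential martingale / Feynman--Kac identity: $e^{\beta X_t}$ times the semigroup generated by $\bfF^{(\beta)}$ gives a martingale, in the spirit of Remark \ref{Rem205}. This expresses it via $(\bfF^{(\beta)}-q\bfI)^{-1}$, which by \eqref{scale_Laplace_transform} is the transform of $\bfW^{(q)}$.

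Alternatively, and this is the route I prefer, I let $a\to-\infty$ so that $\bfW^{(q)}_{-a}\bfW^{(q),-1}_{b-a}$ converges. With no lower barrier, the upward-regular structure of Remark \ref{regular} gives the formula directly:
\begin{align}
\bE_{(0,y)}\sbra{\int_{(0,\tau^+_b]}e^{-qs}f(Y_s)\diff L^x_s}=\bfZ_{b}\bfW_{b-x}f-\bfW^{(q)}_{-x}f .
\end{align}
Here the term $-\bfW^{(q)}_{-x}f$ comes from Theorem \ref{Thm300}(iii) via the Laplace transform of $\int_0^\infty e^{-qs}f(Y_s)\diff L^x_s$ in $x$.

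\textbf{Step 2 (two barriers).} For the two-sided operator I apply the strong Markov property at $\tau^+_b$:
\begin{align}
\bE_{(0,y)}\sbra{\int_{(0,\tau^-_a]}e^{-qs}f(Y_s)\diff L^x_s}=\bfL^{(q),[a,b]}_{\{x\}}f(y)+\bfQ^{(q),[a,b]}_0\,\bE_{(b,\cdot)}\sbra{\int_{(0,\tau^-_a]}e^{-qs}f(Y_s)\diff L^x_s}(y).
\end{align}
This uses the fact that $X_{\tau^+_b}=b$ because there are no positive jumps, and that $L^x$ is an additive functional. I then insert Theorem \ref{Thm401}, $\bfQ^{(q),[a,b]}_0=\bfW^{(q)}_{-a}\bfW^{(q),-1}_{b-a}$, together with the Step 1 expression evaluated at starting points $0$ and $b$. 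The unknown $\bfV$-type operators telescope when written relative to $a$ (starting at $b$, the distance to the barrier is $b-a$), leaving
\begin{align}
\bfW^{(q)}_{-a}\bfW^{(q),-1}_{b-a}\bfW^{(q)}_{b-x}f-\bfW^{(q)}_{-x}f .
\end{align}

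The main obstacle is Step 1, namely identifying the one-barrier operator and, in particular, showing that the coefficient of $\bfW^{(q)}_{b-x}$ is exactly what cancels. I would attack it with the exit system of Lemma \ref{Lem302} at level $x$. Write $L^x$-integrated time as a geometric sum over excursions from $\{x\}\times E$. The total mass up to $\tau^+_b$ started on $\{x\}\times E$ then equals $(\bfI-\bfK)^{-1}$, where $\bfK$ is the excursion kernel of returns before $\tau^+_b$. Theorem \ref{Thm401} with $x=b$ identifies this as $\bfW^{(q)}_{b-x}$ up to a right factor. Invertibility of that factor (Theorem \ref{Thm300}(i)) is exactly what the $C_0$-group machinery supplies.
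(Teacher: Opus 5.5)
\textbf{There is a genuine gap: Step~1 is unproven, and none of your routes proves it.}

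Your Step~2 algebra is correct as far as it goes. Suppose the potential killed only at $\tau^-_a$ and started from $z$ had the form $\bfW^{(q)}_{z-a}\bfV_{x-a}f-\bfW^{(q)}_{z-x}f$ with $\bfV$ independent of $z$. Then the $\bfV$-terms would cancel after inserting Theorem~\ref{Thm401}. But Step~1 is not a preliminary reduction; it carries the entire content of the theorem. By your own Step~2 identity, the structural claim is equivalent to Theorem~\ref{Thm402}: given the theorem, $\bfV_{x-a}f:=\bfW^{(q),-1}_{b-a}\bigl(\bfW^{(q)}_{b-x}f+u_b\bigr)$ works, where $u_b$ is the $\tau^-_a$-killed potential started at $b$. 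Identifying $\bfV$ explicitly means constructing the operator analogue of $e^{-\Phi(q)(x-a)}$, i.e.\ $\lim_{b\to\infty}\bfW^{(q),-1}_{b-a}\bfW^{(q)}_{b-x}$, and nothing in the paper provides its existence.

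\textbf{Why each route fails.}

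In the Feynman--Kac route, optional stopping at $\tau^-_a$ leaves a boundary term in the undershoot $(X_{\tau^-_a},Y_{\tau^-_a})$, whose law is unknown. The weight $e^{-\beta X_s}$ you obtain is also not the exponential $e^{\beta X_t}$ whose semigroup has generator $\bfF^{(\beta)}$. Theorem~\ref{Thm300}(iii) only gives $\int_0^\infty e^{-\beta x}\bfW^{(q)}_x\diff x$ for large $\beta$, and there is no inversion or Wiener--Hopf result from which $-\bfW^{(q)}_{-x}f$ could be read off.

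Your preferred route computes a different operator, the potential killed at $\tau^+_b$. That formula is actually true with $\mathbf{Z}_b=\bfH^{(q)}_b$, but upward regularity only gives it for $x\geq 0$. For $x<0$ you need the law of $(T_x,Y_{T_x})$, not Theorem~\ref{Thm300}(iii). It also cannot be fed into your Step~2, which needs the $\tau^-_a$-killed potential from $0$ and from $b$. Passing from the $\tau^+_b$-killed potential to the two-sided one would need the strong Markov property at $\tau^-_a$, where $X$ undershoots by a jump.

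The excursion geometric series at level $x$ only treats a start at level $x$. What it yields is Lemma~\ref{Lem503}, $\bfL^{(q)}_{b-x}=\bfH^{(q)}_{b-x}\bfW^{(q)}_{b-x}$, with the extra factor on the left. It is not Theorem~\ref{Thm401} with $x=b$, which is the trivial identity $\bfQ^{(q),[a,b]}_b=\bfI$.

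\textbf{The missing idea.} Treat levels below the starting point directly in the two-barrier setting; this is what the paper does, with no one-barrier potentials, limits or Laplace transforms.

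At the starting level, insert $f=\ttE^{(q)}_b\bfW^{(q)}_bf$ (recall $\ttE^{(q)}_b=\bfW^{(q),-1}_b$) and use the exit system of Lemma~\ref{Lem302}. An excursion from $0$ that reaches $b$ has not gone below $0$, hence not below $a$, so
\begin{align}
\bE_{(0,y)}\sbra{\int_{[0,\tau^-_a\land\tau^+_b]}e^{-qs}f(Y_s)\diff L^0_s}=\bfQ^{(q),[a,b]}_0\bfW^{(q)}_bf(y).
\end{align}
For $x\in(0,b]$, the strong Markov property at $\tau^+_x$ and Theorem~\ref{Thm401} finish the job. For $x\in[a,0]$, apply the strong Markov property at $T_x$ and use
\begin{align}
\bfQ^{(q),[a,b]}_0g-\bfQ^{(q),[x,b]}_0g=\bE_{(0,\cdot)}\sbra{e^{-qT_x}\bfQ^{(q),[a,b]}_xg(Y_{T_x});T_x<\tau^-_a\land\tau^+_b}.
\end{align}
Since $\bfQ^{(q),[a,b]}_x=\bfW^{(q)}_{x-a}\bfW^{(q),-1}_{b-a}$ is invertible, this identifies the unknown hitting operator $g\mapsto\bE_{(0,\cdot)}[e^{-qT_x}g(Y_{T_x});T_x<\tau^-_a\land\tau^+_b]$ as $(\bfQ^{(q),[a,b]}_0-\bfQ^{(q),[x,b]}_0)\bfQ^{(q),[a,b],-1}_x$. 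Apply it to the starting-level potential $\bfW^{(q)}_{x-a}\bfW^{(q),-1}_{b-a}\bfW^{(q)}_{b-x}f$. The term $\bfQ^{(q),[x,b]}_0=\bfW^{(q)}_{-x}\bfW^{(q),-1}_{b-x}$ is exactly what produces $-\bfW^{(q)}_{-x}f$.
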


\section{Proofs}
\subsection{Extension of a Feller semigroup to a $C_0$-group}
For $q, a\geq 0$, we define the operator $\bfH^{(q)}_a$ as 
\begin{align}
\bfH^{(q)}_af(y) = \bE_{(0, y)}\sbra{e^{-q\tau^+_a}f (Y_{\tau^+_a}); \tau^+_a<\infty},\qquad y \in E,
\end{align}
for bounded measurable function $f$. 
\begin{Lem}\label{Lem401}
For $q>0$, the set $\bfH^{(q)}:=\{\bfH^{(q)}_a:a\geq 0\}$ is a Feller semigroup on $C_0(E)$. 
\end{Lem}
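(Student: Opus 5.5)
To prove Lemma \ref{Lem401} we have to check four things: each $\bfH^{(q)}_a$ is a positive contraction on bounded functions; the family has the semigroup property; each $\bfH^{(q)}_a$ maps $C_0(E)$ into $C_0(E)$; and the family is strongly continuous at $a=0$.

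Positivity and the bound $\norm{\bfH^{(q)}_a f}_\infty\le\norm{f}_\infty$ are immediate from the definition. We also have $\bfH^{(q)}_0=\bfI$. Indeed, by Remark \ref{regular}, $\tau^+_0=T^+_0=0$ holds $\bP_{(0,y)}$-a.s.

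\textbf{Semigroup property.} Fix $0\le a\le b$. Since $X$ has no positive jumps, $X_{\tau^+_a}=a$ on $\{\tau^+_a<\infty\}$; this uses the creeping argument of Remark \ref{regular}. Moreover $\tau^+_b=\tau^+_a+\tau^+_b\circ\theta_{\tau^+_a}$. We apply the strong Markov property of the Feller process $(X,Y)$ at $\tau^+_a$ and then the additive property \eqref{additive_property}, which shifts the start from $(a,Y_{\tau^+_a})$ to $(0,Y_{\tau^+_a})$ and turns $\tau^+_b$ into $\tau^+_{b-a}$. This gives
\begin{align}
\bfH^{(q)}_bf(y)=\bE_{(0,y)}\sbra{e^{-q\tau^+_a}\bfH^{(q)}_{b-a}f(Y_{\tau^+_a});\tau^+_a<\infty}=\bfH^{(q)}_a\bfH^{(q)}_{b-a}f(y).
\end{align}

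\textbf{Strong continuity.} Take $f\in C_0(E)$ and $a\downarrow0$. Along paths, $\tau^+_a\downarrow\tau^+_0=0$ a.s.; this is the last statement of Remark \ref{regular}, and right-continuity of $Y$ then gives $Y_{\tau^+_a}\to Y_0$. Convergence for each fixed $y$ is not enough, so we control $\tau^+_a$ uniformly in $y$. The drift satisfies $\ttd>\un d$, and jumps of $X$ occur at total rate at most $M_\ttn+M_\nu$, uniformly in $y$ by Assumption \ref{Ass211}(i) and Assumption \ref{Ass203}(iii) together with Lemma \ref{LemB01}. On the event that no jump occurs before time $a/\un d$, we have $\tau^+_a\le a/\un d$. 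Hence
\begin{align}
\sup_y\bP_{(0,y)}\rbra{\tau^+_a>a/\un d}\le 1-e^{-(M_\ttn+M_\nu)a/\un d}\to0 .
\end{align}
On the complementary event, $e^{-q\tau^+_a}\to1$ uniformly, and $Y_{\tau^+_a}$ equals the deterministic flow $\phi_{\tau^+_a}(y)$. By Remark \ref{Rem202}, the $C_0$-group $\bfP^{Y^\prime}$ is strongly continuous, so $\sup_y\sup_{s\le a/\un d}\absol{f(\phi_s(y))-f(y)}\to0$. Combining these gives $\norm{\bfH^{(q)}_af-f}_\infty\to0$.

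\textbf{Feller property (main obstacle).} We must show $\bfH^{(q)}_aC_0(E)\subset C_0(E)$. Since $(X,Y)$ is Feller and $X_{\tau^+_a}=a$, we can write $\bfH^{(q)}_af(y)=\bE_{(0,y)}[e^{-q\tau^+_a}g(X_{\tau^+_a},Y_{\tau^+_a})]$, where $g(x,y')=f(y')h(x)$ for some $h\in C_c$ with $h(a)=1$. So it suffices to prove that the hitting operator of the closed set $[a,\infty)\times E$ preserves $C_0(\bR\times E)$. The standard criterion is that, for a Feller process, the map $z\mapsto\bE_z[e^{-qT_B}g(Z_{T_B})]$ is continuous when every point of $\partial B$ is regular for $B$; here regularity holds by Remark \ref{regular}.

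I would first prove the continuity directly. Write $e^{-q\tau^+_a}g(\cdot)$ as the limit of $e^{-q\epsilon}\bE_\cdot[\ldots]$ composed with $\bfP_\epsilon$, and use the uniform estimate from the strong-continuity step to upgrade pointwise convergence to uniform convergence. Continuity then follows because a uniform limit of continuous functions is continuous.

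For vanishing at infinity in $y$, I would use the bound $\absol{\bfH^{(q)}_af}\le\bE_{(0,y)}[e^{-q\tau^+_a}\absol f(Y_{\tau^+_a})]$. Choose $t_0$ with $e^{-qt_0}<\varepsilon$, so we only need to control $\sup_{s\le t_0}$ of $\absol f(Y_s)$. On this time range $Y$ makes only boundedly many jumps with high probability, and Assumption \ref{Ass211}(i) together with the fact that $\phi_s$ consists of homeomorphisms ensures that $Y$ stays near infinity when started near infinity. I expect this last part, uniform control of $Y$ escaping from compact sets, to need the most care.
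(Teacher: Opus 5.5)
\textbf{Gap: continuity of $y\mapsto\bfH^{(q)}_af(y)$.} Your positivity, contraction, semigroup and strong-continuity steps are fine. The uniform estimate via the jump-rate bound and the flow $\phi$ is in fact stronger than the paper's pointwise (F2), which it gets by dominated convergence.

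The problem is continuity. The ``standard criterion'' you invoke is a strong Feller-type result, of the kind used for the Dirichlet problem for Brownian motion. For general Feller processes it is false. Take uniform motion $Z_t=z+(t,0)$ on $\bR^2$, which is Feller, and the closed quadrant $B=[0,\infty)^2$. Every point of $\partial B$ is regular for $B$. Yet $\bE_{(-1,0)}[e^{-qT_B}]=e^{-q}$, while $\bE_{(-1,-\delta)}[e^{-qT_B}]=0$ for every $\delta>0$.

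Your concrete implementation breaks at the same point. The approximant $u_\epsilon=e^{-q\epsilon}\bfP_\epsilon u$ is continuous only if $u$ already is (which is circular), or if $\bfP_\epsilon$ maps bounded measurable functions to continuous ones. That fails here: with probability at least $e^{-(M_\nu+M_\ttn)\epsilon}$ the pair moves deterministically to $(x+\psi_y(\epsilon),\phi_\epsilon(y))$. So $\bfP_\epsilon$ does not smooth, and the uniform convergence $u_\epsilon\to u$ gives you nothing.

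\textbf{What is missing.} You need an argument that uses the specific geometry of $[a,\infty)\times E$ and the drift bound, not regularity alone. One route combines two facts:
\begin{itemize}
\item[(a)] The path laws depend weakly continuously on the starting point: $\bP_{(0,y')}\Rightarrow\bP_{(0,y)}$ on the Skorokhod space with the $J_1$ topology as $y'\to y$ (\cite[Theorem 19.25]{Kal2021}).
\item[(b)] The functional $w\mapsto e^{-q\tau^+_a(w)}f(w_2(\tau^+_a(w)))$ is continuous at $\bP_{(0,y)}$-almost every path.
\end{itemize}
Fact (b) holds because $\ttd>\un{d}>0$ forces a transversal crossing of level $a$. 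Almost surely $\sup_{s\le\tau^+_a-\eta}X_s<a$ for every $\eta>0$, and $\sup_{s\le T}X_s<a$ for all $T$ on $\{\tau^+_a=\infty\}$; also $X$ exceeds $a$ at continuity times arbitrarily close after $\tau^+_a$. In addition, by the quasi-left-continuity argument of Remark \ref{regular}, there is almost surely no jump at $\tau^+_a$. The continuous mapping theorem then gives continuity. The paper implements the same idea through a Portmanteau argument on the sets $\Xi_a$, chosen so that their boundaries are null.

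\textbf{Vanishing at infinity.} Your plan for this part can probably be completed, but it is heavier than necessary. The paper uses only two ingredients: $\bfP^Y_tf\in C_0(E)$, and the strong Markov property at the hitting time $\cT_{A_\varepsilon}$ of $A_\varepsilon=\{|f|>\varepsilon\}$. These give $\bP^Y_y(\cT_{A_\varepsilon}\le T)\to0$ as $y\to\infty$. There is no need for the homeomorphism property of $\phi_s$ or for the kernel condition in Assumption \ref{Ass211}(i).
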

\begin{proof}
Before proceeding, we first verify a simple fact. 
Let ${\bf{e}}_q$ be an independent exponential random variable with rate $q>0$. 
We define $\{(X^{(q)}_t, Y^{(q)}_t) : t\geq 0\}$ by 
\begin{align}
(X^{(q)}_t, Y^{(q)}_t) = 
\begin{cases}
(X_t, Y_t),\qquad &t< {\bf{e}}_q, \\
\partial ,\qquad &t\geq {\bf{e}}_q,
\end{cases}
\end{align}
where $\partial$ is a cemetery point and $\tau^{(q)}_a=\inf\{t>0:X^{(q)}_t>a\}$ for $a\in\bR$. 
Then, $\{(X^{(q)}_t, Y^{(q)}_t):t\geq 0\}$ and $\{ Y^{(q)}_t:t\geq 0\}$ are also Feller processes with probability measura $\bP_{(x, y)}$ with $(x, y)\in\bR\times E$. 
We assume that $f(\partial)=0$ for $f\in C_0(E)$. 
Then, 
\begin{align}
\bfH^{(q)}_af(y) = \bE_{(0, y)}\sbra{f (Y^{(q)}_{\tau^{(q)}_a}); \tau^{(q)}_a<\infty}. 
\end{align}
From the fact above, it is easy to confirm that for $f\in C_0(E)$ and $\varepsilon>0$ there exists $\delta>0$ such that 
\begin{align}
\sup_{y\in E} \bE_{(0, y)}\sbra{\sup_{t\in[0,\delta]}|f(Y^{(q)}_t)-f(y)| }<\varepsilon. \label{supupper}
\end{align}
Note that \eqref{supupper} also holds for any $E$-valued Feller processes.  
We verify \eqref{supupper} here. 
Suppose that \eqref{supupper} does not hold for some $\varepsilon>0$. 
Then, there exists $n \in\bN$ such that $n\varepsilon>\norm{f}_\infty$. 
For $y\in E$, we define $\ttt^{\frac{\varepsilon}{2}}_y=\inf\{t>0: |f(Y_t)-f(y)|\geq \frac{\varepsilon}{2}\}$. 
Since $\{ Y^{(q)}_t:t\geq 0\}$ is a Feller process and \eqref{supupper} does not hold, we can take $\delta>0$ such that 
\begin{align}
\sup_{t\in[0,\delta]}\sup_{y\in E} \bE_{(0, y)}\sbra{|f(Y^{(q)}_t)-f(y)| }<\frac{\varepsilon}{8n},
\qquad \bP_{(0, y^\prime)}(\ttt^{\frac{\varepsilon}{2}}_{y^\prime}\leq \delta)\geq \frac{\varepsilon}{2\norm{f}_\infty},
 \label{supupperpre}
\end{align}
for some $y^\prime \in E$. 
By the strong Markov property at $\ttt^{\varepsilon}_y$ and \eqref{supupperpre}, we have, for $t\in[0,\delta]$, 
\begin{align}
\bE_{(0, y^\prime)}\sbra{|f(X_t)-f(y)|}&\geq \bE_{(0, y^\prime)}\sbra{\bE_{(Y^{(q)}_{\ttt^{\frac{\varepsilon}{2}}_y}, Y_{\ttt^{\frac{\varepsilon}{2}}_y})}
\sbra{|f(Y^{(q)}_s)-f(y)|}\big{|}_{s=t-\ttt^{\frac{\varepsilon}{2}}_y};\ttt^{\frac{\varepsilon}{2}}_y\leq t }\\
&\geq \bP_{(0, y^\prime)}\rbra{\ttt^{\frac{\varepsilon}{2}}_y\leq t }\rbra{\frac{\varepsilon}{2}-\frac{\varepsilon}{8n}}
\geq  \frac{\varepsilon}{2\norm{f}_\infty}\cdot\frac{3\varepsilon}{8}=\frac{3}{16n}\varepsilon, 
\end{align}
which contradicts the first inequality of \eqref{supupperpre}. 
\par
We verify that $\{\bfH^{(q)}_a:a\geq 0\}$ is a semigroup of positive contraction operators and satisfies conditions (F1) and (F2) in \cite[p.~369]{Kal2021}.
\par
For (F1) above, we show that functions in $C_0(E)$ are mapped into $C_0(E)$ by $\bfH^{(q)}_a$. For this, it suffices to
verify that $\bfH^{(q)}_a f\in C_0(E)$ for every non-negative function $f\in C_0(E)$ and $a \geq 0$ below. 
\par
(i) We verify that $\bfH^{(q)}_a f$ is a continuous function. 
We write $\bD
$ for the space of $(\bR \times E)\cup\{\partial\}$-valued c\`adl\`ag paths with $J_1$-topology. 
For $w=\{(w_1(t), w_2(t)):t\geq0\}\in \bD$, where $w_1(t)\in\bR$ and $w_2(t)\in E$, 
we denote $\tau^+_a(w)=\inf\{t>0:w_1(t)>a\}$. 
For $s, t\geq 0$ with $s\leq t$ and $b, c< 0$ with $b\leq c$, we define 
\begin{align}
\Xi_a (s, t; b, c)&=\{w\in \bD: \tau^+_a(w)\in [s, t), f(w_2(t-)), f(w_2(t))\in (b, c)\}, \\
\Xi_a (s, t; 0, 0)&=\{w\in \bD: \tau^+_a(w)\in [s, t), f(w_2(t-)), f(w_2(t))=0\}, \\
\Xi_a (\infty)&=\{w\in \bD: 
\sup_{t\in[0,\infty)}w_1(t)<a\},
\end{align}
Then, $\Xi_a (0, t; b, c)$ is a open set with respect to $J_1$-topology, so 
by \cite[Theorem 19.25]{Kal2021}, we have, for $y\in E$,  $s, t\geq 0$ with $s\leq t$ and $b, c\geq 0$ with $b\leq c$ such that $\bP_{(0, y)}(\tau^+_a=s)=\bP_{(0, y)}(\tau^+_a=t)=\bP_{(0, y)}(f(Y^{(q)}_{s-})=b)=\bP_{(0, y)}(f(Y^{(q)}_{s})=b)=\bP_{(0, y)}(f(Y^{(q)}_{t-})=c)=\bP_{(0, y)}(f(Y^{(q)}_{t})=c)=0$,
\begin{align}
\bP_{(0, y^\prime)}\rbra{\Xi_a (s, t; b, c)}
=&\bP_{(0, y^\prime)}\rbra{\Xi_a (0, t; b, c)}
-\bP_{(0, y^\prime)}\rbra{\Xi_a (0, s; b, c)}\\
\rightarrow&\bP_{(0, y)}\rbra{\Xi_a (0, t; b, c)}
-\bP_{(0, y)}\rbra{\Xi_a (0, s; b, c)}\\
=&\bP_{(0, y)}\rbra{\Xi_a (s, t; b, c)} \label{upestemate1}
\end{align}
\begin{align}
\bP_{(0, y^\prime)}\rbra{\Xi_a (s, t; 0, 0)}
=&\bP_{(0, y^\prime)}\rbra{\tau^{(q)}_a<t}-\bP_{(0, y^\prime)}\rbra{\Xi_a (0, t; 0, \infty)}\\
&-\bP_{(0, y^\prime)}\rbra{\tau^{(q)}_a<s}+\bP_{(0, y^\prime)}\rbra{\Xi_a (0, s; 0, \infty)}\\
\rightarrow&
\bP_{(0, y)}\rbra{\tau^{(q)}_a<t}-\bP_{(0, y)}\rbra{\Xi_a (0, t; 0, \infty)}\\
&-\bP_{(0, y)}\rbra{\tau^{(q)}_a<s}+\bP_{(0, y)}\rbra{\Xi_a (0, s; 0, \infty)}\\
=&\bP_{(0, y)}\rbra{\Xi_a (s, t; 0, 0)}, \label{upestemate4}
\end{align}
and 
\begin{align}
\bP_{(0, y^\prime)}\rbra{
\tau^{(q)}_a<\infty
}
=\bP_{(0, y^\prime)}\rbra{\Xi_a (\infty)}
\rightarrow 
\bP_{(0, y)}\rbra{\Xi_a (\infty)}
=
\bP_{(0, y)}\rbra{
\tau^{(q)}_a<\infty
},\label{asdf}
\end{align}
as $y^\prime\to y$, where $\bP_{(0, y)}\rbra{\Xi_a (~\cdot~)}:=\bP_{(0, y)}\rbra{\{(X^{(q)}_t, Y^{(q)}_t):t\geq 0\}\in\Xi_a (~\cdot~)}$ for $y\in E$. 
We fix $y\in E$ and $\varepsilon>0$. 
We take $\delta>0$ such that \eqref{supupper}. 
We can take $\bft^\delta:=\{t_k:k\in\bN\cup\{0\}\}$ such that $t_0=0$, $\lim_{m\to\infty}t_m=\infty$, $t_k-t_{k-1}\in(0,\delta)$ and $\bP_{(0,y)}(\tau^{(q)}_a=t_k)=0$ for $k\in\bN$. 
We can take $N\in\bN$ and $\bfb^\varepsilon:=\{b_k:k\in\{-1, 0,1,\dots, N\}\}$ such that $b_{-1}=b_0=0$, $b_N>\norm{f}_\infty$, $\bP_{(0, y)}(f(Y^{(q)}_{t^l})=b_k)=\bP_{(0, y)}(f(Y^{(q)}_{t^l})=b_k)=0$ and $b_k-b_{k-1}\in (0,\varepsilon)$ for $k\in\{1,\dots, N\}$ and $l\in \bN$. 
By \eqref{asdf}, Fatou's lemma, \eqref{upestemate1} and \eqref{upestemate4}, we have, for $y^\prime \in E$, 
\begin{align}
\bP_{(0, y)}(\tau^{(q)}_a<\infty)=&
\lim_{y^\prime\to y}\bP_{(0, y^\prime)}(\tau^{(q)}_a<\infty)\\
=&\liminf_{y^\prime\to y}\sum_{k\in\{0,1, \dots, M\}, m\in\bN}\bP_{(0,y^\prime)}(\Xi_a (t_{m-1}, t_{m}; b_{k-1}, b_k))
\\
&+\limsup_{y^\prime\to y}\bP_{(0,y^\prime)}(\{\tau^{(q)}_a\in\bft^\varepsilon\}\cup \bigcup_{l\in\bN}\{f(Y^{(q)}_{t^l})\in\bfb^\varepsilon\backslash\{b_0\}\})\\
&\geq\sum_{k\in\{0,1, \dots, M\}, m\in\bN}\bP_{(0,y)}(\Xi_a (t_{m-1}, t_{m}; b_{k-1}, b_k))
\\
&+\limsup_{y^\prime\to y}\bP_{(0,y^\prime)}(\{\tau^{(q)}_a\in\bft^\varepsilon\}\cup \bigcup_{l\in\bN}\{f(Y^{(q)}_{t^l})\in\bfb^\varepsilon\backslash\{b_0\}\})\\
=\bP_{(0, y)}(\tau^{(q)}_a<&\infty)+\limsup_{y^\prime\to y}\bP_{(0,y^\prime)}(\{\tau^{(q)}_a\in\bft^\varepsilon\}\cup \bigcup_{l\in\bN}\{f(Y^{(q)}_{t^l})\in\bfb^\varepsilon\backslash\{b_0\}\}),
\end{align}
and so 
\begin{align}
\limsup_{y^\prime\to y}\bP_{(0,y^\prime)}(\{\tau^{(q)}_a\in\bft^\varepsilon\}\cup \bigcup_{l\in\bN}\{f(Y^{(q)}_{t^l})\in\bfb^\varepsilon\backslash\{b_0\}\})=0. \label{allupper}
\end{align}
By \eqref{supupper}, we have, for $y^\prime \in E$,
\begin{align}
&\absol{
\bfH^{(q)}_a f(y^\prime)-\sum_{k\in\{0,1, \dots, M\}, m\in\bN}b_k\bP_{(0,y^\prime)}(\Xi_a (t_{m-1}, t_{m}; b_{k-1}, b_k))}\\
&\leq\sum_{k\in\{0,1, \dots, M\}, m\in\bN}\bE_{(0,y^\prime)}\sbra{\absol{f(Y^{(q)}_{\tau^+_a})-b_k} ;\Xi_a (t_{m-1}, t_{m}; b_{k-1}, b_k)}\\
&\qquad+\norm{f}_\infty\bP_{(0,y^\prime)}(\{\tau^{(q)}_a\in\bft^\varepsilon\}\cup \bigcup_{l\in\bN}\{f(Y^{(q)}_{t^l})\in\bfb^\varepsilon\backslash\{b_0\}\})\\
&\leq\sum_{k\in\{0,1, \dots, M\}, m\in\bN}\bE_{(0,y^\prime)}\sbra{ \bE_{(X^{(q)}_{t_{m-1}},Y^{(q)}_{t_{m-1}})}
\sbra{\absol{f(Y^{(q)}_{\tau^{(q)}_a})-b_k} 1_{\{\tau^{(q)}_a<t_m-t_{m-1}\}}};\Xi_a (t_{m-1}, \infty; b_{k-1}, b_k)}\\
&\qquad+\norm{f}_\infty\bP_{(0,y^\prime)}(\{\tau^{(q)}_a\in\bft^\varepsilon\}\cup \bigcup_{l\in\bN}\{f(Y^{(q)}_{t^l})\in\bfb^\varepsilon\backslash\{b_0\}\})\\
&\leq 2\varepsilon\bP_{(0, y^\prime)}(\tau^{(q)}_a<\infty)+\norm{f}_\infty\bP_{(0,y^\prime)}(\{\tau^{(q)}_a\in\bft^\varepsilon\}\cup \bigcup_{l\in\bN}\{f(Y^{(q)}_{t^l})\in\bfb^\varepsilon\backslash\{b_0\}\}). \label{upestemate2}
\end{align}
By Fatou's lemma, \eqref{upestemate1} and \eqref{upestemate4}, we have 
\begin{align}
\sum_{k\in\{0,1, \dots, M\}, m\in\bN}&b_k\bP_{(0,y)}(\Xi_a (t_{m-1}, t_{m}; b_{k-1}, b_k))\\
&=\sum_{k\in\{0,1, \dots, M\}, m\in\bN}b_k\liminf_{y^\prime \to y}\bP_{(0,y^\prime)}(\Xi_a (t_{m-1}, t_{m}; b_{k-1}, b_k))\\
&\leq\liminf_{y^\prime \to y}\sum_{k\in\{0,1, \dots, M\}, m\in\bN}b_k\bP_{(0,y^\prime)}(\Xi_a (t_{m-1}, t_{m}; b_{k-1}, b_k))\\
&\leq\limsup_{y^\prime \to y}\sum_{k\in\{0,1, \dots, M\}, m\in\bN}b_k\bP_{(0,y^\prime)}(\Xi_a (t_{m-1}, t_{m}; b_{k-1}, b_k))\\
&\leq\sum_{k\in\{0,1, \dots, M\}, m\in\bN}b_k\limsup_{y^\prime \to y}\bP_{(0,y^\prime)}(\Xi_a (t_{m-1}, t_{m}; b_{k-1}, b_k))\\
&=\sum_{k\in\{0,1, \dots, M\}, m\in\bN}b_k\bP_{(0,y)}(\Xi_a (t_{m-1}, t_{m}; b_{k-1}, b_k)). 
\label{upestemate5}
\end{align}
By \eqref{allupper}, \eqref{upestemate2} and \eqref{upestemate5}, we have 
\begin{align}
\limsup_{y\prime\to y}\absol{\bfH^{(q)}_a f(y^\prime)-\bfH^{(q)}_a f(y)}\leq4\varepsilon. \label{upestemate3}
\end{align}
Since \eqref{upestemate3} holds for any $y\in E$ and $\varepsilon>0$, we have $\lim_{y\prime\to y}\absol{\bfH^{(q)}_a f(y^\prime)-\bfH^{(q)}_a f(y)}=0$ for $y\in E$. 
\par
(ii) We prove 
\begin{align}
\lim_{y\to\infty}\bfH^{(q)}_a f(y)=0. 
\end{align}
We fix $\varepsilon\in(0, \norm{f}_\infty)$. Then, there exists $T\in(0, \infty)$ such that 
\begin{align}
e^{-qT}\norm{f}_\infty<\varepsilon. 
\end{align}
We have 
\begin{align}
\absol{\bfH^{(q)}_a f(y)}=& \bE_{(0, y)}\sbra{e^{-q\tau^+_a}f (Y_{\tau^+_a}); \tau^+_a\leq T}
+\bE_{(0, y)}\sbra{e^{-q\tau^+_a}f (Y_{\tau^+_a}); \tau^+_a>T}\\
\leq &\bE^Y_{ y}\sbra{\sup_{t\in[0, T]}f (Y_t)}+\varepsilon. \label{Lem501_005}
\end{align}
For $\delta>0$, we define $A_{\delta}=\{y\in E: |f(y)|>\delta\}$, which is a bounded set, and 
$\cT_{A_{\delta}}=\inf\{t>0: Y_t\in A_{\delta}\}$. 
Then, we have 
\begin{align}
\bE^Y_{y}\sbra{\sup_{t\in[0, T]}f (Y_t)} \leq \varepsilon + \bP_y(\cT_{A_{\varepsilon}}\leq T)\norm{f}_\infty. 
\label{Lem501_003}
\end{align}
We take $n\in\bN$ such that $\norm{\bfP^Y_t f-f}_\infty <\frac{\varepsilon}{2}$ for $t\in[0, \frac{T}{n}]$.  
Then, by the strong Markov property at $\cT_{A_{\varepsilon}}$, we have, for $k\in\{1, 2,\dots, n\}$, 
\begin{align}
 \bE^Y_y\sbra{f(Y_{\frac{k}{n}T}); \cT_{A_{\varepsilon}}\in (\frac{k-1}{n}T,\frac{k}{n}T]}
=&\bE^Y_y\sbra{\bfP^Y_{\frac{k}{n}T- \cT_{A_{\varepsilon}}}f (Y_{\cT_{A_{\varepsilon}}}); \cT_{A_{\varepsilon}}\in (\frac{k-1}{n}T,\frac{k}{n}T]}\\
\geq& \bP^Y_y\rbra{\cT_{A_{\varepsilon}}\in (\frac{k-1}{n}T,\frac{k}{n}T]}\rbra{\varepsilon-\frac{\varepsilon}{2}}. 
\label{Lem501_001}
\end{align}
By the property of Feller semigroups, we have 
\begin{align}
\lim_{y\to\infty} \bE^Y_y\sbra{f(Y_{\frac{k}{n}T}); \cT_{A_{\varepsilon}}\in (\frac{k-1}{n}T,\frac{k}{n}T]}=0. 
\label{Lem501_002}
\end{align}
By \eqref{Lem501_001} and \eqref{Lem501_002}, we have 
\begin{align}
\limsup_{y\to\infty}\bP_y(\cT_{A_{\varepsilon}}\leq T)\leq
\sum_{k\in\{1,\cdots,n\}} \limsup_{y\to\infty} \bP^Y_y\rbra{ \cT_{A_{\varepsilon}}\in (\frac{k-1}{n}T,\frac{k}{n}T]}
=0. 
\label{Lem501_004}
\end{align}
By \eqref{Lem501_005}, \eqref{Lem501_003} and \eqref{Lem501_004}, we have 
\begin{align}
\limsup_{y\to\infty}\absol{\bfH^{(q)}_a f(y)} \leq 2\varepsilon . 
\label{Lem501_006}
\end{align}
Since \eqref{Lem501_006} holds for any $\varepsilon\in(0, \norm{f}_\infty)$, we have $\limsup_{y\to\infty}\absol{\bfH^{(q)}_a f(y)}=0$. 

\par
(iii) We show that $\bfH^{(q)}$ is a semigroup of positive contraction operators with condition ($F_2$) in \cite[p.369]{Kal2021}. 
Positivity and the contraction property are immediate from its form. 
The condition ($F_2$) is also obtaind immediately by the dominated convergence theorem. 
So, it remains to verify the semigroup property. 
By the strong Markov peoprty and \eqref{additive_property}, we have, for $a, b\geq 0$, $f\in C_0(E)$ and $y\in E$, 
\begin{align}
\bfH^{(q)}_{a+b}f(y)=&\bE_{(0, y)}\sbra{e^{-q\tau^+_{a+b}}f (Y_{\tau^+_{a+b}}); \tau^+_{a+b}<\infty}\\
=&\bE_{(0, y)}\sbra{e^{-q\tau^+_{a}}\bE_{(X_{\tau^+_a}, Y_{\tau^+_a})}\sbra{f (Y_{\tau^+_{a+b}});\tau^+_{a+b}<\infty}; \tau^+_a<\infty}\\
=&\bE_{(0, y)}\sbra{e^{-q\tau^+_{a}}\bE_{(a, Y_{\tau^+_a})}\sbra{f (Y_{\tau^+_{a+b}});\tau^+_{a+b}<\infty}; \tau^+_a<\infty}\\
=&\bE_{(0, y)}\sbra{e^{-q\tau^+_{a}}\bE_{(0, Y_{\tau^+_a})}\sbra{f (Y_{\tau^+_{b}});\tau^+_{b}<\infty}; \tau^+_a<\infty}=\bfH^{(q)}_{a}\bfH^{(q)}_{b}f(y), 
\end{align}
and the semigroup property holds. 
The proof is complete. 
\end{proof}
\begin{Lem}\label{Lem502}
Fix $q>0$. 
The generator associated with the semigroup $\bfH^{(q)}$ is given by  
\begin{align}
 \tilde{\cA}_Y^{(q)}f(y)=\frac{1}{\ttd(y)}\rbra{\cA_Y^{(q)}f(y)+\cJ_X^{(q)}f(y)+\cJ_Y^{(q)}f(y)},\qquad y\in E, 
\end{align}
for $f\in D(\tilde{\cA}_Y^{(q)})=D(\cA_Y)$, 
where for $f\in C_0(E)$, 
\begin{align}
\cJ^{(q)}_X f (y )= 
\int_{(-\infty, 0)} \rbra{\bE_{(0, y)}\sbra{f(Y^{(q)}_{\tau^{(q)}_{-x}})}-f(y)}
\ttn(y, \diff x), \qquad y\in E,
\end{align} 
and 
\begin{align}
\cJ^{(q)}_Y f (y )= \int_E\rbra{
\int_{(-\infty, 0)} \rbra{\bE_{(0, y^\prime)}\sbra{f(Y^{(q)}_{\tau^{(q)}_{-x}})}-f(y^\prime)}
\rho_{y, y^\prime}(\diff x)} \nu(y, \diff y^\prime),\qquad y\in E. 
\end{align} 
As a consequence, $\bfH^{(q)}$ can be extended to a $C_0$-group.
\end{Lem}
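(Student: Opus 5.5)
We compute the generator of $\bfH^{(q)}$ from a first-order expansion of $\bfH^{(q)}_a$ as $a\downarrow0$, conditioning on what happens before $\tau^+_a$. Then we obtain the group extension by writing the generator as a multiplicative perturbation of a group generator plus bounded operators.

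\textbf{Step 1: decomposition on $[0,\tau^+_a]$.} Fix $f\in D(\cA_Y)$ and $y\in E$. Since $X$ has drift $\ttd(Y_s)\in(\un d,\bar d)$ and only finitely many jumps on bounded intervals, the probability of a jump of $X$ or $Y$ before time $\tau^+_a$ is $O(a)$ uniformly in $y$, and two or more jumps have probability $o(a)$. Splitting $\bfH^{(q)}_af(y)-f(y)$ according to whether a jump occurs gives three contributions.
\begin{itemize}
\item \emph{No jump.} Then $Y$ follows the deterministic flow $\phi_t(y)$ and $\tau^+_a$ solves $\int_0^{\tau}\ttd(\phi_s(y))\,\diff s=a$, so $\tau^+_a\approx a/\ttd(y)$. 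This contributes
\begin{align}
e^{-q a/\ttd(y)}f(\phi_{a/\ttd(y)}(y))-f(y)\approx \frac{a}{\ttd(y)}\rbra{\cA_{Y^\prime}f(y)-qf(y)}.
\end{align}
\item \emph{A jump of $X$ alone at rate $\ttn(y,\diff x)$.} After the jump, the process must climb from $x$ back up to level $a\approx 0$. By the strong Markov property and \eqref{additive_property}, this yields $\bE_{(0,y)}[f(Y^{(q)}_{\tau^{(q)}_{-x}})]$.
\item \emph{A jump of $Y$ from $y$ to $y^\prime$ (Lemma \ref{LemB01}) with simultaneous $X$-jump $x\sim\rho_{y,y^\prime}$.} This yields the analogous term started from $y^\prime$.
\end{itemize}
The time spent before the jump is of order $a/\ttd(y)$, which produces the factor $1/\ttd(y)$. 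Collecting terms and using $\cA_Y=\cA_{Y^\prime}+\cA_\nu$ (so the $-f(y)$ terms of the $\nu$-jumps combine into $\cA_\nu$), we get, with $\cA^{(q)}_Y=\cA_Y-q\bfI$,
\begin{align}
\lim_{a\downarrow0}\frac{\bfH^{(q)}_af-f}{a}=\frac{1}{\ttd}\rbra{\cA^{(q)}_Yf+\cJ^{(q)}_Xf+\cJ^{(q)}_Yf}.
\end{align}
The limit must hold uniformly in $y$. For this we use \eqref{supupper}, the uniform bounds $M_\nu,M_\ttn$, the continuity conditions \eqref{finiteness1} and \eqref{finiteness2}, and Lemma \ref{Lem401}, which ensures that $y\mapsto \bE_{(0,y)}[f(Y^{(q)}_{\tau^{(q)}_{-x}})]=\bfH^{(q)}_{-x}f(y)$ is in $C_0(E)$ and continuous in $x$. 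This shows $D(\cA_Y)\subset D(\tilde\cA^{(q)}_Y)$ with the stated formula.

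\textbf{Step 2: equality of domains.} The operator $\cJ^{(q)}_X+\cJ^{(q)}_Y$ is bounded with norm at most $2(M_\ttn+M_\nu)$. The operator $\cA_Y-q\bfI$ generates a $C_0$-group, by Assumption \ref{Ass211} and the remark following it. Multiplying by $1/\ttd$, which is continuous and bounded above and below, preserves closedness, and the bounded perturbation theorem applies. Hence the operator $B:=\frac1\ttd(\cA^{(q)}_Y+\cJ^{(q)}_X+\cJ^{(q)}_Y)$ on $D(\cA_Y)$ is closed and $\lambda-B$ is surjective for large $\lambda$. Since $\tilde\cA^{(q)}_Y$ is a generator extending $B$, and $\lambda-\tilde\cA^{(q)}_Y$ is injective, the standard argument gives $D(\tilde\cA^{(q)}_Y)=D(\cA_Y)$.

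\textbf{Step 3: group extension (the main obstacle).} It suffices to show that $\frac1\ttd\cA_{Y^\prime}$ generates a $C_0$-group; the remaining terms are bounded ($\frac1\ttd$ times bounded operators) and are handled by the bounded perturbation theorem for groups \cite[Theorem III.1.3]{EngNag2000}. The operator $\frac1\ttd\cA_{Y^\prime}$ is a multiplicative perturbation of the generator of the isometric group $f\mapsto f\circ\phi_t$. My first attempt is a time change: set $\psi_s(y)=\phi_{\sigma_s(y)}(y)$, where $\int_0^{\sigma}\ttd(\phi_r(y))^{-1}\diff r=s$ is solved for both positive and negative $s$. Because $\ttd$ is bounded between $\un d$ and $\bar d$, this is well defined, and $\{\psi_s\}$ is a continuous flow of homeomorphisms. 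Then $f\mapsto f\circ\psi_s$ is a contractive $C_0$-group whose generator is $\frac1\ttd\cA_{Y^\prime}$. Alternatively, since the group is contractive, one can invoke \cite{Dor1966} on multiplicative perturbations of generators of contraction groups, as indicated in Remark \ref{Rem202}. The delicate points are the strong continuity of the time-changed group in $C_0(E)$ for negative times and the identification of its generator domain with $D(\cA_{Y^\prime})$. I expect these to require the same argument as in Remark \ref{Rem202}.
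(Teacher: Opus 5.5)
Your proposal is correct in substance, but it identifies the generator by a different route from the paper.

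\textbf{The paper's route.} The paper proves a global Dynkin-type identity. It stops Dynkin's martingale for $Y^{(q)}$ at $\tau^{(q)}_a$ and splits $\int_0^{\tau^{(q)}_a}\cA^{(q)}_Yf(Y^{(q)}_s)\,\diff s$ into two parts. The time $X$ spends at its running supremum becomes an integral $\diff b/\ttd(Y^{(q)}_{\tau^{(q)}_b})$. The excursion intervals below the supremum are evaluated using the strong Markov property at excursion starts, the identity $\bE_{(x,y)}[\int_0^{\tau^{(q)}_0}\cA^{(q)}_Yf(Y^{(q)}_s)\,\diff s]=\bE_{(x,y)}[f(Y^{(q)}_{\tau^{(q)}_0})]-f(y)$, and the compensation formula. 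This yields $\bfH^{(q)}_af-f=\int_0^a\bfH^{(q)}_b\tilde{\cA}^{(q)}_Yf\,\diff b$ for every $a$. Uniform convergence of the difference quotient is then automatic from strong continuity once $\tilde{\cA}^{(q)}_Yf\in C_0(E)$.

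\textbf{Your route.} You use an exact first-jump decomposition at small level $a$,
\[
\bfH^{(q)}_af(y)=e^{-q\sigma_a(y)}f(\phi_{\sigma_a(y)}(y))\,\bP_{(0,y)}(T_1>\sigma_a(y))+\bE_{(0,y)}\bigl[e^{-qT_1}\bfH^{(q)}_{a-X_{T_1}}f(Y_{T_1});T_1<\sigma_a(y)\bigr].
\]
This is more elementary: there is no optional stopping, no limit in the time horizon, and no compensation formula over excursions. The strong continuity of $\bfH^{(q)}$ from Lemma \ref{Lem401} cleanly absorbs the $O(a)$ discrepancy between $\bfH^{(q)}_{a-X_{T_1}}$ and $\bfH^{(q)}_{-\Delta X_{T_1}}$. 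The price is that uniformity in $y$ must be checked term by term. For instance, $\sigma_a(y)/a\to 1/\ttd(y)$ is only locally uniform, so you must use that it multiplies $C_0$ functions. You can avoid this by noting that, for a Feller semigroup, a pointwise limit of difference quotients lying in $C_0(E)$ already identifies the generator (positive maximum principle plus surjectivity of $\lambda$ minus the generator). The paper's excursion-theoretic argument is the one that would survive where there is no first jump.

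\textbf{Steps 2 and 3.} Your Step 2 spells out the fact that a generator has no proper generator extension, which the paper leaves implicit. Note that Step 2 needs Step 3 to get surjectivity of $\lambda-B$. For the group property, both you and the paper reduce to $\frac{1}{\ttd}\cA_{Y^\prime}$ plus a bounded perturbation. The paper does this via Dorroh's theorem; you do it via an explicit time-changed flow.

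\textbf{Two slips to fix.}

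First, the claim that two or more jumps before $\tau^+_a$ have probability $o(a)$ is false. After a first jump of size $x$, the climb back takes time at least $|x|/\bar{d}$, and a further jump during that time has probability of order one. What is $o(a)$ is two jumps in the window $[0,a/\un{d}]$. You do not even need that, because the strong Markov property at the first jump is exact.

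Second, your time change is inverted. With $\int_0^{\sigma}\ttd(\phi_r(y))^{-1}\,\diff r=s$ one gets $\partial_s\sigma=\ttd(\phi_\sigma(y))$, so the generator is $\ttd\,\cA_{Y^\prime}$, not $\frac{1}{\ttd}\cA_{Y^\prime}$. You need $\int_0^{\sigma_s(y)}\ttd(\phi_r(y))\,\diff r=s$, which is exactly the no-jump passage-time relation from your Step 1. With this correction, the points you call delicate become routine:
\begin{itemize}
\item Since $|\sigma_s(y)|\le |s|/\un{d}$, you get $\|f\circ\psi_s-f\|_\infty\le\sup_{|r|\le |s|/\un{d}}\|f\circ\phi_r-f\|_\infty$ for both signs of $s$.
\item The domain equality follows from two inclusions: view $\psi$ as a time change of $\phi$, and recover $\phi$ from $\psi$ by the same construction with $\ttd$ replaced by $1/\ttd$.
\end{itemize}
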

\begin{proof}
Note that the generator of $\{ Y^{(q)}_t:t\geq 0\}$ is $\cA_Y^{(q)}:=\cA_Y - q\bfI$ on $D(\cA_Y)$. 
\par
(i) We take $q>0$ and $f\in D(\cA_Y)$, and define $\tilde{M}^{f,(q)}=\{\tilde{M}^{f,(q)}_a:a\geq 0\}$ by 
\begin{align}
\tilde{M}^{f,(q)}_a=f(Y^{(q)}_{\tau^{(q)}_a})-f(Y^{(q)}_0)-\int_0^a \frac{1}{\ttd (Y^{(q)}_{\tau^{(q)}_b})} 
\tilde{\cA}_Y^{(q)} f(Y^{(q)}_{\tau^{(q)}_b} )\diff b, 
\end{align}
We denote $\{\cF^{(q)}_{t}: t\geq 0\}$ the natural filtlation generated from $\{(X^{(q)}_t, Y^{(q)}_t) : t\geq 0\}$. 
We prove that $\tilde{M}^{f,(q)}$ is a martingale with respect to the filtration $\{\cF^{(q)}_{\tau^{(q)}_a}:a\geq 0\}$ under $\bP_{(0,y)}$ with $y\in E$. 
For this purpose, it suffices to show that 
\begin{align}
\bE_{(0, y)}\sbra{\tilde{M}^{f,(q)}_a}=0,\qquad a\geq 0, ~y\in E, 
\label{changedmartingale1}
\end{align}
holds. Indeed, we have, for $0\leq a <b$ and $y\in E$, 
\begin{align}
\bE_{(0 ,y)}&\sbra{\tilde{M}^{f,(q)}_b\big{|}\cF_{\tau^{(q)}_a}}\\
&=\tilde{M}^{f,(q)}_a+
\bE_{(0 ,y)}\sbra{f(Y^{(q)}_{\tau^{(q)}_b})-f(Y^{(q)}_{\tau^{(q)}_a})-\int_0^b \frac{1}{\ttd (Y^{(q)}_{\tau^{(q)}_u})} 
\tilde{\cA}_Y^{(q)} f(Y^{(q)}_{\tau^{(q)}_u} )\diff u\Bigg{|}\cF_{\tau^{(q)}_a}}\\
&=\tilde{M}^{f,(q)}_a+
\bE_{(X^{(q)}_{\tau^{(q)}_a} ,Y^{(q)}_{\tau^{(q)}_a})}\sbra{f(Y^{(q)}_{\tau^{(q)}_b})-f(Y^{(q)}_0)-\int_0^b \frac{1}{\ttd (Y^{(q)}_{\tau^{(q)}_u})} 
\tilde{\cA}_Y^{(q)} f(Y^{(q)}_{\tau^{(q)}_u} )\diff u}\\
&=\tilde{M}^{f,(q)}_a+
\bE_{(0 ,Y^{(q)}_{\tau^{(q)}_a})}\sbra{\tilde{M}^{f,(q)}_{b-a}}1_{\{\tau^{(q)}_a<\infty\}}=\tilde{M}^{f,(q)}_a,
\end{align}
where in the second equality we used the strong Markov property, in the third eaulity we used the fact that $X^{(q)}_{\tau^{(q)}_a}=a$ if $\tau^{(q)}_a<\infty$ together with \eqref{additive_property}, and the last equality we used \eqref{changedmartingale1}. 
\par 
To prove \eqref{changedmartingale1}, we time-change the martingale obtained by applying Dynkin's formula to $Y$ and compare the resulting martingale with $\tilde{M}^{f,(q)}$.
By Dynkin's formula (see, e.g., \cite[Lemma 17.21]{Kal2021}), the process $\{M^f_t:t\geq 0\}$, where 
\begin{align}
M^f_t = f(Y^{(q)}_t)-f(Y^{(q)}_0)-\int_0^t \cA_Y^{(q)} f(Y^{(q)}_s )\diff s,\qquad t\geq 0, 
\end{align}
is a martingale under $\bP_{(x, y)}$ for every $(x, y)\in\bR\times E$ with respect to the filtration $\{\cF^{(q)}_{t}: t\geq 0\}$.  
For $f \in D(\cA_Y)$ and $t>0$, we define $M^{f,(q), t}=\{M^{f,(q),t}_a:a\geq 0\}$ with $f \in D(\cA_Y)$ by 
\begin{align}
M^{f, (q), t}_a = f(Y^{(q)}_{\tau^{(q)}_a\land t})-f(Y^{(q)}_0)-\int_0^{\tau^{(q)}_a\land t} \cA_Y^{(q)} f(Y^{(q)}_s )\diff s,\qquad a\geq 0, 
\end{align}
is a martingale with respect to the filtration $\{\cF^{(q)}_{\tau^{(q)}_a}:a\geq 0\}$ by \cite[Thorem 9.12]{Kal2021}. 
Since $f$ and $\cA^{(q)}_Yf$ are bounded and by Exercises 15 in \cite[p.182]{Kal2021}, we have, for $0<a<b$ and $y\in E$, 
\begin{align}
\lim_{n\to\infty}M^{f, (q), n}_a=\lim_{n\to\infty}\bE_{(0, y)}\sbra{M^{f, (q), n}_b\big{|}\cF^{(q)}_{\tau^{(q)}_a}}
=\bE_{(0, y)}\sbra{\lim_{n\to\infty}M^{f, (q), n}_b\big{|}\cF^{(q)}_{\tau^{(q)}_a}},
\end{align} 
and $M^{f,(q)}=\{M^{f,(q)}_a:a\geq 0\}$, where 
\begin{align}
M^{f,(q)}_a=\lim_{n\to\infty} M^{f,(q), n}_a= f(Y^{(q)}_{\tau^{(q)}_a})-f(Y^{(q)}_0)-\int_0^{\tau^{(q)}_a} \cA_Y^{(q)} f(Y^{(q)}_s )\diff s,\qquad a\geq0,
\end{align}
is a martingale. 
\par 
Since $X$ has only finitely many jumps on every finite time interval and, by \eqref{L--K}, always has a strictly positive drift, we can inductively define the following stopping times and intervals: for $n\in\bN$, 
\begin{align}
&T^{X,(q)}_0= 0, \qquad  S^{X,(q)}_n=\inf\{t>T^{X,(q)}_{n-1}: X^{(q)}_{t-}=\bar{X}^{(q)}_{t-}, \Delta X^{(q)}_t \neq 0, t\not\in\bfT_\nu\}, \\
T^{X,(q)}_n&=\inf\{t>S^{X,(q)}_n: X^{(q)}_t=\bar{X}^{(q)}_t\}, \quad I^{X,(q)}_n=[S^{X,(q)}_n, T^{X,(q)}_n) , 
\quad I^{X,(q)}=\cup_{n\in\bN}I^{X,(q)}_n,
\\
&T^{Y,(q)}_0= 0, \qquad  S^{Y,(q)}_n=\inf\{t>T^{Y,(q)}_{n-1}: X^{(q)}_{t-}=\bar{X}^{(q)}_{t-}, \Delta X^{(q)}_t \neq 0, t\in\bfT_\nu\},\\
T^{Y,(q)}_n&=\inf\{t>S^{Y,(q)}_n: X^{(q)}_t=\bar{X}^{(q)}_t\}, \quad
I^{Y,(q)}_n=[S^{Y,(q)}_n, T^{Y,(q)}_n), \quad I^{Y,(q)}=\cup_{n\in\bN}I^{Y,(q)}_n,
\end{align}
where $\Delta X^{(q)}_t=X^{(q)}_t-X^{(q)}_{t-}$ and $\bar{X}^{(q)}_T=\sup_{s\in[0, t]}X^{(q)}_s$ for $t\geq 0$. 
In addition, we denote $I^{(q)}:=I^{X, (q)}\cup I^{Y,(q)}$. 
Furthermore, the objects obtained by replacing $X^{(q)}$ and $Y^{(q)}$ above with $X$ and $Y$, respectively, are denoted by removing the superscript $(q)$ from all the corresponding notation.
It follows from \eqref{L--K} that, with probability one, $X$ increases with drift rate $\ttd(Y_t)$ for $t\in[0, \infty)\backslash I^{(q)}$. Hence, 
\begin{align}
&M^{f,(q)}_a=f(Y^{(q)}_{\tau^{(q)}_a})-f(Y^{(q)}_0)-\int_0^{\tau^{(q)}_a} \cA_Y^{(q)} f(Y^{(q)}_s )\diff s\\
&=f(Y^{(q)}_{\tau^{(q)}_a})-f(Y^{(q)}_0)-\int_{[0, \tau^{(q)}_a]\backslash I^{(q)}} \cA_Y^{(q)} f(Y^{(q)}_s )\diff s
-\int_{I^{(q)}\cap[0,\tau^{(q)}_a]} \cA_Y^{(q)} f(Y^{(q)}_s )\diff s \\
&=f(Y^{(q)}_{\tau^{(q)}_a})-f(Y^{(q)}_0)-\int_0^a \frac{1}{\ttd (Y^{(q)}_{\tau^{(q)}_b})} \cA_Y^{(q)} f(Y^{(q)}_{\tau^{(q)}_b} )\diff b
-\int_{I^{(q)}\cap[0,\tau^{(q)}_a]}
\cA_Y^{(q)} f(Y^{(q)}_s )\diff s.  \label{change1}
\end{align}
By the strong Markov property and the independence of $X$, we have, for $a\geq 0$, 
\begin{align}
&\bE_{(0 ,y)}\sbra{\int_{ I^{Y,(q)}\cap[0, \tau^{(q)}_a]} \cA_Y^{(q)} f(Y^{(q)}_s )\diff s}\\
&=\sum_{n\in\bN}\bE_{(0 ,y)}\sbra{1_{\{ S^{Y,(q)}_n<\tau^{(q)}_a\}} \int_{ S^{Y,(q)}_n}^{ T^{Y,(q)}_n} \cA_Y^{(q)} f(Y^{(q)}_s )\diff s}\\
&=\sum_{n\in\bN}\bE_{(0 ,y)}\sbra{ 1_{\{ S^{Y,(q)}_n<\tau^{(q)}_a\}} \bE_{( X^{(q)}_{S^{Y,(q)}_n}, Y^{(q)}_{S^{Y,(q)}_n})}\sbra{\int_0^{ \tau^{(q)}_0} \cA_Y^{(q)} f(Y^{(q)}_s )\diff s}}\\
&=\sum_{n\in\bN}\bE_{(0 ,y)}\sbra{ 1_{\{ S^{Y,(q)}_n<\tau^{(q)}_a\}} 
\int_{(-\infty, 0)} \bE_{(x, Y^{(q)}_{S^{Y,(q)}_n})}\sbra{\int_0^{ \tau^{(q)}_0} \cA_Y^{(q)} f(Y^{(q)}_s )\diff s}
\rho_{Y_{S^{Y,(q)}_n-}, Y_{S^{Y,(q)}_n}}(\diff x)}\\
&=\bE_{(0 ,y)}\sbra{\sum_{n\in\bN} e^{-qS^{Y}_n}1_{\{ S^{Y}_n<\tau^+_a\}} 
\int_{(-\infty, 0)} \bE_{(x, Y_{S^{Y,(q)}_n})}\sbra{\int_0^{ \tau^{(q)}_0} \cA_Y^{(q)} f(Y^{(q)}_s )\diff s}
\rho_{Y_{S^{Y}_n-}, Y_{S^{Y}_n}}(\diff x)}. \label{nagaino1}
\end{align}
Since $M^{f, (q)}$ is a martingale, 
we have, for $x<0$, 
\begin{align}
\bE_{(x, y)}\sbra{\int_0^{\tau^{(q)}_0} \cA_Y^{(q)} f(Y^{(q)}_s )\diff s}
=&\bE_{(x, y)}\sbra{f(Y^{(q)}_{\tau^{(q)}_0})}-f(y). \label{mijika1}
\end{align}
By \eqref{nagaino1}, \eqref{mijika1} and Lemm \ref{LemB01}
\begin{align}
&\bE_{(0 ,y)}\sbra{\int_{ I^{Y,(q)}\cap[0, \tau^+_a]} \cA_Y^{(q)} f(Y^{(q)}_s )\diff s}\\
&=\bE_{(0 ,y)}\sbra{\sum_{n\in\bN} e^{-qS^{Y}_n}1_{\{ S^{Y}_n<\tau^+_a\}} 
\int_{(-\infty, 0)} \rbra{\bE_{(x, Y_{S^Y_n})}\sbra{f(Y^{(q)}_{\tau^+_0})}-f(Y_{S^Y_n})}
\rho_{Y_{S^{Y}_n-}, Y_{S^{Y}_n}}(\diff x)}
\\
&=\bE_{(0 ,y)}\sbra{\int_0^{\tau^+_a} e^{-qt}\cJ^{(q)}_Yf(Y_t)1_{\{X_t=\bar{X}_t\}}
\diff t}
=\bE_{(0 ,y)}\sbra{\int_0^{\tau^{(q)}_a} \cJ^{(q)}_Yf(Y^{(q)}_t)1_{\{X^{(q)}_t=\bar{X}^{(q)}_t\}}
\diff t}\\
&=\bE_{(0 ,y)}\sbra{\int_0^{a} \frac{1}{\ttd (Y^{(q)}_{\tau^{(q)}_b})}\cJ^{(q)}_Yf(Y^{(q)}_{\tau^{(q)}_b})
\diff b}. \label{change2}
\end{align}
By the strong Markov property, the independence of $X$, the compensation formula for the jump measure of $X$ and \eqref{mijika1}, we have, for $a\geq 0$, 
\begin{align}
&\bE_{(0 ,y)}\sbra{\int_{ I^{X,(q)}\cap[0, \tau^{(q)}_a]} \cA_Y^{(q)} f(Y^{(q)}_s )\diff s}\\
&=\sum_{n\in\bN}\bE_{(0 ,y)}\sbra{1_{\{ S^{X,(q)}_n<\tau^{(q)}_a\}} \int_{ S^{X,(q)}_n}^{ T^{X,(q)}_n} \cA_Y^{(q)} f(Y^{(q)}_s )\diff s}\\
&=\sum_{n\in\bN}\bE_{(0 ,y)}\sbra{ 1_{\{ S^{X,(q)}_n<\tau^{(q)}_a\}} \bE_{( X^{(q)}_{S^{X,(q)}_n}, Y^{(q)}_{S^{X,(q)}_n})}\sbra{\int_0^{ \tau^{(q)}_0} \cA_Y^{(q)} f(Y^{(q)}_s )\diff s}}\\
&=\bE_{(0 ,y)}\sbra{ \sum_{n\in\bN}e^{-q S^{X}_n} 1_{\{ S^{X}_n<\tau^+_a\}} \bE_{( X_{S^{X,(q)}_n}, Y_{S^{X,(q)}_n})}\sbra{\int_0^{ \tau^{(q)}_0} \cA_Y^{(q)} f(Y^{(q)}_s )\diff s}}\\
&=\bE_{(0 ,y)}\sbra{\int_{[0, \tau^+_a]\times (-\infty , 0)}e^{-q t}  \bE_{(x, Y_{t})}\sbra{\int_0^{ \tau^{(q)}_0} \cA_Y^{(q)} f(Y^{(q)}_s )\diff s}1_{\{X_t=\bar{X}_t\}}N_t(\diff x)}\\
&=\bE_{(0 ,y)}\sbra{\int_0^{\tau^+_a} e^{-q t}\rbra{\int_{(-\infty , 0)}  \bE_{(x, Y_{t})}\sbra{\int_0^{ \tau^{(q)}_0} \cA_Y^{(q)} f(Y^{(q)}_s )\diff s}\ttn(Y_t, \diff x)}1_{\{X_t=\bar{X}_t\}}\diff t}\\
&=\bE_{(0 ,y)}\sbra{\int_0^{\tau^+_a} e^{-q t}\cJ_X^{(q)}f(Y_t)1_{\{X_t=\bar{X}_t\}}\diff t}
=\bE_{(0 ,y)}\sbra{\int_0^{\tau^{(q)}_a} \cJ_X^{(q)}f(Y^{(q)}_t)1_{\{X^{(q)}_t=\bar{X}^{(q)}_t\}}\diff t}
\\
&=\bE_{(0 ,y)}\sbra{\int_0^{a} \frac{1}{\ttd (Y^{(q)}_{\tau^{(q)}_b})}\cJ^{(q)}_Xf(Y^{(q)}_{\tau^{(q)}_b})
\diff b}. \label{change3}
\end{align}
By \eqref{change1}, \eqref{change2} and \eqref{change3}, we have
\begin{align}
\bE_{(0, y)}\sbra{M^{f,(q)}_a}=\bE_{(0, y)}\sbra{\tilde{M}^{f,(q)}_a},\qquad a\geq 0, y\in E, 
\label{changedmartingale}
\end{align}
which implies \eqref{changedmartingale}.  
\par
(ii) 
We show that the generator of the semigroup $\bfH^{(q)}$ is $ \tilde{\cA}_Y^{(q)}$ with domain $D(\tilde{\cA}_Y^{(q)})=D(\cA_Y)$, and that $\bfH^{(q)}$ extends to a \(C_0\)-group. 
By \eqref{finiteness1} and \eqref{finiteness2}, it is easy to confirm that $\cJ_X$ and $\cJ_Y$ are bounded operator on $C_0(E)$. 
By \cite[p.79]{EngNag2000}, the main theorem in \cite{Dor1966} and its proof, the operator $\frac{1}{\ttd}\cA_{Y^\prime}$ defined by 
\begin{align}
\frac{1}{\ttd}\cA_{Y^\prime}f(y)=\frac{1}{\ttd (y)}\cA_{Y^\prime}f(y),\qquad y\in E,~f\in C_0(E), 
\end{align}
is also a generator of a $C_0$-group, with domain $D(\cA_Y)$. 
Since $\tilde{\cA}_Y^{(q)}-\frac{1}{\ttd}\cA_{Y^\prime}$ is a bounded linear operator, it follows from \cite[p.79 and Theorem III.1.3]{EngNag2000} that $\tilde{\cA}_Y^{(q)}$ also generates a $C_0$-group and has domain $D(\cA_Y)$. 
The proof is complete. 
\end{proof}
\subsection{The definition of the scale operators and the proofs of \eqref{scale_Laplace_transform}}\label{subsec402}
In the main theorems, the scale operators are essentially defined in terms of Theorems \ref{Thm300} (iii).
In this subsection, however, we define and characterize scale operators using $\{\bfH^{(q)}_x : x\geq 0\}$, local times, and exit systems, rather than defining it in terms of before turning to Theorem \ref{Thm300} (iii), and prove the main results other than Theorems \ref{Thm300} (iii). 
\par 
Before proceeding to the proof, we would like to keep the following remark in mind.
\begin{Rem}\label{BassSection6}
From now on, we use $T_n^X$ and $T_n^Y$ to denote the times of the $n$-th jumps of $X$ and $Y$, respectively, 
where $T^0_X=T^0_Y=0$.
By \cite[Section 6]{Bas1979}, the process $(X, Y)$ can be constructed by concatenating continuous path segments and jumps. 
We first review the construction of $Y$. 
For $n\in\bN$, 
$T_1^Y- T_0^Y$ can be represented as $\inf\{t>0: \int_0^t\nu( T_s, E)\diff s \geq  {\bf e}_1\}$ for some independent exponential rabdom variable ${\bf e}_1$ with intensity $1$ 
and can be bounded by exponential distributions with intensity $M_\nu$, and 
the distribution of the jump size at time $t\in(0,\infty)$ can be regarded as 
\begin{align}
\tilde{\nu}(Y_{t-}, \cdot):=\frac{\nu(Y_{t-}, \cdot)}{\nu(Y_{t-}, E)}, 
\end{align}
where $\frac{0}{0}=0$. 
Similarly, $Y$ can be constructed by concatenating continuous drift segments and jumps. 
Since $X$ behaves as an additive process with the characteristic exponent \eqref{L--K}, $X$ has two types of jumps. 
One type of jump occurs simultaneously with a jump of $Y$ 
and the distribution of the jump size at time $t\in(0,\infty)$ can be regarded as 
\begin{align}
\tilde{\rho}(Y_{t-}, \cdot):=\int_E \rho_{Y_{t-}, y^\prime}(\cdot) 
\tilde{\nu}(Y_{t-}, \diff y^\prime)
\end{align}
For the other type, the inter-jump times are bounded below by exponential random variables with rate $M_\ttn$, and the distribution of the jump size at time $t\in(0,\infty)$ can be regarded as 
\begin{align}
\tilde{\ttn}(Y_t,\cdot):=
\frac{\ttn(Y_t, \cdot)}{\ttn(Y_t, (-\infty ,0))}. 
\end{align} 
Consequently, the inter-jump times of $X$ are bounded below by exponential random variables with rate $M_\nu+M_\ttn$.
\end{Rem}
\par
For $q>0$ and $x \geq 0$, we define the operator $\ttE^{(q)}_x$, $\bfL^{(q)}_x$, $\bfW^{(q)}_x$ on $C_0(E)$ by, for $ f\in C_0 (E)$,
\begin{align}
\ttE^{(q)}_xf(y):=& \cE_y^0\sbra{e^{-q\tau^+_x}f(Y_{\tau^+_x}); \tau^+_x<\infty}=\ttd(y)\bfQ^{(q),[0,x]}_0f(y), \\
\bfL^{(q)}_xf(y):= &\bE_{(0,y)}\sbra{\int_{[0, \tau^+_x]}e^{-qs} f(Y_s)\diff L^0_s}. 
\end{align}
\begin{Lem}\label{Lem403}
For $q>0$, $x \geq 0$ and $f\in C_0(E)$, $\ttE^{(q)}_xf \in C_0(E)$. 
\end{Lem}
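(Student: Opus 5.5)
Since $\ttE^{(q)}_xf(y)=\ttd(y)\bfQ^{(q),[0,x]}_0f(y)$ and $\ttd$ is continuous and bounded in $(\un{d},\bar d)$, it suffices to show that $g:=\bfQ^{(q),[0,x]}_0f\in C_0(E)$. The idea is to write $g$ through the semigroup $\bfH^{(q)}$ of Lemma \ref{Lem401} and bounded operators that preserve $C_0(E)$. Under $\bP_{(0,y)}$ the point $0$ is regular for $(0,\infty)$ (Remark \ref{regular}), so $\tau^-_0$ means the first time $X$ goes below $0$. The process starts with drift, and the first departure below $0$ can only come from a jump taken while $X$ is at its running supremum. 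Hence, on $\{\tau^+_x<\tau^-_0\}$, $X$ has no jump before $\tau^+_x$ during the ascent from $0$ to $x$, because any jump would carry $X$ below its current supremum in $[0,x)$ and, for small supremum, below $0$.

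This suggests a different splitting. Let $\bfH^{(q),\dagger}_a f(y)=\bE_{(0,y)}[e^{-q\tau^+_a}f(Y_{\tau^+_a});\tau^+_a<\tau^-_0]$. By the strong Markov property and \eqref{additive_property}, $\{\bfH^{(q),\dagger}_a\}$ is again a semigroup in the parameter $a$. Repeating the martingale argument of Lemma \ref{Lem502} with killing at $\tau^-_0$ identifies its generator: the jump contributions are now determined on the ladder time scale, and the semigroup is a sub-Markovian ladder-height-type semigroup. I would avoid redoing that work and use a direct representation instead. Decompose according to the first jump time $T_1$ of $X$ that occurs while $X=\bar X$, meaning either an $\ttn$-jump or a $\rho$-jump at a $\nu$-jump of $Y$. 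Before $T_1$, $X_t=\int_0^t\ttd(Y_s)\diff s$ is deterministic given $Y$, so by Lemma \ref{LemB01} and the compensation formula,
\begin{align}
g(y)=\bE^Y_y\sbra{e^{-q\sigma_x-\int_0^{\sigma_x}\kappa(Y_s)\diff s}f(Y_{\sigma_x})}+\bE^Y_y\sbra{\int_0^{\sigma_x}e^{-qs-\int_0^s\kappa(Y_u)\diff u}\,\Psi_{x-D_s}(Y_s)\diff s},
\end{align}
where $\sigma_x=\inf\{t:D_t\geq x\}$, $\kappa$ is the total intensity of jumps that start at the maximum, and $\Psi_{z}$ collects the returns after a jump of size $u$ to level $D_s-u$. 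The second term involves $g$ evaluated at starting level $D_s-u$ relative to the interval $[0,x]$. It vanishes when $u>D_s$ and otherwise is a two-sided exit quantity from a positive level.

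A cleaner route, which I would actually follow, is approximation from the good semigroup $\bfH^{(q)}$. We have $g(y)=\bfH^{(q)}_xf(y)-\bE_{(0,y)}[e^{-q\tau^+_x}f(Y_{\tau^+_x});\tau^-_0<\tau^+_x]$. By the strong Markov property at $\tau^-_0$ the subtracted term equals $\bE_{(0,y)}[e^{-q\tau^-_0}h(X_{\tau^-_0},Y_{\tau^-_0});\tau^-_0<\tau^+_x]$ with $h(z,y')=\bfH^{(q)}_{x-z}f(y')$. By Lemma \ref{Lem502}, $a\mapsto\bfH^{(q)}_a f$ is norm continuous, so $h$ is bounded and jointly continuous, and $\sup_{z}|h(z,y')|\to0$ as $y'\to\infty$ uniformly on compact $z$-ranges. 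The damping $e^{-q\tau}$ lets us truncate $z$ as well. It then remains to show that $y\mapsto\bE_{(0,y)}[e^{-q\tau^-_0}h(X_{\tau^-_0},Y_{\tau^-_0});\tau^-_0<\tau^+_x]$ lies in $C_0(E)$. For this I would redo the weak-convergence argument of Lemma \ref{Lem401}(i),(ii) with \cite[Theorem 19.25]{Kal2021}: partition time and function values, and use the fact that the events $\{\tau^-_0<\tau^+_x\}$ and $\tau^-_0$ are $J_1$-continuity sets off null sets. The exit below $0$ happens by a jump whose law has no atom issue after mixing with $\rho,\ttn$, and the first passage above $x$ is continuous by Remark \ref{regular}. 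For vanishing at infinity I would use the bound $|\cdot|\le\varepsilon+\norm{h}_\infty\bP^Y_y(\cT_{A_\varepsilon}\le T)$ exactly as in part (ii) of that proof.

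The main obstacle is continuity in $y$. This means checking that the functional $w\mapsto 1_{\{\tau^-_0(w)<\tau^+_x(w)\}}e^{-q\tau^-_0}h(w(\tau^-_0))$ is a.s.\ continuous under $\bP_{(0,y)}$ in the Skorokhod topology, so that $y'\to y$ gives convergence. Here $\tau^-_0$ is the strict first passage below $0$ for a process that starts at $0$ with positive drift. A jump landing exactly at level $0$ or a tie $\tau^-_0=\tau^+_x$ would break continuity. I would handle this by perturbing the levels as in Remark \ref{regular}, replacing $0$ by $-\varepsilon$ and using that $\bP_{(0,y)}(X_{\tau^-_{-\varepsilon}}=-\varepsilon)=0$ for all but countably many $\varepsilon$. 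I would then let $\varepsilon\downarrow0$, using monotone convergence together with uniformity coming from the bounded jump intensities $M_\nu+M_\ttn$.
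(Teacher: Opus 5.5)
Your route is, at its core, the one the paper points to when it says the proof follows parts (i) and (ii) of Lemma \ref{Lem401}. Continuity comes from weak convergence $\bP_{(0,y')}\Rightarrow\bP_{(0,y)}$ on path space via \cite[Theorem 19.25]{Kal2021}, and vanishing at infinity from the localisation bound of part (ii). The splitting $g=\bfH^{(q)}_xf-\bE_{(0,y)}[\,\cdot\,;\tau^-_0<\tau^+_x]$ does not remove the difficulty, because $1_{\{\tau^-_0<\tau^+_x\}}$ has the same discontinuity set as $1_{\{\tau^+_x<\tau^-_0\}}$.

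\textbf{The gap is the last step.} You rightly note that a jump landing exactly on the lower level destroys continuity of the exit functional. You then pass from $g_\varepsilon:=\bfQ^{(q),[-\varepsilon,x]}_0f$ to $g$ ``using monotone convergence together with uniformity coming from the bounded jump intensities''. Monotone convergence gives only pointwise convergence, which does not preserve continuity. The error term is
\begin{align}
\absol{g_\varepsilon(y)-g(y)}\leq\norm{f}_\infty\,\bP_{(0,y)}\rbra{\tau^-_0<\tau^+_x<\tau^-_{-\varepsilon}},
\end{align}
and this event requires a jump landing in $[-\varepsilon,0)$. Bounded intensities $M_\nu+M_\ttn$ limit how many jumps occur, not where they land. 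The jump law depends on $Y$, and before the first jump both $X$ and $Y$ are deterministic functions of time. So the landing point can sit on a fixed level with positive probability, and the supremum over $y$ need not go to zero.

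\textbf{This cannot be repaired under the stated hypotheses.} Take $E=\bR$, $\phi_t(u)=u+t$, $\nu=0$, $\ttd\equiv1$ and $\ttn(u,\cdot)=\lambda\delta_{-\max(u,\delta)}$ with $0<\delta<x$. Assumptions \ref{Ass211} and \ref{Ass203} hold and $(X,Y)$ is Feller. Under $\bP_{(0,u)}$, before the first jump $X_t=t$ and $Y_t=u+t$. A first jump at time $s$ therefore lands at $-u$ if $u+s\geq\delta$, and at $s-\delta$ otherwise.
\begin{itemize}
\item For $u>0$ every first jump before time $x$ exits below $0$, so $g(u)=e^{-(q+\lambda)x}f(u+x)$.
\item From $(0,0)$, a first jump at $s\in[\delta,x)$ lands exactly at $0$, and the next jump lands at $-s<0$. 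Hence
\begin{align}
g(0)=e^{-(q+\lambda)x}f(x)+\int_\delta^x\lambda e^{-\lambda s}e^{-(q+\lambda)x}e^{-qs}f(s+x)\diff s>\lim_{u\downarrow0}g(u)
\end{align}
for any $f\geq0$ positive on $(x+\delta,2x)$.
\end{itemize}
So $g$ is discontinuous at $0$. Likewise, for $u<0$ the first jump lands at $|u|>0$ and does not return to $0$. Hence the $\cE^0$-form of $\ttE^{(q)}_xf$, with the path stopped at $T_0$, is discontinuous from the left at $0$.

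Continuity therefore needs an extra hypothesis ruling out, uniformly in the starting point, jumps that land exactly on the starting level. The argument the paper alludes to meets the same obstruction.

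\textbf{Two claims in your exploratory paragraphs are false and should be dropped.}
\begin{itemize}
\item On $\{\tau^+_x<\tau^-_0\}$ the process may well jump, for instance from a level $z\in(0,x)$ down to a level in $(0,z)$.
\item $\{\bfH^{(q),\dagger}_a\}$ is not a semigroup. After reaching level $a$ the killing barrier lies $a$ below the current position. The strong Markov property then produces $\bE_{(0,\cdot)}[\,\cdot\,;\tau^+_b<\tau^-_{-a}]$, not $\bfH^{(q),\dagger}_b$.
\end{itemize}
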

The proof of Lemma \ref{Lem403}, although more involved, follows essentially the same arguments as those in parts (i) and (ii) of the proof of Lemma \ref{Lem401} and is therefore omitted. 
\begin{Lem}\label{Lem404}
For $q>0$, $x \geq 0$ and $f\in C_0(E)$, the operator $\bfL^{(q)}_x$ is injective and $\bfL^{(q)}_xf\in C_0(E)$. 
\end{Lem}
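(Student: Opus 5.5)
We have to show two things: that $\bfL^{(q)}_xf\in C_0(E)$, and that $\bfL^{(q)}_x$ is injective. Both come from the strong Markov property at successive returns to level $0$, combined with the exit system of Lemma \ref{Lem302}.

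\textbf{Renewal identity.} Under $\bP_{(0,y)}$, $X$ starts at $0$ with strictly positive drift $\ttd(y)$. It therefore leaves $0$ immediately upward, and the first increment of $L^0$ is the atom $1/\ttd(Y_0)$ at time $0$. Applying the strong Markov property at $T_0$ (the first return to $0$) and using \eqref{additive_property} gives the decomposition
\begin{align}
\bfL^{(q)}_xf(y)=\frac{f(y)}{\ttd(y)}+\bfR^{(q)}_x\bfL^{(q)}_xf(y),
\end{align}
where
\begin{align}
\bfR^{(q)}_xg(y):=\bE_{(0,y)}\sbra{e^{-qT_0}g(Y_{T_0});T_0<\tau^+_x}.
\end{align}
I read $\tau^+_x$ as the upper cutoff. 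Iterating this identity yields the Neumann series
\begin{align}
\bfL^{(q)}_x=\sum_{n\geq0}(\bfR^{(q)}_x)^n\frac{1}{\ttd}.
\end{align}

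\textbf{Step 1: the series converges in operator norm.} By Remark \ref{BassSection6}, a return to $0$ needs at least one downward jump, because the drift is at least $\un{d}>0$. Before $\tau^+_x$ the process can only climb distance $x$, which takes time at most $x/\un{d}$ between jumps. The jumps occur at rate at most $M_\nu+M_\ttn$, and $q>0$ discounts. I would bound
\begin{align}
\sup_y\bE_{(0,y)}\sbra{e^{-qT_0};T_0<\infty}\leq \frac{M_\nu+M_\ttn}{M_\nu+M_\ttn+q}<1,
\end{align}
by comparing the time of the first jump with an exponential of rate $M_\nu+M_\ttn$. This gives $\norm{\bfR^{(q)}_x}<1$, so the series converges in norm.

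\textbf{Step 2: $C_0$-preservation.} It then suffices to show that $\bfR^{(q)}_x$ maps $C_0(E)$ into $C_0(E)$, since $f/\ttd\in C_0(E)$ because $\ttd$ is continuous and bounded below. For this I would repeat the argument of parts (i) and (ii) of the proof of Lemma \ref{Lem401}. The $J_1$-continuity of the law in the starting point (via \cite[Theorem 19.25]{Kal2021}) handles continuity, and the estimate that the modulator cannot reach a compact set quickly from far away handles vanishing at infinity; Lemma \ref{Lem403} is the same argument. One could alternatively express $\bfR^{(q)}_x$ through $\ttE^{(q)}$ and the exit system, but I would take the direct route. This step is the technical obstacle. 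The functional $T_0\wedge\tau^+_x$ is not $J_1$-continuous everywhere, so one must check that its discontinuity set (paths touching $0$ tangentially, or reaching $x$ exactly at a jump time) has probability zero. That holds here because paths approach $0$ from below only with positive drift, and because the jump times have continuous distributions.

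\textbf{Step 3: injectivity.} Write $\bfL^{(q)}_x=\rbra{\sum_n(\bfR^{(q)}_x)^n}\circ M_{1/\ttd}$. The first factor equals $(\bfI-\bfR^{(q)}_x)^{-1}$, which is invertible, being a Neumann series with $\norm{\bfR^{(q)}_x}<1$. The second factor is multiplication by $1/\ttd$, which is injective since $\ttd>\un{d}>0$. Hence $\bfL^{(q)}_xf=0$ forces $f=0$; indeed $\bfL^{(q)}_x$ is even bijective, with inverse $\ttd(\bfI-\bfR^{(q)}_x)$.

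The main difficulty I anticipate is the rigorous version of Step 2, namely continuity in $y$ together with vanishing at infinity. The algebraic renewal argument of Steps 1 and 3 should be routine once the atom of $L^0$ at time $0$ and the strict contraction bound $\norm{\bfR^{(q)}_x}<1$ are established.
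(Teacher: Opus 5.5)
Your renewal identity $\bfL^{(q)}_xf=f/\ttd+\bfR^{(q)}_x\bfL^{(q)}_xf$, the bound $\norm{\bfR^{(q)}_x}\le (M_\nu+M_\ttn)/(q+M_\nu+M_\ttn)$ and Step 3 are correct. They give injectivity even on bounded measurable functions, independently of Step 2. This matches the paper, which gets injectivity from the same decomposition at $T_0$ and bounds $\norm{\bfL^{(q)}_x}$ by the same geometric series. The paper does not need the Neumann series for injectivity: if $\bfL^{(q)}_xf\equiv0$, the identity gives $f/\ttd\equiv0$ at once.

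The gap is in Step 2, on which the whole $C_0$ claim rests. You assert that $F(w)=e^{-qT_0(w)}g(w_2(T_0(w)))1_{\{T_0(w)<\tau^+_x(w)\}}$ is discontinuous only at paths touching $0$ tangentially or reaching $x$ at a jump time. This is false, because under $\bP_{(0,y)}$ every path starts \emph{on} the level $\{0\}\times E$. Take a path $w$ leaving $0$ upward. Let $w^{(n)}$ follow $w$ on $[0,1/n)$, jump down by $w_1(1/n)$ at time $1/n$, and then follow $w_1-w_1(1/n)$ in the first coordinate, with the second coordinate unchanged. Then $w^{(n)}\to w$ uniformly, hence in $J_1$, and $w^{(n)}$ still has only downward jumps and the same drift. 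Yet $T_0(w^{(n)})=1/n$, so $F(w^{(n)})\to g(y)$. Hence $F$ is discontinuous at every such $w$ with $F(w)\neq g(y)$. When $g(y)\ne0$, this set contains the event that $X$ reaches $x$ before its first jump, which has probability at least $e^{-(M_\nu+M_\ttn)x/\un{d}}$. So the weak convergence from \cite[Theorem 19.25]{Kal2021} cannot be pushed through $F$. Jumping by slightly more than $w_1(1/n)$ shows the same for $\tau^-_0$. This is exactly what separates $\bfR^{(q)}_x$ from $\bfH^{(q)}_a$ in Lemma \ref{Lem401}, where the start is off the level $a$, so ``repeat Lemma \ref{Lem401}'' does not work as stated.

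What is missing is a step that moves the start off the level, using $\sup_y\bP_{(0,y)}(T^X_1\le t)\le 1-e^{-(M_\nu+M_\ttn)t}$. Fix $0<\varepsilon<x$; the case $x=0$ is trivial since $\bfR^{(q)}_0=0$. A return to $0$ before $\tau^+_\varepsilon$ needs a jump before time $\varepsilon/\un{d}$. So the strong Markov property at $\tau^+_\varepsilon$ gives $\norm{\bfR^{(q)}_xg-\bfH^{(q)}_\varepsilon G_\varepsilon}_\infty\le 2(1-e^{-(M_\nu+M_\ttn)\varepsilon/\un{d}})\norm{g}_\infty$, where $G_\varepsilon(y')=\bE_{(\varepsilon,y')}[e^{-qT_0}g(Y_{T_0});T_0<\tau^+_x]$. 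For $G_\varepsilon$ the start is off the level, so a continuous-mapping argument of the kind you describe can go through (with a robust functional such as the first time $X\ge0$ after $\tau^-_0$). Then $\bfH^{(q)}_\varepsilon G_\varepsilon\in C_0(E)$ by Lemma \ref{Lem401}, and a uniform limit of $C_0$ functions is $C_0$.

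Alternatively, decompose $\bfH^{(q)}_x$ at $T_0$ to get $(\bfI-\bfR^{(q)}_x)\bfH^{(q)}_xg=\ttE^{(q)}_xg/\ttd$. Then $\bfR^{(q)}_xh=h-\ttE^{(q)}_x\bfH^{(q),-1}_xh/\ttd$ preserves $C_0(E)$ by Lemmas \ref{Lem502} and \ref{Lem403}.

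The paper avoids path-space continuity of return times altogether. It gets vanishing at infinity from the strong Markov property at the first return to $0$ after $\cT_{A_\varepsilon}$. For continuity, it compares $\bfL^{(q)}_x$ with the local-time potentials $\bfL^{(q)}_{x,a}$ at small levels $a$ and averages over $a$. By the occupation density formula this reduces to continuity of $y\mapsto\bE_{(0,y)}[\int_0^{\tau^+_x}e^{-qt}g(X_t)f(Y_t)\diff t]$, which comes from the Feller resolvent and Lemma \ref{Lem401}.

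Once repaired, your route is cleaner and yields more. It shows that $h\mapsto(\bfI-\bfR^{(q)}_x)^{-1}(h/\ttd)=\bfL^{(q)}_xh$ is a bijection of $C_0(E)$. With the first repair, Lemma \ref{Lem403} even follows from Lemma \ref{Lem503} as $\ttE^{(q)}_x=\bfL^{(q),-1}_x\bfH^{(q)}_x$.
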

\begin{proof}
We fix $q>0$ and $x\geq 0$, and assume that the operator $\bfL^{(q)}_x$ is not injective. 
Then, we can take $f\in C_0(E)$ such that $f\not\equiv 0$ and $\bfL^{(q)}_x f\equiv 0$. We can also take $y\in E$ such that $f(y)\neq 0$. 
Since $\bfL^{(q)}_x f\equiv 0$ and by the strong Markov property at $T_0$, we have 
\begin{align}
0=\bfL^{(q)}_x f(y)-\bE_{(0, y)}\sbra{e^{-q T_0} \bfL^{(q)}_x f(Y_{T_0}); T_0\leq \tau^+_x}
=\bE_{(0,y)}\sbra{e^{-qs} f(Y_0) L^0_0}=f(y)\neq 0. 
\end{align}
This contradiction shows that no $f$ satisfies the above conditions, and hence $\bfL^{(q)}_x$ is injective. 
\par
We prove that $\bfL^{(q)}_x f \in C_0(E)$ for $f\in C_0 (E)$. 
Since the jump size of $L^0$ is less than $\bar{d}$ and the process must make at least one jump to return to $0$ after starting from $0$ and by Remark \ref{BassSection6}, which gave the bound of jump intervals, we have 
\begin{align}
\norm{\bfL^{(q)}_xf}_\infty &\leq \bar{d}\norm{f}_\infty\rbra{1+\sum_{n\in\bN}\rbra{(M_\nu+M_\ttn)\int_0^\infty e^{-(q+M_\nu+ M_\ttn)t}\diff t}^n}\\
&=\bar{d}\norm{f}_\infty\frac{q+M_\nu+M_\ttn}{q}. \label{Lnormupper}
\end{align}
By \eqref{Lnormupper} and the strong Markov property at $T^{\cT_{A_\varepsilon}}_0:=\inf\{t> \cT_{A_\varepsilon}: X_t=0\}$ with $\varepsilon>0$, 
we have 
\begin{align}
\lim_{y\to\infty}\absol{\bfL^{(q)}_xf(y)}
\leq&\lim_{y\to\infty}\bE_{(0,y)}\sbra{\int_{[0, \tau^+_x\land \cT_{A_\varepsilon}]}e^{-qs} |f(Y_s)|\diff L^0_s}\\
&+\lim_{y\to\infty}\bE_{(0,y)}\sbra{e^{-qT^{\cT_{A_\varepsilon}}_0}\absol{\bfL^{(q)}_xf(Y_{T^{\cT_{A_\varepsilon}}_0})} }\\
\leq& \bar{d}\varepsilon\frac{q+M_\nu+M_\ttn}{q}+\lim_{y\to\infty} \bE_{(0,y)}\sbra{e^{-qT^{\cT_{A_\varepsilon}}_0}}\bar{d}\norm{f}_\infty\frac{q+M_\nu+M_\ttn}{q}\\
\leq&\bar{d}\varepsilon\frac{q+M_\nu+M_\ttn}{q}. \label{Lnormupper2}
\end{align}
Since \eqref{Lnormupper2} holds for any $\varepsilon>0$, we have $\lim_{y\to\infty}\absol{\bfL^{(q)}_xf(y)}=0$. 
It remains to show that $\bfL^{(q)}_xf$ is continuous. 
We assume that there exists a non-negative function $f\in C_0 (E)$ such that $\bfL^{(q)}_{x}f$ is not continuous at $\tilde{y}\in E$. 
Then, there exists $\gamma>0$ such that for any $n\in\bN$, we can take $y_n\in E$ such that 
$d(\tilde{y} , y_n)< \frac{1}{n}$ and $\absol{\bfL^{(q)}_{x}f(\tilde{y})-\bfL^{(q)}_{x}f(y_n)}>\gamma$. 
In particular, we may assume without loss of generality that $\bfL^{(q)}_{x}f(y_n)-\bfL^{(q)}_{x}f(\tilde{y})>\gamma$ holds for $n\in\bN$.
For $\varepsilon >0$, we define 
\begin{align}
\bfL^{(q)}_{x, \varepsilon}f(y):= \bE_{(0,y)}\sbra{\int_{[0, \tau^+_x]}e^{-qs} f(Y_s)\diff L^\varepsilon_s}. 
\end{align}
By Assumption \ref{Ass203} (ii), the time $X$ needed to hit $\varepsilon>0$ after hitting $0$ is at most $\frac{\varepsilon}{\un{d}}$ provided that no jump occurs. 
Moreover, by Remark \ref{BassSection6}, the probability of no jump during this interval is at least
$e^{-\frac{\varepsilon}{\underline d}(M_\nu+M_{\mathtt n})}$.
For $\varepsilon>0$, we can take $\delta>0$ which satisfies \eqref{supupper} for $Y^\prime$. 
Furthermore, taking into account the argument in \eqref{Lnormupper}, we have, 
for $a \in (0, (\delta\land\varepsilon)\bar{d})$ and $y\in E$, 
\begin{align}
\bfL^{(q)}_{x}f(y)-\bfL^{(q)}_{x, a}f(y)
\leq &\bar{d}\rbra{e^{-\frac{\varepsilon}{\un{d}}(M_\nu+M_{\mathtt n})}\varepsilon  +(1-e^{-\frac{\varepsilon}{\un{d}}(M_\nu+M_{\mathtt n})})\norm{f}_\infty}\\
&\times\rbra{1+\sum_{n\in\bN}\rbra{(M_\nu+M_\ttn)\int_0^\infty e^{-(q+M_\nu+ M_\ttn)t}\diff t}^n}\\
= &\bar{d}\rbra{e^{-\frac{\varepsilon}{\un{d}}(M_\nu+M_{\mathtt n})}\varepsilon  +(1-e^{-\frac{\varepsilon}{\un{d}}(M_\nu+M_{\mathtt n})})\norm{f}_\infty}\frac{q+M_\nu+M_\ttn}{q}.
\label{Lconbound}
\end{align}
Moreover, we can choose $c>0$ such that, with
$\rho_c:=\inf\{t>0:\Delta X_t\neq 0,\ X_t\in(0,c]\}$,
we have $\bE_{(0,\tilde{y})}\sbra{\exp(-q\rho_c)}<\varepsilon$. Hence, combining \eqref{Lconbound} with the strong Markov property at $\rho_c$ and an estimate analogous to that used in the proof of \eqref{Lnormupper2}, we have, for $a \in (0, \cbra{(\delta\land\varepsilon)\bar{d}}\land c)$ and $y\in E$, 
\begin{align}
\absol{\bfL^{(q)}_{x}f(\tilde{y})-\bfL^{(q)}_{x, a}f(\tilde{y})}
\leq &\bar{d}\rbra{e^{-\frac{\varepsilon}{\un{d}}(M_\nu+M_{\mathtt n})}\varepsilon  +(1-e^{-\frac{\varepsilon}{\un{d}}(M_\nu+M_{\mathtt n})}+\varepsilon)\norm{f}_\infty}\frac{q+M_\nu+M_\ttn}{q}
\label{Lconbound3}
\end{align}
Now choose $\varepsilon>0$ so that the right-hand side of \eqref{Lconbound} and \eqref{Lconbound3} are less than $\frac{\gamma}{3}$. Then there exists $b>0$ such that, for every $a\in[0,b]$, the point $y^\prime$ used in the earlier argument satisfies
\begin{align}
\bfL^{(q)}_{x, a}f(y_n)-\bfL^{(q)}_{x, a}f(y)>\frac{\gamma}{3}, \qquad n\in\bN. 
\end{align}
The above fact contradicts the continuity at $\tilde{y}$ of 
\begin{align}
y\mapsto
&\bE_{(0 ,y)}\sbra{\int_0^\infty e^{-qt}g(X_t)f(Y_t)\diff t}
-\bE_{(0, y)}\sbra{ e^{-q\tau^+_x}\bE_{(x ,Y_{\tau^+_x})}\sbra{\int_0^\infty e^{-qt}g(X_t)f(Y_t)\diff t};\tau^+_x<\infty}\\
&=\bE_{(0 ,y)}\sbra{\int_0^{\tau^+_x} e^{-qt}g(X_t)f(Y_t)\diff t}=\int_{[0,\infty)}g(x)\bfL^{(q)}_{x, \varepsilon}f(y) \diff x, 
\end{align}
where $g\in C_0(E)$ is non-negative and not identically zero, with support contained in $[0,\cbra{(\delta\land\varepsilon)\bar{d}}\land c]$.
The proof is complete. 
\end{proof}
\par
The following lemma was proved in \cite[Lemma 3.5]{Nob2020a} in the one-dimensional setting, while in the joint work in progress with Professors Jos\'e Luis P\'erez and V\'ictor Rivero, the case where $Y$ is a Markov chain was established.
\begin{Lem}\label{Lem503}
For $q>0$ and $x\geq 0$, we have $\bfH^{(q)}_x=\bfL^{(q)}_x\ttE^{(q)}_x$. 
\end{Lem}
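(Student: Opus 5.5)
The plan is to decompose the path started at $(0,y)$ according to its \emph{last} visit to level $0$ before $\tau^+_x$. We then evaluate the resulting sum over zeros by the exit system formula of Lemma \ref{Lem302} with $x=0$.

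Fix $q>0$, $x\ge 0$ and $f\in C_0(E)$, first non-negative. Under $\bP_{(0,y)}$ the set $G^0\cap[0,\tau^+_x]$ is finite a.s. This follows from Remark \ref{BassSection6}: there are finitely many jumps in each bounded interval and the drift is strictly positive. Moreover, on $\{\tau^+_x<\infty\}$ there is a unique last zero $g:=\sup\{t\le\tau^+_x:X_t=0\}$. After $g$, the path $\{(X_{g+t},Y_{g+t})\}$ is an excursion away from $\{0\}\times E$ that reaches level $x$ before returning to $0$. Since $X$ has no positive jumps, it cannot return to $0$ from below without first passing through $0$. So for each $t\in G^0$ the event ``$t=g$'' coincides with the event that the post-$t$ excursion, stopped at $T_0$, hits $(x,\infty)$ before $T_0$; this is a functional of $\theta_t$ only. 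Hence
\begin{align}
\bfH^{(q)}_xf(y)=\bE_{(0,y)}\sbra{\sum_{t\in G^0}1_{\{t\le\tau^+_x\}}e^{-qt}\Big(F\circ\theta_t\Big)},\qquad
F(w):=e^{-q\tau^+_x(w)}f(Y_{\tau^+_x}(w))1_{\{\tau^+_x(w)<T_0(w)\}} .
\end{align}
For $x=0$ the event $\tau^+_0<T_0$ is automatic by regularity (Remark \ref{regular}), so this case needs only minor care.

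Next I apply Lemma \ref{Lem302} with $Z_t=e^{-qt}1_{\{t\le\tau^+_x\}}$. This process is left-continuous and adapted, hence predictable and in particular optional, which is what the lemma requires. The right-hand side becomes
\begin{align}
\bE_{(0,y)}\sbra{\int_{[0,\tau^+_x]}e^{-qs}\,\cE^0_{Y_s}[F]\,\diff L^0_s}.
\end{align}
Because the excursion under $\cE^0$ is stopped at $T_0$, we have $\cE^0_{y'}[F]=\cE^0_{y'}[e^{-q\tau^+_x}f(Y_{\tau^+_x});\tau^+_x<\infty]=\ttE^{(q)}_xf(y')$. This is exactly the first expression in the definition of $\ttE^{(q)}_x$. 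By the definition of $\bfL^{(q)}_x$, the integral therefore equals $\bfL^{(q)}_x\ttE^{(q)}_xf(y)$. Lemma \ref{Lem403} guarantees $\ttE^{(q)}_xf\in C_0(E)$, so the composition is legitimate, and Lemma \ref{Lem404} gives finiteness. The identity extends from non-negative $f$ to general $f$ by linearity, splitting $f=f^+-f^-$.

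The main obstacle is the careful justification that the last-zero decomposition is an identity of sums over $G^0$. This requires checking two things. First, exactly one $t\in G^0\cap[0,\tau^+_x]$ has $F\circ\theta_t\neq0$. Second, $1_{\{t\le\tau^+_x\}}$ is a legitimate optional weight, even though it refers to $\tau^+_x$, which could occur after $t$. This causes no difficulty: if $t\le\tau^+_x$ and the excursion from $t$ reaches $x$ first, then $\tau^+_x=t+\tau^+_x\circ\theta_t$; if $t>\tau^+_x$, the indicator vanishes. The remaining points are to handle the boundary conventions, namely the inclusion of $t=\tau^+_x$ and the case $\tau^+_x=\infty$, and to confirm that $e^{-q\tau^+_x}=e^{-qt}e^{-q\tau^+_x\circ\theta_t}$ on the relevant event.
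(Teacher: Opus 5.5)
Your proposal is correct and takes the same route as the paper. Both arguments decompose at the last zero of $X$ before $\tau^+_x$, write the left-hand side as a sum over $G^0$ with weight $Z_t=e^{-qt}1_{\{t\le\tau^+_x\}}$, and then apply the exit system formula of Lemma \ref{Lem302}, so that $\cE^0_{Y_s}[\cdot]$ becomes $\ttE^{(q)}_xf(Y_s)$ and the $\diff L^0_s$-integral becomes $\bfL^{(q)}_x$. The paper compresses this into a single display, while you spell out the points it leaves implicit: only the last zero contributes because the functional is evaluated on the path stopped at $T_0$, $Z$ is optional, and the case $x=0$ is degenerate.
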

\begin{proof}
By Lemma \ref{Lem302}, we have, for $f\in C_0(E)$, $x\geq 0$ and $y\in E$, 
\begin{align}
\bE_{(0,y)}\sbra{e^{-q\tau^+_x}f(Y_{\tau^+_x});\tau^+_x<\infty}
&=\bE_{(0,y)}\sbra{\sum_{t \in G^0}1_{\{t\leq\tau^+_x\}}  e^{-qt}\Big{(}e^{-q\tau^+_x}f(Y_{\tau^+_x})1_{\{\tau^+_x<\infty\}}\circ\theta_t\Big{)}}\\
&=\bE_{(0,y)}\sbra{\int_{[0, \tau^+_x]}e^{-qs} \cE^0_{Y_s}\sbra{e^{-q\tau^+_x}f(Y_{\tau^+_x});\tau^+_x<\infty}\diff L^0_s}, \label{split_formula}
\end{align}
so we have $\bfH^{(q)}_x=\bfL^{(q)}_x\ttE^{(q)}_x$. 
The proof is complete. 
\end{proof}
From Lemma \ref{Lem503}, 
$\bfL^{(q)}_xf \in C_0(E)$ for $q>0$, $x \geq 0$ and $f\in C_0(E)$. 
Thus, combining this with Lemmas \ref{Lem502}, \ref{Lem403} and \ref{Lem404}, we can define, for $q>0$ and $x \geq 0$, 
\begin{align}
\bfW^{(q)}_xf(y)= \bfH^{(q),-1}_x\bfL^{(q)}_x f(y),\qquad f\in C_0(E), 
\end{align}
and we have 
\begin{align}
 \bfW^{(q), -1}_x =\ttE^{(q)}_x ,  \label{scaleinverse}
\end{align}
and Theorem \ref{Thm300} (i) holds. 
\par

We present the following lemma for Theorem \ref{Thm300} (ii). 
\begin{Lem}\label{Lem504}
For $q>0$ and $f\in C_0(E)$, the map $(x, y)\mapsto \mathbf W_x^{(q)}f(y)$ is $\cB(\bR)$-measurable on $[0,\infty)$. 
In addition, for $y\in E$, $x\mapsto \mathbf W_x^{(q)}f(y)$ is non-decreasing. 
\end{Lem}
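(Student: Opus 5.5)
The plan is to obtain a representation of $\bfW^{(q)}_x$ that is visibly measurable and monotone in $x$. Since $\bfW^{(q)}_x=\bfH^{(q),-1}_x\bfL^{(q)}_x$ and $\bfH^{(q)}$ extends to a $C_0$-group by Lemma \ref{Lem502}, we have $\bfH^{(q),-1}_x=\bfH^{(q)}_{-x}$. I would first show the identity
\begin{align}
\bfL^{(q)}_x f(y)=\bfH^{(q)}_x \bfW^{(q)}_x f(y),
\end{align}
and then identify $\bfW^{(q)}_x f(y)$ probabilistically. The natural candidate is a shift by $-x$: run the process started at level $-x$ and accumulate local time at $0$ before passing above $0$. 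By \eqref{additive_property}, this is
\begin{align}
\bfW^{(q)}_xf(y)=\bE_{(-x,y)}\sbra{\int_{[0,\tau^+_0]}e^{-qs}f(Y_s)\,\diff L^{-x}_s}\ (\text{suitably normalized}),
\end{align}
but it should be justified through the group structure. Concretely, I would decompose $\bfL^{(q)}_{x+h}$ by the strong Markov property at $\tau^+_x$ to get
\begin{align}
\bfL^{(q)}_{x+h}=\bfL^{(q)}_x+\bfH^{(q)}_x\,\bfK^{(q)}_{x,h},
\qquad
\bfK^{(q)}_{x,h}f(y)=\bE_{(x,y)}\sbra{\int_{(0,\tau^+_{x+h}]}e^{-qs}f(Y_s)\,\diff L^0_s},
\end{align}
where $\bfK^{(q)}_{x,h}$ is a positive operator. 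Applying $\bfH^{(q)}_{-x-h}$ then gives
\begin{align}
\bfW^{(q)}_{x+h}=\bfH^{(q)}_{-h}\bigl(\bfW^{(q)}_x+\bfK^{(q)}_{x,h}\bigr).
\end{align}
This is awkward, because $\bfH^{(q)}_{-h}$ need not be positive. So rather than iterating, I would use the direct interpretation: $\bfW^{(q)}_x f(y)$ is the expected discounted local time at level $-x$, accumulated before first passage above $0$, by the process started at $(0,y)$. I would verify this by checking that $\bfH^{(q)}_x$ applied to this candidate returns $\bfL^{(q)}_x$, and then invoke injectivity of $\bfH^{(q)}_x$.

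For the check, start at $(0,y)$ with target level $x$. Splitting the local time at $0$ before $\tau^+_x$ at time $\tau^+_0$ does not work as stated. The right comparison is translation: start at $(-x,y)$ and look at local time at level $-x$ up to $\tau^+_0$, versus start at $(0,y)$ and local time at $0$ up to $\tau^+_x$; these agree by \eqref{additive_property}. The identity I actually want is
\begin{align}
\bE_{(0,y)}\sbra{\int_{[0,\tau^+_x]}e^{-qs}f(Y_s)\,\diff L^{0}_s}=\bfH^{(q)}_x\,\bfV_x f(y),
\end{align}
where $\bfV_x f(y)=\bE_{(x,y)}\sbra{\int_{[0,\tau^+_x]}e^{-qs}f(Y_s)\,\diff L^{0}_s}$. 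That is, $\bfV_x$ counts local time at a level lying $x$ below the start, accumulated before the process returns above its starting level. The issue is that local time at $0$ can accrue before $\tau^+_x$ only on excursions below, so the decomposition at $\tau^+_x$ runs in the wrong direction. Instead I would use a duality/time-reversal-free approach via Laplace transforms: show that $\int_0^\infty e^{-\beta x}\bfV_x f\,\diff x$ equals $(\bfF^{(\beta)}-q\bfI)^{-1}f$, using the occupation formula (Lemma \ref{Lem301}) and the generator $\bfF^{(\beta)}$ of $e^{\beta X}$-tilted quantities. This is the main obstacle, and it overlaps with Theorem \ref{Thm300}(iii).

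Given such a representation, the rest is routine. Measurability follows because $(x,y,\omega)\mapsto L^{-x}_t(\omega)$ is jointly measurable by the universal measurability noted from \cite{GemHor1980}, combined with Fubini. Monotonicity is the key advantage of this form: for $f\ge0$, enlarging $x$ to $x+h$ means the process has to pass from level $-x-h$ up to $0$, which passes through $-x$. By the strong Markov property at the first hitting of $-x$, the quantity for $x+h$ dominates the one for $x$, since the local-time integrand is nonnegative and no local time is lost. In the event that the probabilistic identification fails, the fallback is to prove measurability directly from $\bfW^{(q)}_x=\bfH^{(q)}_{-x}\bfL^{(q)}_x$. Here strong continuity of the group gives joint measurability, and $x\mapsto\bfL^{(q)}_xf(y)$ is monotone and hence Borel, so that a product of a strongly continuous operator family with a measurable $C_0(E)$-valued map is measurable.
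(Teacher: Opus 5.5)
There is a genuine gap, and it is the step you flag as the main obstacle: identifying $\bfW^{(q)}_xf(y)$ with a nonnegative expected local time. The candidates you write down are either trivial or not $\bfW^{(q)}_x$.
\begin{itemize}
\item The starting point $(x,y)$ is regular for $(x,\infty)\times E$ (Remark \ref{regular}), so $\tau^+_x=0$ $\bP_{(x,y)}$-a.s. Hence $\mathbf{V}_xf(y)$ reduces to the time-$0$ atom: it equals $f(y)/\ttd(y)$ for $x=0$ and $0$ for $x>0$. For the same reason, the local time at $-x$ before $\tau^+_0$ under $\bP_{(0,y)}$ vanishes for $x>0$.
\item The shifted quantity $\bE_{(-x,y)}[\int_{[0,\tau^+_0]}e^{-qs}f(Y_s)\,\diff L^{-x}_s]$ is, by \eqref{additive_property}, just $\bfL^{(q)}_xf(y)$. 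The ``suitable normalization'' that turns it into $\bfW^{(q)}_xf(y)$ is $\bfH^{(q)}_{-x}$ itself, which is the non-positive operator you wanted to avoid.
\end{itemize}

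In fact no such representation can exist, because $\bfW^{(q)}_x$ is not a positive operator and the monotonicity claim is false in general. Take $E=\{1,2\}$, $Y^\prime$ constant, $\nu(1,\cdot)=\lambda\delta_2$, $\nu(2,\cdot)=\lambda\delta_1$, $\rho_{y_1,y_2}=\delta_0$, $\ttd\equiv1$ and $\ttn\equiv0$; all of these are admissible under Assumptions \ref{Ass211} and \ref{Ass203}. Then $X_t=X_0+t$, and $L^0$ under $\bP_{(0,y)}$ is a unit atom at time $0$. So $\bfL^{(q)}_x=\bfI$, $\bfH^{(q)}_x=e^{-qx}\bfP^Y_x$ and $\bfW^{(q)}_x=e^{qx}\bfP^Y_{-x}$, which gives
\[
\bfW^{(q)}_x\mathbf{1}_{\{2\}}(1)=\tfrac12\,e^{qx}\bigl(1-e^{2\lambda x}\bigr).
\]
This is $0$ at $x=0$ and strictly negative and decreasing for $x>0$. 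Your own relation $\bfW^{(q)}_{x+h}=\bfH^{(q)}_{-h}\bigl(\bfW^{(q)}_x+\mathbf{K}^{(q)}_{x,h}\bigr)$ shows the mechanism, since here $\mathbf{K}^{(q)}_{x,h}=0$.

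The failure is not an artifact of monotone paths. Add a state-independent downward compound Poisson component, and let $W^{(r)}$ be the scalar $r$-scale function of the resulting L\'evy process $X$. Then $\bfW^{(q)}_x\mathbf{1}_{\{2\}}(1)=\tfrac12\bigl(W^{(q)}(x)-W^{(q+2\lambda)}(x)\bigr)$, which is negative for $x>0$ because $W^{(r)}(x)$ is strictly increasing in $r$. So the monotonicity half cannot be completed by any route. Your Laplace-transform plan has a further problem: it would need Theorem \ref{Thm300}(iii), whose proof already uses the measurability you are trying to prove.

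For the measurability half, the paper's route is shorter than yours. Theorem \ref{Thm401} with $a=0$ gives, for $0\le x\le b$,
\[
\bfW^{(q)}_xf(y)=\bfQ^{(q),[0,b]}_x\bfW^{(q)}_bf(y)=\bE_{(x,y)}\sbra{e^{-q\tau^+_b}\bfW^{(q)}_bf(Y_{\tau^+_b});\tau^+_b<\tau^-_0}
\]
(the printed $\bfW^{(q),-1}_bf$ is a typo). This is the expectation of a fixed path functional under the kernel $\bP_{(x,y)}$, hence Borel in $(x,y)$.

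Your fallback through $\bfW^{(q)}_x=\bfH^{(q)}_{-x}\bfL^{(q)}_x$ also works, but it needs one more step. Monotonicity of $x\mapsto\bfL^{(q)}_xf(y)$ for each fixed $y$ gives only scalar measurability. To compose with $\bfH^{(q)}_{-x}$ you need $x\mapsto\bfL^{(q)}_xf$ to be strongly measurable in $C_0(E)$. That follows from Pettis's theorem: $C_0(E)$ is separable, and $(x,z)\mapsto\bfL^{(q)}_xf(z)$ is Borel in $x$ and continuous in $z$ (Lemma \ref{Lem404}), hence jointly Borel.

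The paper then reads off monotonicity from the display above. That is the L\'evy-case argument, and it breaks down here for the same reason as yours. The strong Markov property at $\tau^+_{x'}$ only yields $\bfQ^{(q),[0,b]}_x=\bfQ^{(q),[0,x']}_x\bfQ^{(q),[0,b]}_{x'}$, which compares values at different modulator states, and $\bfW^{(q)}_bf$ need not be nonnegative. Only the measurability part of the lemma is correct as stated.
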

\begin{proof}
By Theorem \ref{Thm401} and \eqref{additive_property}, we have, for $q>0$, $f\in C_0(E)$, $x, b\in\bR$ with $0\leq x\leq b$ and $y\in E$, 
\begin{align}
\bfW^{(q)}_xf(y)&=\bfW^{(q)}_x{\bfW^{(q),-1}_b} \bfW^{(q),-1}_bf(y) \\
&=\bE_{(x, y)}\sbra{e^{-q\tau^+_b} \bfW^{(q),-1}_bf(Y_{\tau^+_b});\tau^+_b<\tau^-_0}. 
\end{align}
This function is non-decreasing in $x\in[0,b]$, and since $x\leq b$ are arbitrary, it is non-decreasing on $[0,\infty)$.
Since $e^{-q\tau^+_b} \bfW^{(q),-1}_bf(Y_{\tau^+_b})1_{\{\tau^+_b<\tau^-_0\}}$ is $\cup_{t\geq 0}\cF_t$-measurable, the above function is $\cB(\bR)$-measurable. 
The proof is complete. 
\end{proof}

The proofs of Theorems \ref{Thm401} and \ref{Thm402} are based on the arguments used in the proofs of \cite[Theorem 3.4, Lemma 3.4, and Theorem 3.6]{Nob2020a}. 
\begin{proof}[Proof of Theorem \ref{Thm401}]
By the strong Markov property of $\cE^a$, \eqref{additive_property} and since $X_{\tau^+_x}=x$ on $\{\tau^+_x<\infty\}$ under $\cE^a$ when $a\leq x$, we have, for $q>0$, $a,b,x\in\bR$ with $a\leq x\leq b$ and $f\in C_0(E)$,
\begin{align}
\cE^0_y\sbra{e^{-q\tau^+_{b-a}}f(Y_{\tau^+_{b-a}}); \tau^+_{b-a}<\infty}
&=\cE^a_y\sbra{e^{-q\tau^+_b}f(Y_{\tau^+_b}); \tau^+_b<\infty}\\
&=\cE^a_y\sbra{e^{-q\tau^+_x}\bE_{(x, Y_{\tau^+_x})}\sbra{f(Y_{\tau^+_b});\tau^+_b<\tau^-_a};\tau^+_x<\infty}\\
&=\cE^0_y\sbra{e^{-q\tau^+_{x-a}}\bE_{(x, Y_{\tau^+_{x-a}})}\sbra{f(Y_{\tau^+_b});\tau^+_b<\tau^-_a};\tau^+_{x-a}<\infty}
\end{align}
and hence 
we obtain $\ttE^{(q)}_{b-a}=\ttE^{(q)}_{x-a}\bfQ^{(q),[a,b]}_x$. 
Applying $\bfW^{(q)}_{x-a}$ to both sides of this identity and 
\eqref{scaleinverse}, the proof is complete. 
\end{proof}
\begin{proof}[Proof of Theorem \ref{Thm402}]
We fix $q>0$, $a\leq 0\leq b$ and $f\in C_0(E)$. 
By the strong Markov property at the jumptimes of $L^0$, 
the same computation as \eqref{split_formula} and Theorem \ref{Thm401}, we have  
\begin{align}
\bE_{(0,y)}&\sbra{\int_{[0, \tau^-_a\land \tau^+_b]}e^{-qs} f(Y_s)\diff L^0_s}\\
&=\bE_{(0,y)}\sbra{\int_{[0, \tau^-_a\land \tau^+_b]}e^{-qs} \cE^0_{Y_s}\sbra{e^{-q\tau^+_b} \bfW^{(q)}_bf(Y_{\tau^+_b});\tau^+_b<\infty}\diff L^0_s}\\
&=\bE_{(0,y)}\sbra{e^{-q\tau^+_b} \bfW^{(q)}_bf(Y_{\tau^+_x});\tau^+_b<\tau^-_a}\\
&= \bfW^{(q)}_{-a}\bfW^{(q),-1}_{b-a} \bfW^{(q)}_bf(y). \label{Keypotential}
\end{align}
For $x\in(0,b]$, we have 
\begin{align}
\bfL^{(q),[a,b]}_{\{x\}}f(y)&=\bE_{(0,y)}\sbra{e^{-q\tau^+_x} \bE_{(X_{\tau^+_x}, Y_{\tau^+_x})}\sbra{\int_{[0, \tau^-_a\land \tau^+_b]}e^{-qs} f(Y_s)\diff L^x_s};\tau^+_x<\tau^-_a}\\
&=\bE_{(0,y)}\sbra{e^{-q\tau^+_x} \bE_{(0, Y_{\tau^+_x})}\sbra{\int_{[0, \tau^-_{a-x}\land \tau^+_{b-x}]}e^{-qs} f(Y_s)\diff L^0_s};\tau^+_x<\tau^-_a}\\
&=\bfW^{(q)}_{-a}\bfW^{(q),-1}_{x-a}\bfW^{(q)}_{x-a}\bfW^{(q),-1}_{b-a} \bfW^{(q)}_{b-x}f(y)\\
&=\bfW^{(q)}_{-a}\bfW^{(q),-1}_{b-a} \bfW^{(q)}_{b-x}f(y),
\end{align}
where in the first equality we used the strong Markov property, in the second equality we used the fact that $X_{\tau^+_x}=x$ on $\{\tau^+_x<\infty\}$ and \eqref{additive_property} and in the third equality we used Theorem \ref{Thm402} and \eqref{Keypotential}. 
We define $T_x=\inf\{t>0: X_t=x\}$ for $x\in\bR$. 
By the strong Markov property and \eqref{additive_property}, we have, for $x\in[a, 0]$,  
\begin{align}
\bfL^{(q),[a,b]}_{\{x\}}f(y)&=\bE_{(0,y)}\sbra{e^{-qT_x} \bE_{(x, Y_{T_x})}\sbra{\int_{[0, \tau^-_a\land \tau^+_b]}e^{-qs} f(Y_s)\diff L^x_s};T_x<\tau^-_a\land \tau^+_b}\\
&=\bE_{(0,y)}\sbra{e^{-qT_x} \bE_{(0, Y_{T_x})}\sbra{\int_{[0, \tau^-_{a-x}\land \tau^+_{b-x}]}e^{-qs} f(Y_s)\diff L^x_s};T_x<\tau^-_a\land \tau^+_b}
. \label{potentialexplicit1}
\end{align}
On the other hand, by the strong Markov property, we have, for $x\in[a, 0]$ and $f\in C_0(E)$, 
\begin{align}
\bfQ^{(q),[a,b]}_0f(y)-\bfQ^{(q),[x,b]}_0f(y)
&=\bE_{(0, y)}\sbra{e^{-q\tau^+_b}f(Y_{\tau^+_b});T_x<\tau^+_b<\tau^-_a}\\
&=\bE_{(0,y)}\sbra{e^{-qT_x} \bfQ^{(q),[a,b]}_x f(Y_{T_x});T_x<\tau^-_a\land \tau^+_b}. 
\label{potentialexplicit2}
\end{align}
By \eqref{potentialexplicit1} and \eqref{potentialexplicit2}, we have, for $x\in[a, 0]$,  
\begin{align}
\bfL^{(q),[a,b]}_{\{x\}}f(y)=\bfQ^{(q),[a,b]}_0\bfQ^{(q),[a,b],-1}_xh_f(y)-\bfQ^{(q),[x,b]}_0\bfQ^{(q),[a,b],-1}_xh_f(y),
\end{align}
where 
\begin{align}
h_f(y)=\bE_{(0,y)}\sbra{\int_{[0, \tau^-_{a-x}\land \tau^+_{b-x}]}e^{-qs} f(Y_s)\diff L^0_s},\qquad y\in E. 
\end{align}
Combining this with Theorem \ref{Thm402} and \eqref{Keypotential}, we have
\begin{align}
\bfL^{(q),[a,b]}_{\{x\}}f(y)=&\bfW^{(q)}_{-a}\bfW^{(q),-1}_{b-a}\bfW^{(q)}_{b-a}\bfW^{(q),-1}_{x-a} 
\bfW^{(q)}_{x-a}\bfW^{(q),-1}_{b-a} \bfW^{(q)}_{b-x}f(y)\\
&-\bfW^{(q)}_{-x}\bfW^{(q),-1}_{b-x}\bfW^{(q)}_{b-a}\bfW^{(q),-1}_{x-a} 
\bfW^{(q)}_{x-a}\bfW^{(q),-1}_{b-a} \bfW^{(q)}_{b-x}f(y)\\
=&\bfW^{(q)}_{-a}\bfW^{(q),-1}_{b-a} \bfW^{(q)}_{b-x}f(y)
-\bfW^{(q)}_{-x}f(y).
\end{align}
The proof is complete
\end{proof}

\subsection{The proof of Theorem \ref{Thm300} (iii)}
In this subsection, we show that the scale operators, defined in Subsection \ref{subsec402}, satisfies the conditions stated in Theorem \ref{Thm300} (iii). 
In the case of a spectrally negative L\'evy processes, we can naturally characterize the Laplace transform of the scale function by taking the Laplace transform of \cite[(12)]{BifKyp2010} that comes out of the two-sided exit problem. 
We will do the proof with the same idea, but since we do not go through the Meyer--It\^o formula, the calculations get complicated. 
\par
For $\beta\geq 0$, $f\in C_0(E)$ and $y\in E$, we define 
\begin{align}
\bff^{(\beta)}_1(y)&:=\beta \ttd(y)+\int_{(-\infty , 0)} \rbra{e^{\beta x}-1 } \bfn (y, \diff x), 
\\
\bfF^{(\beta)}_2f(y)&:=\int_{(-\infty, 0)}\rbra{F_{y,y^\prime}(\beta)f(y^\prime)-f(y)}\nu(y, \diff y^\prime).
\end{align}
Note that $\beta\geq 0$ and $f\in D(\cA_Y)$, we have 
\begin{align}
\bfF^{(\beta)} f(y)=\bff^{(\beta)}_1(y) f(y)+\bfF^{(\beta)}_2f(y) +\cA_{Y^\prime} f(y),\qquad y\in E. \label{Dbetadecom}
\end{align}

\begin{Lem}\label{Lem406}
For $\beta\geq 0$ and $f\in C_0(E)$, we have 
\begin{align}
\lim_{t\downarrow0}\sup_{y\in E}
\absol{\frac{\bE_{(0,y)} \sbra{e^{\beta X_t}f( Y_t) }-f(y)}{t}-\frac{\bE_{(0,y)}\sbra{f(Y_t)-f(y); t<T^Y_1}}{t}-\bff^{(\beta)}_1(y)f(y)-\bfF^{(\beta)}_2f(y)}=0 . 
\end{align}
\end{Lem}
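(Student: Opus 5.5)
We decompose the expectation according to the number and type of jumps on $[0,t]$, using the pathwise construction of Remark \ref{BassSection6}. On $\{t<T^Y_1\}$ the modulator follows the deterministic flow, $Y_s=\phi_s(y)$ for $s\le t$. On $\{T^Y_1\le t\}$ it has made at least one $\nu$-jump. Independently of $Y$, the $\ttn$-type jumps of $X$ arrive at rate $\ttn(Y_s,(-\infty,0))\le M_\ttn$. Since all jump rates are bounded by $M_\nu+M_\ttn$, the event of two or more jumps (of either type) in $[0,t]$ has probability at most $C t^2$, uniformly in $y$. Because $X_t\le \bar d t$ and $|f|\le\norm{f}_\infty$, the integrand $e^{\beta X_t}f(Y_t)$ is bounded by $e^{\beta\bar d t}\norm{f}_\infty$. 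Hence this event contributes $o(t)$ uniformly in $y$, and we only need to treat three events: no jump, exactly one $\ttn$-jump of $X$ with $Y$ jump-free, and exactly one $\nu$-jump of $Y$ with no $\ttn$-jump.

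\emph{Step 1 (no jump).} On this event $X_t=D_t=\int_0^t\ttd(\phi_s(y))\diff s$. We write
\begin{align}
e^{\beta X_t}f(Y_t)=f(Y_t)+(e^{\beta X_t}-1)f(Y_t).
\end{align}
The first term, together with the one-$\ttn$-jump event on which $Y$ is still unjumped, reconstructs $\bE_{(0,y)}\sbra{f(Y_t);t<T^Y_1}$, up to $o(t)$ coming from $e^{\beta X_t}-1=O(t)$ times an event of probability $O(t)$. Subtracting $f(y)$ then yields exactly the second term in the statement.

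For the second term we use $(e^{\beta X_t}-1)/t\to\beta\ttd(y)$ together with $f(\phi_s(y))\to f(y)$. Uniformity in $y$ follows from uniform continuity of $\ttd$ along the flow, $|\ttd(\phi_s(y))-\ttd(y)|$. Since $\ttd$ is only continuous and bounded, I would instead use the bound
\begin{align}
\absol{\frac{1}{t}\int_0^t\ttd(\phi_s(y))f(\phi_s(y))\diff s-\ttd(y)f(y)}\leq \sup_{s\le t}\norm{\bfP^{Y'}_s(\ttd f)-\ttd f}_\infty .
\end{align}
This uses $\ttd f\in C_0(E)$ and the strong continuity of $\bfP^{Y'}$, and it gives $\beta\ttd(y)f(y)$ uniformly. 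The same bound handles the factor $\bP(t<T^Y_1)=1-O(t)$.

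\emph{Step 2 (one $\ttn$-jump).} The probability that this jump occurs in $\diff s$ is $\ttn(\phi_s(y),(-\infty,0))\diff s$ up to $1-O(t)$ factors. The jump size has law $\tilde\ttn$, so by the compensation formula the contribution of $(e^{\beta X_t}-1)f(Y_t)$ is
\begin{align}
\frac1t\int_0^t\int(e^{\beta x}-1)f(\phi_s(y))\,\ttn(\phi_s(y),\diff x)\diff s+o(1).
\end{align}
This converges uniformly to $\int(e^{\beta x}-1)\ttn(y,\diff x)f(y)$ by the same semigroup-continuity argument, now applied to $g=\bfn\big((e^{\beta\cdot}-1)\big)f$. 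This function is continuous by \eqref{finiteness2}, vanishes at infinity, and lies in $C_0(E)$.

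\emph{Step 3 (one $\nu$-jump).} By Lemma \ref{LemB01} and Remark \ref{BassSection6}, the contribution is
\begin{align}
\frac1t\int_0^t\int_E\int e^{\beta x}f(\phi_{t-s}(y'))\,\rho_{\phi_s(y),y'}(\diff x)\,\nu(\phi_s(y),\diff y')\diff s ,
\end{align}
up to $o(1)$ errors. Replacing $\phi_{t-s}(y')$ by $y'$ costs at most $M_\nu\sup_{u\le t}\norm{\bfP^{Y'}_uf-f}_\infty$. What remains is $\frac1t\int_0^t h(\phi_s(y))\diff s$ with $h(z)=\int_E F_{z,y'}(\beta)f(y')\nu(z,\diff y')$, which lies in $C_0(E)$ by \eqref{finiteness1} and Assumption \ref{Ass211}(i). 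The term $-f(y)\nu(y,E)$ in $\bfF^{(\beta)}_2 f$ is accounted for by the probability of $Y$ jumping, since $\bE[f(Y_t);t<T^Y_1]$ was split off, and $\nu(\cdot,E)f\in C_0(E)$ as well.

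The main obstacle is obtaining uniformity in $y$ over a non-compact $E$ while the integrands are evaluated along the flow $\phi_s(y)$. I plan to handle every such term through the single device
\begin{align}
\sup_y\absol{\frac1t\int_0^t g(\phi_s(y))\diff s-g(y)}\le\sup_{s\le t}\norm{\bfP^{Y'}_sg-g}_\infty ,
\end{align}
applied to auxiliary functions $g\in C_0(E)$. The delicate step is checking that each auxiliary function, in particular $h$ and the products involving $\ttd$ and $\bfn$, genuinely lies in $C_0(E)$. This is where \eqref{finiteness1}, \eqref{finiteness2} and Assumption \ref{Ass211}(i), (iii) are needed.
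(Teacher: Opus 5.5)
Your proof is correct, but it decomposes the computation differently from the paper. The paper never counts the $\ttn$-jumps of $X$. It conditions on the path of $Y$ and uses the quenched transform obtained from \eqref{L--K} at $\lambda=-i\beta$ (Remark \ref{Rem205}), namely $\bE^\omega_0[e^{\beta X_t}]=J^Y_t(\beta)\exp\{\int_0^t\bff^{(\beta)}_1(Y_s)\diff s\}$ with $J^Y_t(\beta)=\prod_{s\le t}F_{Y_{s-},Y_s}(\beta)$. It then writes the difference quotient as the sum of two pieces. The first is $\frac1t\bE_{(0,y)}[J^Y_t(\beta)(e^{\int_0^t\bff^{(\beta)}_1(Y_s)\diff s}-1)f(Y_t)]$, which tends to $\bff^{(\beta)}_1f$ after restricting to $\{T^Y_1>t\}$; there $Y$ follows the flow and the exponential is differentiated directly. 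The second is $\frac1t(\bE_{(0,y)}[J^Y_t(\beta)f(Y_t)]-f(y))$, which is split only over $\{t<T^Y_1\}$, $\{T^Y_1\le t<T^Y_2\}$ and $\{T^Y_2\le t\}$.

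You instead work with the annealed pathwise construction of Remark \ref{BassSection6} and condition on the first jump of either type. As a result, the coefficient $\bff^{(\beta)}_1=\beta\ttd+\bfn(e^{\beta\cdot}-1)$ arises as the sum of your Step 1 and Step 2 contributions rather than in one piece.

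The paper's route is shorter, because the exponential formula integrates out any number of $\ttn$-jumps at once. However, it relies on the conditional additive structure and on the real-exponent extension of \eqref{L--K}. Yours needs only bounded jump rates and first-jump densities. It also makes explicit which auxiliary functions must lie in $C_0(E)$ for uniformity in $y$: $\ttd f$, $\bfn(e^{\beta\cdot}-1)f$, $h$, and $\nu(\cdot,E)f$, the last coming from Assumption \ref{Ass211} (i) and (iii). The paper uses these memberships only implicitly when it invokes \eqref{supupper} for $Y^\prime$.

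Some minor bookkeeping is imprecise as written:
\begin{itemize}
\item In Step 1, the discrepancy in reconstructing $\bE_{(0,y)}[f(Y_t);t<T^Y_1]$ comes from the event of two or more $\ttn$-jumps and is $O(t^2)$, not from $e^{\beta X_t}-1=O(t)$.
\item Subtracting $f(y)$ leaves $-f(y)\bP_{(0,y)}(T^Y_1\le t)$ in addition to the second term of the statement; you do recover this in Step 3.
\item The $\ttn$-jumps form a Poisson process conditionally on $Y$; they are not independent of $Y$.
\end{itemize}
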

\begin{proof}
We decompose as 
\begin{align}
&\frac{\bE_{(0,y)} \sbra{e^{\beta X_t}f( Y_t) }-f(y)}{t}=
\frac{\bE_{(0,y)} \sbra{\bE_0^{\omega}[e^{\beta X_t}]f( Y_t) }-f(y)}{t}
\\
&=\frac{1}{t}\bE_{(0,y)}\Bigg{[}
J^Y_t(\beta)
\rbra{\exp\cbra{ \int_0^t \bff^{(\beta)}_1(Y_s) \diff s}
-1}f(Y_t)\Bigg{]}
\\
&\qquad+\frac{\bE_{(0,y)}\sbra{J^Y_t(\beta)f(Y_t)}-f(y)}{t},
\label{40602}
\end{align}
where $J^Y_t(\beta):=\prod_{s\in[0,t]}F_{Y_{s-},Y_s}(\beta)$ for $t\geq 0$ and $\beta\geq0$. 
For $f\in C_0(E)$ and $\beta \geq 0$, we have 
\begin{align}
&\lim_{t\downarrow0}\sup_{y\in E}\absol{\frac{1}{t}\bE_{(0,y)}
\sbra{J^Y_t(\beta)
\rbra{\exp\cbra{ \int_0^t\bff^{(\beta)}_1(Y_s)\diff s}
-1}f(Y_t)}-\bff^{(\beta)}_1(y)f (y)}\\
&\leq\lim_{t\downarrow0} \sup_{y\in E}\absol{\bE_{(0,y)}
\sbra{\frac{1}{t}J^Y_t(\beta)
\rbra{\exp\cbra{ \int_0^t\bff^{(\beta)}_1(Y_s)\diff s}
-1}f(Y_t)-\bff^{(\beta)}_1(y)f (y); T^Y_1>t}}\\
&+\lim_{t\downarrow0} \sup_{y\in E}\absol{\bE_{(0,y)}
\sbra{\frac{1}{t}J^Y_t(\beta)
\rbra{\exp\cbra{ \int_0^t\bff^{(\beta)}_1(Y_s)\diff s}
-1}f(Y_t)-\bff^{(\beta)}_1(y)f (y); T^Y_1\leq t}}\\
&\leq\lim_{t\downarrow0} \sup_{y\in E}  
\absol{\bE_{(0,y)}\sbra{\frac{1}{t}\rbra{\int_0^t\bff^{(\beta)}_1(Y^\prime_u)\exp\cbra{ \int_0^u\bff^{(\beta)}_1(Y^\prime_s)\diff s}\diff u}f(Y^\prime_t)-\bff^{(\beta)}_1(y)f (y)}}\\
&+\lim_{t\downarrow0} (1-e^{-M_\nu t})\cdot 2(\beta\bar{d}+M_\nu)e^{t(\beta+M_\nu)} \norm{f}_\infty \\
&
\leq\lim_{t\downarrow0} \sup_{y\in E}  
\bE_{(0,y)}\sbra{\sup_{u\in[0, t]}\absol{\bff^{(\beta)}_1(Y^\prime_u)\exp\cbra{ \int_0^u\bff^{(\beta)}_1(Y^\prime_s)\diff s}f(Y^\prime_t)-\bff^{(\beta)}_1(y)f (y)}}\\
&\leq\lim_{t\downarrow0} \sup_{y\in E}  
\bE_{(0,y)}\sbra{\sup_{u\in[0, t]}\absol{\bff^{(\beta)}_1(Y^\prime_u)\exp\cbra{ \int_0^u\bff^{(\beta)}_1(Y^\prime_s)\diff s}f(Y^\prime_u)-\bff^{(\beta)}_1(y)f (y)}}\\
&+\lim_{t\downarrow0} \sup_{y\in E}  
\bE_{(0,y)}\sbra{\sup_{u\in[0, t]}\absol{\bff^{(\beta)}_1(Y^\prime_u)\exp\cbra{ \int_0^u\bff^{(\beta)}_1(Y^\prime_s)\diff s}(f(Y^\prime_t)-f(Y^\prime_u))}}
=0, 
\label{40603}
\end{align}
where in the second inequality we used Remark \ref{BassSection6} and Assumption \ref{Ass203} (ii) and (iii) 
and in the last equality we used Assumption \ref{Ass203} (ii) and (iii) and \eqref{supupper} for $Y^\prime$. 
We can sprit
\begin{align}
&\frac{\bE_{(0,y)}\sbra{J^Y_t(\beta)f(Y_t)}-f(y)}{t}\\
&=\frac{\bE_{(0,y)}\sbra{f(Y_t)-f(y); t<T^Y_1}}{t}+
\frac{\bE_{(0,y)}\sbra{J^Y_t(\beta)f(Y_t)-f(y);T^Y_1\leq t<T^Y_2}}{t}\\
&\quad +\frac{\bE_{(0,y)}\sbra{J^Y_t(\beta)f(Y_t)-f(y);T^Y_2\geq t}}{t}. 
\label{40601new}
\end{align}
By Remark \ref{BassSection6}, we have, for $f\in C_0(E)$ and $\beta \geq 0$, 
\begin{align}
\lim_{t\downarrow0}\sup_{y\in E}&\absol{\frac{\bE_{(0,y)}\sbra{J^Y_t(\beta)f(Y_t)-f(y);T^Y_2\geq t}}{t}}\\
&\qquad\qquad=\lim_{t\downarrow0}2\norm{f}_\infty\absol{\frac{1-(1+M_\nu t)e^{-M_\nu t}}{t}}=0. \label{40604}
\end{align} 
We have, for $f\in C_0(E)$ and $\beta \geq 0$
\begin{align}
&\lim_{t\downarrow0}\sup_{y\in E} \absol{\frac{\bE_{(0,y)}\sbra{J^Y_t(\beta)f(Y_t)-f(y);T^Y_1\leq t<T^Y_2}}{t}
-\bfF^{(\beta)}_2f(y)}\\
&=\lim_{t\downarrow0}\sup_{y\in E} \bigg{|}\frac{1}{t}
\int_{(0, t)}\bP_{(0,y)}\rbra{T^Y_1 \in \diff s}\\
&\qquad\int_{(-\infty, 0)}\rbra{F_{\phi_s(y),y^\prime}(\beta)f(\phi_{y^\prime}(t-s))-f(\phi_s(y))}
\tilde{\nu}(\phi_s(y), \diff y^\prime)
-\bfF^{(\beta)}_2f(y)\bigg{|}\\
&=\lim_{t\downarrow0}\sup_{y\in E} \bigg{|}\frac{1}{t}
\int_{(0, t)}\nu(\phi_s(y), (-\infty,0)) \exp\cbra{\int_0^s\nu(\phi_u(y), (-\infty,0))\diff u}\diff s\\
&\qquad\int_{(-\infty, 0)}\rbra{F_{\phi_s(y),y^\prime}(\beta)f(\phi_{y^\prime}(t-s))-f(\phi_s(y))}
\tilde{\nu}(\phi_s(y), \diff y^\prime)
-\bfF^{(\beta)}_2f(y)\bigg{|}\\
&\leq\lim_{t\downarrow0}\sup_{y\in E} \absol{\frac{1}{t}
\int_{(0, t)} \exp\cbra{\int_0^s\nu(\phi_u(y), (-\infty,0))\diff u}\bfF^{(\beta)}_2f(\phi_s(y))\diff s
-\bfF^{(\beta)}_2f(y)}\\
&\quad+\lim_{t\downarrow0}\sup_{y\in E} \bigg{|}\frac{1}{t}
\int_{(0, t)} \exp\cbra{\int_0^s\nu(\phi_u(y), (-\infty,0))\diff u}\diff s\\
&\quad\qquad\int_{(-\infty, 0)}F_{\phi_s(y),y^\prime}(\beta)\rbra{f(\phi_{y^\prime}(t-s))-f(y^\prime)}
\nu(\phi_s(y), \diff y^\prime)\bigg{|}\\
&\leq\lim_{t\downarrow0}\sup_{y\in E} \absol{\frac{1}{t}
\int_{(0, t)} \bfF^{(\beta)}_2f(\phi_s(y))\diff s
-\bfF^{(\beta)}_2f(y)}\\
&\quad+\lim_{t\downarrow0}\sup_{y\in E} \absol{\frac{1}{t}
\int_{(0, t)} \rbra{\exp\cbra{\int_0^s\nu(\phi_u(y), (-\infty,0))\diff u}-1}\bfF^{(\beta)}_2f(\phi_s(y))\diff s
}\\
&\quad+\lim_{t\downarrow0} e^{tM_\nu}M_\nu\sup_{y\in E}\bE_{(0 ,y)}\sbra{\sup_{s\in [0, t]}|f(\phi_s(y))-f(y)|} \\
&\leq\lim_{t\downarrow0}\sup_{y\in E} \absol{\sup_{s\in[0, t]}\absol{\bfF^{(\beta)}_2f(\phi_s(y))-
\bfF^{(\beta)}_2f(y)}}
+\lim_{t\downarrow0}\frac{e^{t M_\nu}}{t}\cdot2\norm{f}_\infty M_\nu=0,
\label{40605}
\end{align}
where in the first equality we used Remark \ref{BassSection6} and \eqref{40604}, in the second equality we used Remark \ref{BassSection6}, in the second inequality we used Remark \ref{BassSection6}
and in the last inequality and equality we used \eqref{supupper} for $Y^\prime$. 
By 
\eqref{40602}, \eqref{40603}, \eqref{40601new}, \eqref{40604} and \eqref{40605}, the proof is complete. 
\end{proof}
For every $q>0$ and $x\in[0, \infty)$, it follows from the form of $\bfL^{(q)}_x$ that it is a bounded linear operator with norm at most $\frac{1}{\un{d}}$. Moreover, since $\{\bfH^{(q)}_a:a\geq 0\}$ is extended to be a $C_0$-group, there exist constant $K_{\bfH^{(q)}}$ and $M_{\bfH^{(q)}}$ such that the norm of $\bfH^{(q),-1}_x$ 
is bounded by $ K_{\bfH^{(q)}}e^{M_{\bfH^{(q)}}x}$ for every $x\in[0,\infty)$. 
Thus, 
\begin{align}
\text{the norm of }\bfW^{(q)}_x\text{ is bounded by }\frac{K_{\bfH^{(q)}}}{\un{d}}e^{M_{\bfH^{(q)}}x}\text{ for }q> 0
\text{ and }x\geq 0. 
\label{Wnormbound}
\end{align}
So, we can define, for $q> 0$ and $\beta>M_{\bfH^{(q)}}$, 
\begin{align}
\bfV^{(q)}_\beta f( y)&:=\int_\bR e^{-\beta x}\bfW^{(q)}_{x} f(y) \diff x
\qquad
f\in C_0(E), ~
y\in E.
\end{align}
Note that ${\bfV}^{(q)}_\beta f \in C_0 (E)$ by the dominated convergence theorem. 
\begin{Lem}\label{Lem408}
For $q>0$, $\beta>M_{\bfH^{(q)}}$, and $f\in C_0(E)$, we have 
\begin{align}
\lim_{t\downarrow 0}\sup_{y\in E}
\absol{
\int_0^\infty e^{-\beta x}\frac{\bE_{(x,y)} \sbra{\bfW^{(q)}_{X_t}f(Y_t)}-\bfW^{(q)}_xf(y)}{t}\diff x-
q {V}^{(q)}_\beta f(y)}=0 . \label{40702}
\end{align}
\end{Lem}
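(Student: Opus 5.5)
The idea is that $x\mapsto \bfW^{(q)}_x f$ behaves like a $q$-harmonic function for the MAP killed on entering $(-\infty,0)$. Concretely, the process $t\mapsto e^{-qt}\bfW^{(q)}_{X_t}f(Y_t)$ stopped at $\tau^-_0$ should be a martingale. This is exactly what the representation used in the proof of Lemma \ref{Lem504} gives: for $0\le x\le b$,
\begin{align}
\bfW^{(q)}_xf(y)=\bE_{(x,y)}\sbra{e^{-q\tau^+_b}\bfW^{(q),-1}_bf(Y_{\tau^+_b});\tau^+_b<\tau^-_0},
\end{align}
which follows from Theorem \ref{Thm401}. For $x\ge0$ and small $t$, we condition on $\cF_t$ and apply the strong Markov property at $t$ on the event $\{t<\tau^+_b\wedge\tau^-_0\}$. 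This gives
\begin{align}
\bfW^{(q)}_xf(y)=\bE_{(x,y)}\sbra{e^{-qt}\bfW^{(q)}_{X_t}f(Y_t); t<\tau^-_0\wedge\tau^+_b}+R_t(x,y),
\end{align}
where $R_t$ collects the contribution of paths that reach level $b$ before time $t$. Since $\bfW^{(q)}_z f\equiv0$ for $z<0$, the indicator $t<\tau^-_0$ can be dropped, at the cost of an error coming from paths that go below $0$ before $t$ and come back by time $t$.

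Next we fix $b$ and choose $b$ large enough. I would actually take $b\to\infty$ jointly with the integral in $x$. For $x\le b-\bar d t$, the event $\tau^+_b\le t$ is empty because the drift is at most $\bar d$ and there are no upward jumps. By \eqref{Wnormbound} and $\beta>M_{\bfH^{(q)}}$, the region $x>b-\bar dt$ contributes at most $Ce^{-(\beta-M_{\bfH^{(q)}})b}$ after weighting by $e^{-\beta x}$, uniformly in $t\le1$. Letting $b\to\infty$ therefore removes $R_t$. Rearranging, the integrand equals
\begin{align}
\frac{\bE_{(x,y)}[\bfW^{(q)}_{X_t}f(Y_t)]-\bfW^{(q)}_xf(y)}{t}=\frac{1-e^{-qt}}{t}\,\bE_{(x,y)}[\bfW^{(q)}_{X_t}f(Y_t)]+\frac{\text{error}_t(x,y)}{t},
\end{align}
where $\text{error}_t$ comes from paths that cross below $0$ before $t$ and then return to $[0,\infty)$ before $t$.

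Crossing below $0$ and returning within time $t$ requires a jump followed by drift. Such paths start from $x\in[0,\bar d t]$, or more precisely the jump must land in $[-\bar dt,0)$. Using Remark \ref{BassSection6}, the probability of at least one jump by time $t$ is at most $(M_\nu+M_\ttn)t$. The set of relevant $x$ has Lebesgue measure $O(t)$ plus the jump-size term, and $\bfW$ is bounded. So $\int e^{-\beta x}|\text{error}_t|\,\diff x/t=O(t)$ uniformly in $y$. We also need $x<0$ with $X_t\ge0$. The integrand there is $\bE_{(x,y)}[\bfW_{X_t}f(Y_t)]/t$ supported on $x\in[-\bar dt,0)$, with sup-norm bounded. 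This gives another $O(t)$.

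Finally, $\frac{1-e^{-qt}}{t}\to q$. For the main term, we need $\int_0^\infty e^{-\beta x}\bE_{(x,y)}[\bfW^{(q)}_{X_t}f(Y_t)]\,\diff x\to \bfV^{(q)}_\beta f(y)$ uniformly in $y$. Writing $X_t=x+(X_t-X_0)$ and substituting $z=x+X_t$, this equals $\bE_{(0,y)}[e^{\beta X_t}\int e^{-\beta z}\bfW_z f(Y_t)\,\diff z]=\bE_{(0,y)}[e^{\beta X_t}\bfV^{(q)}_\beta f(Y_t)]$ up to $O(t)$ boundary terms. This converges uniformly to $\bfV^{(q)}_\beta f(y)$ by Lemma \ref{Lem406} (the difference is $O(t)$) and the Feller property of $Y$, since $\bfV^{(q)}_\beta f\in C_0(E)$.

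The main obstacle is making the martingale step rigorous uniformly in $y$ while controlling the error terms near $x=0$ and near $b$ with only the exponential bound \eqref{Wnormbound}. I would first try to handle the $b$-truncation by choosing $b=b(t)\to\infty$ slowly.
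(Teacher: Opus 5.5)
Your core argument matches the paper's. You represent $\bfW^{(q)}_x f=\bfQ^{(q),[0,b]}_x\bfW^{(q)}_b f$ via Theorem \ref{Thm401} and use the Markov property at $t$. This shows that $e^{-qt}\bE_{(x,y)}[\bfW^{(q)}_{X_t}f(Y_t)]-\bfW^{(q)}_xf(y)$ only involves paths that jump below $0$ and drift back to $[0,\infty)$ by time $t$. You then show this is $O(t)$ after dividing by $t$ and integrating against $e^{-\beta x}$, and let $(1-e^{-qt})/t\to q$.

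The paper is simpler in two places.

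The $b$-truncation you call the main obstacle is not one. The identity only needs $b\geq x+\bar{d}t$, so the paper takes $b=x+\bar{d}$ separately for each $x$, which is valid for all $t<1$. This gives $R_t\equiv0$ with no global $b$ and no $b(t)$. As written, your uniform bound $Ce^{-(\beta-M_{\bfH^{(q)}})b}$ is only justified on $(b-\bar{d}t,b]$. For $x>b$ the representation with that $b$ is unavailable, and the crude bound there is $O(e^{-(\beta-M_{\bfH^{(q)}})b}/t)$. So a $t$-dependent choice would need $e^{-(\beta-M_{\bfH^{(q)}})b(t)}=o(t)$, not ``slow'' growth.

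The main term needs neither the change of variables nor Lemma \ref{Lem406}. Multiplying your error bound by $t$ already gives $\sup_{y}\absol{\int_0^\infty e^{-\beta x}(e^{-qt}\bE_{(x,y)}[\bfW^{(q)}_{X_t}f(Y_t)]-\bfW^{(q)}_xf(y))\diff x}\to0$. That is uniform convergence to $\bfV^{(q)}_\beta f$, which is how the paper concludes. Your route also works (it is the computation of Lemma \ref{Lem410} run backwards), but it is a detour.

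Some slips to fix:

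\begin{enumerate}
\item The representation must contain $\bfW^{(q)}_b f$, not $\bfW^{(q),-1}_b f$, since Theorem \ref{Thm401} gives $\bfQ^{(q),[0,b]}_x\bfW^{(q)}_b=\bfW^{(q)}_x$. The display in the proof of Lemma \ref{Lem504} has the same misprint. The Markov step is unaffected.

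\item In the dip-and-return estimate, $x$ is not confined to $[0,\bar{d}t]$, and $\bfW^{(q)}_x$ is not uniformly bounded. The rigorous version is the paper's Fubini argument:
\begin{itemize}
\item under $\bP_{(x,y)}$ the first jump time and jump size have laws independent of $x$;
\item for each jump size, the set of $x\geq0$ whose post-jump position lies in $[-\bar{d}t,0)$ has length at most $\bar{d}t$;
\item $\bP_{(x,y)}(T^X_1\leq t)=O(t)$ uniformly, and two or more jumps cost $O(t^2)$;
\item \eqref{Wnormbound} contributes a factor $e^{M_{\bfH^{(q)}}x}$, which is integrable against $e^{-\beta x}$ precisely because $\beta>M_{\bfH^{(q)}}$.
\end{itemize}

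\item The paragraph on $x<0$ does not belong here, since the lemma integrates over $[0,\infty)$ only. Its conclusion is also wrong. That integrand is of order $1/t$ on an interval of length $O(t)$, so the term is $O(1)$, not $O(t)$. In fact it converges to $f(y)$ by Lemma \ref{Lem409}, which is exactly the $+f$ in Lemma \ref{Lem410}.
\end{enumerate}
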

\begin{proof}
We fix $(x, y)\in [0, \infty)\times E$ and $t\in (0,1)$. 
We have, 
\begin{align}
&\frac{\bE_{(x,y)} \sbra{\bfW^{(q)}_{X_t}f(Y_t)}-\bfW^{(q)}_xf(y)}{t}\\
&=
\frac{e^{-qt}\bE_{(x,y)} \sbra{\bfW^{(q)}_{X_t}f(Y_t)}-\bfW^{(q)}_xf(y)}{t}+\frac{1-e^{-qt}}{t}\bE_{(x,y)} \sbra{\bfW^{(q)}_{X_t}f(Y_t)}. 
\label{407001}
\end{align}
Since the drift of $X$ is less than $\bar{d}$, the process $X$, started at $x$, cannot hit $(x+t\bar{d}, \infty )$ by time $t$.
So, by Markov property at $t$ and Theorem \ref{Thm401}, we have, for $t\in(0, 1)$,
\begin{align}
&\frac{e^{-qt}\bE_{(x,y)} \sbra{\bfW^{(q)}_{X_t}f(Y_t)}-\bfW^{(q)}_xf(y)}{t}\\
&=\frac{e^{-qt}\bE_{(x,y)} \sbra{\bfQ^{(q),[0,{x+\bar{d}}]}_{X_t}\bfW^{(q)}_{x+\bar{d}}f(Y_t)}-\bfQ^{(q),[0,{x+\bar{d}}]}_x\bfW^{(q)}_{x+\bar{d}}f(y)}{t}
\\
&=\frac{\bE_{(x, y)}\sbra{e^{-q\tau^+_{x+\bar{d}}}\bfW^{(q)}_{x+\bar{d}}f(Y_{\tau^+_{x+\bar{d}}});t<\tau^+_{x+\bar{d}}<\tau^-_0}-\bfQ^{(q),[0,{x+\bar{d}}]}_x\bfW^{(q)}_{x+\bar{d}}f(y)}{t}. \label{407002a}
\end{align}
In addition, $\bE_{(x, y)}\sbra{e^{-q\tau^+_{x+\bar{d}}}\bfW^{(q)}_{x+\bar{d}}f(Y_{\tau^+_{x+\bar{d}}});t<\tau^+_{x+\bar{d}}<\tau^-_0}$ and $\bfQ^{(q),[0,{x+\bar{d}}]}_x\bfW^{(q)}_{x+\bar{d}}f(y)$ differ only on the event that $X$ enters $(-\infty,0)$ and subsequently returns to $[0,\infty)$ by time $t$.
So, by \eqref{Wnormbound}, we have 
\begin{align}
\int_0^\infty e^{-\beta x}&\absol{\frac{\bE_{(x, y)}\sbra{e^{-q\tau^+_{x+\bar{d}}}\bfW^{(q)}_{x+\bar{d}}f(Y_{\tau^+_{x+\bar{d}}});t<\tau^+_{x+\bar{d}}<\tau^-_0}-\bfQ^{(q),[0,{x+\bar{d}}]}_x\bfW^{(q)}_{x+\bar{d}}f(y)}{t}}\diff x\\
&  \leq
\int_0^\infty e^{-\beta x}\frac{\bP_{(x, y)}\sbra{T_0<t}}{t} \norm{\bfW^{(q)}_{x+\bar{d}}f}_\infty\diff x\\
&<\frac{K_{\bfH^{(q)}}e^{M_{\bfH^{(q)}}{\bar{d}}}}{\un{d}}
\norm{f}_\infty
\int_0^\infty e^{-(\beta-M_{\bfH^{(q)}}) x}\frac{\bP_{(x, y)}\sbra{T_0<t}}{t}  \diff x. \label{407002}
\end{align}
For $T_0\leq t$ to occur, $X$ must have at least one jump by time $t$. 
The probability of two or more jumps occurring in $(0,t)$ is at most $1-e^{-(M_\nu+M_\ttn)t}(1+(M_\nu+M_\ttn)t)$ by Remark \ref{BassSection6}, which tends to zero after division by $t$ as $t\downarrow 0$. 
Therefore, to analyze $\frac{\bP_{(x, y)}\sbra{T_0<t}}{t}$, it suffices to consider the first jump. 
Since $X$ starts from $x$ and its drift rate is less than $\bar{d}$, for $X$ to fall below $0$ in a single jump and subsequently return to $[0,\infty)$ by time $t$, the jump size must lie in $[-x-t\bar{d},-x)$. 
By the above discussion and since the first jump time and Remark \ref{BassSection6}, and the jump size depend only on $y$, we can use the Fubini's theorem and we have 
\begin{align}
&\int_0^\infty e^{-(\beta-M_{\bfH^{(q)}}) x}\frac{\bP_{(x, y)}\sbra{T_0<t}}{t}  \diff x\\
&\leq
\frac{1}{t}\int_0^\infty e^{-(\beta-M_{\bfH^{(q)}}) x}
\bigg{(}\int_{(0, t)}\bigg{(} 
{\tilde{\ttn}(\phi_s(y), [-x-t\bar{d},-x))}\\
&\qquad\qquad+
\tilde{\rho}(\phi_s(y), [-x-t\bar{d},-x))
\bigg{)} \bP_{(0, y)}\rbra{T^X_1\in \diff s}\bigg{)}
 \diff x\\
&\leq
\frac{1}{t(\beta-M_{\bfH^{(q)}})}
\bigg{(}\int_{(0, t)}\bigg{(} 
\int_{(-\infty, 0)}\rbra{e^{(\beta-M_{\bfH^{(q)}}) (x+\bar{d}t)}-e^{(\beta-M_{\bfH^{(q)}})x}}
{\tilde{\ttn}(\phi_s(y), \diff x)}
\\
&\qquad\qquad+
\int_{(-\infty, 0)}\rbra{e^{(\beta-M_{\bfH^{(q)}}) (x+\bar{d}t)}-e^{(\beta-M_{\bfH^{(q)}})x}} 
\tilde{\rho}(\phi_s(y), \diff x)
\bigg{)} \bP_{(x, y)}\rbra{T^X_1\in \diff s}\bigg{)}\\
&\leq \frac{1-e^{-(M_\nu+M_\ttn)t}}{t(\beta-M_{\bfH^{(q)}})} \cdot 2 \rbra{e^{(\beta-M_{\bfH^{(q)}}) \bar{d}t}-1}.
\label{407003a}
\end{align}
From \eqref{407002a}, \eqref{407002} and \eqref{407003a}, we have 
\begin{align}
\lim_{t\downarrow0}
\sup_{y\in E}\absol{\int_0^\infty e^{-\beta x}\frac{e^{-qt}\bE_{(x,y)} \sbra{\bfW^{(q)}_{X_t}f(Y_t)}-\bfW^{(q)}_xf(y)}{t}\diff x}=0. \label{407001b}
\end{align}
From 
\eqref{407001b}, we also have 
\begin{align}
\lim_{t\downarrow0}&\sup_{y\in E}\absol{\int_0^\infty e^{-\beta x}\frac{1-e^{-qt}}{t}\bE_{(x,y)} \sbra{\bfW^{(q)}_{X_t}f(Y_t)} \diff x 
-{\bfV}^{(q)}_\beta f(y)}\\
&\leq\lim_{t\downarrow0}\sup_{y\in E}
\absol{\int_0^\infty e^{-\beta x}e^{-qt}\bE_{(x,y)} \sbra{\bfW^{(q)}_{X_t}f(Y_t)} \diff x 
-{\bfV}^{(q)}_\beta f(y)}\\
&+\lim_{t\downarrow0}\absol{\frac{1-e^{-qt}}{t}-e^{-qt}}
\cdot\absol{\int_0^\infty e^{-\beta x}\bE_{(x,y)} \sbra{\bfW^{(q)}_{X_t}f(Y_t)} \diff x}=0. \label{407002b}
\end{align}
By \eqref{407001}, \eqref{407001b} and \eqref{407002b}, the proof is complete. 
\end{proof}
\begin{Lem}\label{Lem409}
For $q>0$, $\beta>M_{\bfH^{(q)}}$, and $f\in C_0(E)$, we have 
\begin{align}
\lim_{t\downarrow 0}\sup_{y\in E}
\absol{
\int_{-\infty}^0 e^{-\beta x}\frac{\bE_{(x,y)} \sbra{\bfW^{(q)}_{X_t}f(Y_t)}}{t}\diff x-
f(y)}=0 . \label{40702}
\end{align}

\end{Lem}
\begin{proof}
We can split as for $y\in E$, 
\begin{align}
\int_{-\infty}^0 e^{-\beta x}\frac{\bE_{(x,y)} \sbra{\bfW^{(q)}_{X_t}f(Y_t)}}{t}\diff x&
=\int_{-\infty}^0 e^{-\beta x}\frac{\bE_{(x,y)} \sbra{\bfW^{(q)}_{X_t}f(Y_t); T^X_1>t}}{t}\diff x\\
&~+\int_{-\infty}^0 e^{-\beta x}\frac{\bE_{(x,y)} \sbra{\bfW^{(q)}_{X_t}f(Y_t); T^X_1\leq t}}{t}\diff x.
\label{410last1}
\end{align}
Since the drift rate of $X$ is bounded above by $\bar{d}$, $X$ can move to the right by at most $\bar{d}t$ by time $t$. Hence, if $x<at$, then $\bE_{(x,\cdot)} \sbra{\bfW^{(q)}_{X_t}f(Y_t)}=0$. 
In addition, by Remark \ref{BassSection6}, $\bP_{(x, y)}(T^X_1\leq t) \leq 1-e^{-(M_\nu+M_\ttn)t}$, so combining with \eqref{Wnormbound}, we have, for $y\in E$ 
\begin{align}
\int_{-\infty}^0 e^{-\beta x}\frac{\bE_{(x,y)} \sbra{\bfW^{(q)}_{X_t}f(Y_t); T^X_1\leq t}}{t}\diff x
&=\int_{-\bar{d}t}^0 e^{-\beta x}\frac{\bE_{(x,y)} \sbra{\bfW^{(q)}_{X_t}f(Y_t); T^X_1\leq t}}{t}\diff x\\
&\leq \frac{\bar{d}t}{t}(1-e^{-(M_\nu+M_\ttn)t})\frac{K_{\bfH^{(q)}}}{\un{d}}e^{M_{\bfH^{(q)}}\bar{d}t}\norm{f}_\infty. 
\label{410last2}
\end{align}
If no jump occurs before time $t$, then by \eqref{L--K}, the increment of $X$ is given by $\psi_y(t)$ under $\bP_{(\cdot, y)}$, where 
\begin{align}
\psi_y(t):=\int_0^t \ttd(\phi_s(y))\,\diff s. 
\end{align}
By an argument similar to the one above, we have
\begin{align}
\int_{-\infty}^0 e^{-\beta x}&\frac{\bE_{(x,y)} \sbra{\bfW^{(q)}_{X_t}f(Y_t); T^X_1>t}}{t}\diff x\\
&=\int_{-\psi_y(t)}^0 e^{-\beta x}\frac{\bE_{(x,y)} \sbra{\bfW^{(q)}_{X_t}f(Y_t); T^X_1>t}}{t}\diff x. 
\label{410last3}
\end{align}
We define $\tau^t_x:=\inf\{s>t: X_t<x\}$ for $x\in \bR$. 
For $a>\bar{d}t$, we have 
\begin{align}
&\int_{-\psi_y(t)}^0 e^{-\beta x }\frac{\bE_{(x, y)}\sbra{\bfW^{(q)}_{X_t}f(Y_t) ; T^X_1>t}}{t}\diff x\\
&=\int_{-\psi_y(t)}^0 e^{-\beta x }\frac{\bE_{(x, y)}\sbra{\bfQ^{(q),[0, a]}_{X_t}
\bfW^{(q)}_{a}f(Y_t) ; T^X_1>t}}{t}\diff x\\
&=\int_{-\psi_y(t)}^0 e^{qt-\beta x }\frac{\bE_{(x, y)}\sbra{e^{-q\tau^+_{a}}
\bfW^{(q)}_{a}f(Y_{\tau^+_{a}}) ; t< T^X_1, \tau^+_{a}<\tau^t_0}}{t}\diff x\\
&=\frac{1}{t}\int_{-\psi_y(t)}^0 e^{qt-\beta  x}\bE_{(0, y)}\sbra{e^{-q\tau^+_{a-x}}
\bfW^{(q)}_af(Y_{\tau^+_{a-x}}) ; t< T^X_1, \tau^+_{a-x}<\tau^t_{-x}}\diff x \\
&=\frac{1}{t}\int_{-\psi_y(t)}^0 e^{qt-\beta  x}
\bfQ^{(q), [0, a-x]}_0\bfW^{(q)}_af(y)
\diff x \\
&~~-\frac{1}{t}\int_{-\psi_y(t)}^0 e^{qt-\beta  x}\bE_{(0, y)}\sbra{e^{-q\tau^+_{a-x}}
\bfW^{(q)}_af(Y_{\tau^+_{a-x}}) ;  T^X_1 \leq t, \tau^+_{a-x}<\tau^-_0}\diff x \\
&~~-\frac{1}{t}\int_{-\psi_y(t)}^0 e^{qt-\beta  x}\bE_{(0, y)}
\Big{[}e^{-q\tau^+_{a-x}}
\bfW^{(q)}_af(Y_{\tau^+_{a-x}}) ; t<T^X_1,\tau^t_{-x}\leq \tau^+_{a-x}<\tau^-_0\Big{]}\diff x,
\label{410last4} 
\end{align}
where in the first equality we used Theorem \ref{Thm401}, in the second equality we used the Markov property at $t$ and the fact that $X$ cannot go above $a$ before $t$ and cannot below $0$ without jumps. 
By Assumption \ref{Ass203} (ii) and \eqref{upestemate1} for $Y^\prime$, we have 
\begin{align}
\lim_{t\downarrow0}\sup_{y\in E}\absol{\frac{\psi_y (t)}{t}-\ttd(y)}=0.  \label{driftlim}
\end{align}
By Remark \ref{BassSection6}, Assumption \ref{Ass203} (ii) and \eqref{upestemate1} for $Y^\prime$, we have, for $\varepsilon>0$ and $g\in C_0(E)$, 
\begin{align}
\lim_{\varepsilon\downarrow0}\norm{\bfQ^{(q),[0,a+\varepsilon]}_{a}g-g}_\infty
\leq&\lim_{\varepsilon\downarrow0}\sup_{y\in E}\absol{\bE_{(a, y)}\sbra{e^{-q\tau^+_{a+\varepsilon}}g(Y_{\tau^+_{a+\varepsilon}})-g(y);\tau^+_{a+\varepsilon}<\tau^-_0, T^X_1>\frac{\varepsilon}{\un{d}}}}\\
+&\lim_{\varepsilon\downarrow0}\sup_{y\in E}\absol{\bE_{(a, y)}\sbra{e^{-q\tau^+_{a+\varepsilon}}g(Y_{\tau^+_{a+\varepsilon}})-g(y);\tau^+_{a+\varepsilon}<\tau^-_0, T^X_1\leq \frac{\varepsilon}{\un{d}}}}\\
\leq&\lim_{\varepsilon\downarrow0}\sup_{y\in E}\bE_{(a, y)}\sbra{\sup_{s\in[0, \frac{\varepsilon}{\un{d}}]}\absol{e^{-qs}g(Y^\prime_s)-g(y)}}\\
+&\lim_{\varepsilon\downarrow0}2\norm{g}_\infty \rbra{1-\exp\rbra{-(M_\nu+M_\ttn)\frac{\varepsilon}{\un{d}}}}=0, 
\end{align}
therefore, by combining with the fact that the norm of $\bfQ^{(q),[0,a]}_0$ is no more than $1$,
\begin{align}
\lim_{\varepsilon\downarrow0}&\norm{\bfQ^{(q),[0,a+\varepsilon]}_0\bfW^{(q)}_af-\bfQ^{(q),[0,a]}_0\bfW^{(q)}_af}_\infty\\
&=\lim_{\varepsilon\downarrow0}\norm{\bfQ^{(q),[0,a]}_0\bfQ^{(q),[0,a+\varepsilon]}_a\bfW^{(q)}_af-\bfQ^{(q),[0,a]}_0\bfW^{(q)}_af}_\infty\\
&=\lim_{\varepsilon\downarrow0}\norm{\bfQ^{(q),[0,a]}_0(\bfQ^{(q),[0,a+\varepsilon]}_a\bfW^{(q)}_a-\bfW^{(q)}_a)f}_\infty\\
&=\lim_{\varepsilon\downarrow0}\norm{(\bfQ^{(q),[0,a+\varepsilon]}_{a}\bfW^{(q)}_a-
\bfW^{(q)}_a)f}_\infty=0. \label{driftlim2}
\end{align}
By \eqref{driftlim}, \eqref{driftlim2} and \eqref{scaleinverse}, we have
\begin{align}
\lim_{t\downarrow 0}&\sup_{y\in E}\absol{\frac{1}{t}\int_{-\psi_y(t)}^0 e^{qt-\beta  x}
\bfQ^{(q), [0, a-x]}_0\bfW^{(q)}_af(y)
\diff x-f(y)}\\
&=\lim_{t\downarrow 0}\sup_{y\in E}\absol{\frac{\psi_y(t)}{t}\int_{-1}^0 e^{qt-\beta \psi_y(t) x}
\bfQ^{(q),[0,a-\psi_y(t)x]}_0\bfW^{(q)}_af(y)
\diff x-f(y)}\\
&=
\sup_{y\in E}\absol{\ttd(y) \bfQ^{(q),[0,a]}_0\bfW^{(q)}_af(y)-f(y)}
=\lim_{t\downarrow 0}\norm{\ttE^{(q)}_a\bfW^{(q)}_af-f}_\infty=0. 
\label{410last5}
\end{align}
By Assumption \ref{Ass203} (ii), \eqref{Wnormbound} and Remark \ref{BassSection6}, we have, 
for $t\in (0, \frac{1}{\bar{d}})$,
\begin{align}
\sup_{y\in E}&\absol{
\frac{1}{t}\int_{-\psi_y(t)}^0 e^{qt-\beta  x}\bE_{(0, y)}\sbra{e^{-q\tau^+_{a-x}}
\bfW^{(q)}_af(Y_{\tau^+_{a-x}}) ;  T^X_1 \leq t, \tau^+_{a-x}<\tau^-_0}\diff x
}
\\
&\leq e^{qt}\bar{d}\cdot\frac{K_{\bfH^{(q)}}}{\un{d}}e^{M_{\bfH^{(q)}}a}\norm{f}_\infty\sup_{y\in E}\bP_{(0, y)}(T^X_1\leq t)\\
&\leq  e^{qt}\frac{\bar{d}K_{\bfH^{(q)}}}{\un{d}}e^{M_{\bfH^{(q)}}a}\norm{f}_\infty(1-e^{-(M_\nu+M_\ttn)t}), 
\label{410last6}
\end{align}
and
\begin{align}
&\sup_{y\in E}\Big{|}\frac{1}{t}\int_{-\psi_y(t)}^0 e^{qt- x}\bE_{(0, y)}
\Big{[}e^{-q\tau^+_{a-x}}
\bfW^{(q)}_af(Y_{\tau^+_{a-x}}) ; t<T^X_1,\tau^t_{-x}\leq \tau^+_{a-x}<\tau^-_0\Big{]}\diff x\Big{|}\\
&\leq e^{qt}\frac{K_{\bfH^{(q)}}}{\un{d}t}e^{M_{\bfH^{(q)}}a}\norm{f}_\infty\sup_{y\in E}
\int_{-\psi_y(t)}^0e^{-\beta x}\bP_{(0, y)}\rbra{t<T^X_1, \tau^-_{-x}<  \tau^-_0\land\tau^+_{a+1}}\diff x \\
&\leq e^{qt}\frac{K_{\bfH^{(q)}}}{\un{d}t}e^{M_{\bfH^{(q)}}a}\norm{f}_\infty\sup_{y\in E}
\int_{-\psi_y(t)}^0 e^{-\beta x}
\sum_{n\in\bN} \bE_{(0, y)}\Big{[} \tilde{\rho}(Y_{T^X_n-}, [-X_{T^X_n-}  ,-X_{T^X_n-}-x))    \\
&\qquad \qquad+\tilde{\ttn}(Y_{T^X_n-}, [-X_{T^X_n-}  ,-X_{T^X_n-}-x)) ; t<T^X_n\leq\tau^-_0 \land \tau^+_{a+1}\Big{]}
\diff x\\
&= e^{qt}\frac{K_{\bfH^{(q)}}}{\un{d}t\beta}e^{M_{\bfH^{(q)}}a}\norm{f}_\infty\sup_{y\in E}
\sum_{n\in\bN} \bE_{(0, y)}\Big{[}\int_{(-X_{T^X_n-}, -X_{T^X_n-}+\psi_y(t))}\rbra{e^{\beta\psi_y(t)}-e^{\beta(X_{T^X_n-}+x)}} \tilde{\rho}(Y_{T^X_n-}, \diff x)    \\
&\qquad +
\int_{(-X_{T^X_n-}, -X_{T^X_n-}+\psi_y(t))}\rbra{e^{\beta\psi_y(t)}-e^{\beta(X_{T^X_n-}+x)}} \tilde{\ttn}(Y_{T^X_n-}, \diff x)  ; t<T^X_n\leq\tau^-_0 \land \tau^+_{a+1}\Big{]}\\
&\leq e^{qt}\frac{\bar{d}tK_{\bfH^{(q)}}}{\un{d}t\beta}e^{M_{\bfH^{(q)}}a}\norm{f}_\infty\rbra{e^{\beta\bar{d}t}-1}\sup_{y\in E}
\sum_{n\in\bN}\bP_{(0, y)}\rbra{T^X_n\leq\tau^-_0 \land \tau^+_{a+1}}. \label{410last7}
\end{align}
Since the drift size of $X$ is more than $\un{d}$ and by Remark \ref{BassSection6}, we have, for $(x, y)\in(0 ,a)\times E$ and $n\in \bN$, 
\begin{align}
\bP_{(x, y)}\rbra{T^X_{1}\leq\tau^-_0 \land \tau^+_{a+1}}\leq \rbra{1-e^{-(M_\nu+M_\ttn)\frac{a+1}{\bar{d}}}}
\end{align}
and so
\begin{align}
\bP_{(0, y)}\rbra{T^X_n\leq\tau^-_0 \land \tau^+_{a+1}}
&\leq \bE_{(0, y)}\sbra{\bP_{(X_{T^X_{n-1}}, Y_{T^X_{n-1}})}\rbra{T^X_{1}\leq\tau^-_0 \land \tau^+_{a+1}};T^X_{n-1}\leq\tau^-_0 \land \tau^+_{a+1}}\\
&\leq\bP_{(0, y)}\rbra{T^X_{n-1}\leq\tau^-_0 \land \tau^+_{a+1}}\rbra{1-e^{-(M_\nu+M_\ttn)\frac{a+1}{\bar{d}}}}, 
\end{align}
therefore, we have 
\begin{align}
\sup_{y\in E}
\sum_{n\in\bN}\bP_{(0, y)}\rbra{T^X_n\leq\tau^-_0 \land \tau^+_{a+1}}
\leq e^{(M_\nu+M_\ttn)\frac{a+1}{\bar{d}}}-1. \label{410last8}
\end{align}
By \eqref{410last1}, \eqref{410last2}, \eqref{410last3}, \eqref{410last4}, \eqref{410last5}, \eqref{410last6}, \eqref{410last7} and \eqref{410last8}, the proof is complete. 
\end{proof}
\begin{Lem}\label{Lem410}
For $q>0$, $\beta>M_{\bfH^{(q)}}$ and $f\in C_0(E)$, we have 
\begin{align}
\lim_{t\downarrow 0}\sup_{y\in E}
\absol{
\frac{\bE_{(0,y)} \sbra{e^{\beta X_t}{\bfV}^{(q)}_\beta f(Y_t)}-{\bfV}^{(q)}_\beta f(y)}{t}-
q {\bf{V}}^{(q)}_\beta f(y)-f(y)}.
\end{align}
\end{Lem}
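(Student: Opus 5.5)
The plan is to reduce the statement (the limit is meant to be $0$) to Lemmas \ref{Lem408} and \ref{Lem409} by a change of variables in the definition of ${\bfV}^{(q)}_\beta$. We use \eqref{additive_property} and Fubini's theorem, justified by \eqref{Wnormbound} since $\beta>M_{\bfH^{(q)}}$, together with the convention $\bfW^{(q)}_x\equiv 0$ for $x<0$. Substituting $x\mapsto x-X_t$ inside the integral then gives, for $t>0$ and $y\in E$,
\begin{align}
\bE_{(0,y)}\sbra{e^{\beta X_t}{\bfV}^{(q)}_\beta f(Y_t)}
=\bE_{(0,y)}\sbra{\int_\bR e^{-\beta(x-X_t)}\bfW^{(q)}_{x}f(Y_t)\diff x}
=\int_\bR e^{-\beta x}\bE_{(x,y)}\sbra{\bfW^{(q)}_{X_t}f(Y_t)}\diff x .
\end{align}
Here the middle expression, after the shift, reads $\int_\bR e^{-\beta x'}\bE_{(0,y)}[\bfW^{(q)}_{x'+X_t}f(Y_t)]\diff x'$, and $x'+X_t$ is $X_t$ under $\bP_{(x',y)}$. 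Integrability holds because $e^{\beta X_t}\le e^{\beta\bar d t}$, as $X$ moves up by at most $\bar d t$.

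Since $\bfW^{(q)}_x f=0$ for $x<0$, we have ${\bfV}^{(q)}_\beta f(y)=\int_0^\infty e^{-\beta x}\bfW^{(q)}_xf(y)\diff x$. We then split the integral above over $\bR$ into $[0,\infty)$ and $(-\infty,0)$, which gives
\begin{align}
\frac{\bE_{(0,y)}\sbra{e^{\beta X_t}{\bfV}^{(q)}_\beta f(Y_t)}-{\bfV}^{(q)}_\beta f(y)}{t}
=\int_0^\infty e^{-\beta x}\frac{\bE_{(x,y)}\sbra{\bfW^{(q)}_{X_t}f(Y_t)}-\bfW^{(q)}_xf(y)}{t}\diff x
+\int_{-\infty}^0 e^{-\beta x}\frac{\bE_{(x,y)}\sbra{\bfW^{(q)}_{X_t}f(Y_t)}}{t}\diff x .
\end{align}
By Lemma \ref{Lem408}, the first term converges to $q{\bfV}^{(q)}_\beta f(y)$ uniformly in $y$. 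By Lemma \ref{Lem409}, the second term converges to $f(y)$ uniformly in $y$. The triangle inequality then gives the claim.

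The main obstacle is the first step: we must justify Fubini's theorem and the translation uniformly in $y$. This requires the exponential bound \eqref{Wnormbound} combined with $e^{\beta X_t}\le e^{\beta \bar d t}$, the measurability in Lemma \ref{Lem504}, and the fact that $\bfW^{(q)}_x$ vanishes for negative $x$, so that no boundary term at $x=0$ is lost in the split. Everything else is bookkeeping already carried out in Lemmas \ref{Lem408} and \ref{Lem409}.
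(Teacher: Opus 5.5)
Your proposal is correct and follows the paper's proof. Like the paper, you use Fubini's theorem and the additive property to rewrite $\bE_{(0,y)}\sbra{e^{\beta X_t}\bfV^{(q)}_\beta f(Y_t)}$ as $\int_\bR e^{-\beta x}\bE_{(x,y)}\sbra{\bfW^{(q)}_{X_t}f(Y_t)}\diff x$, split the integral at $0$, and then apply Lemmas \ref{Lem408} and \ref{Lem409}. Your integrability check via \eqref{Wnormbound} and $X_t\le \bar{d}t$ spells out the justification for Fubini that the paper leaves implicit.
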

\begin{proof}
By Fubini's theorem and Lemmas \ref{Lem408} and \ref{Lem409}, we have, for $q>0$, $\beta>M_{\bfH^{(q)}}$ and $f\in C_0(E)$,
\begin{align}
&\lim_{t\downarrow 0}\sup_{y\in E}
\absol{
\frac{\bE_{(0,y)} \sbra{e^{\beta X_t}{\bfV}^{(q)}_\beta f(Y_t)}-{\bfV}^{(q)}_\beta f(y)}{t}-
q {\bf{V}}^{(q)}_\beta f(y)-f(y)}\\
&=\lim_{t\downarrow 0}\sup_{y\in E}
\absol{
\frac{ \bE_{(0,y)} \sbra{\int_\bR e^{-\beta (x-X_t)} \bfW^{(q)}_{x}f(Y_t)\diff x}-{\bfV}^{(q)}_\beta f(y)}{t}-
q {\bf{V}}^{(q)}_\beta f(y)-f(y)}\\ 
&=\lim_{t\downarrow 0}\sup_{y\in E}
\absol{
\int_\bR e^{-\beta x}\frac{\bE_{(x,y)} \sbra{\bfW^{(q)}_{X_t}f(Y_t)}-\bfW^{(q)}_xf(y)}{t}\diff x-
q {\bf{V}}^{(q)}_\beta f(y)-f(y)}=0 . 
\end{align}
The proof is complete. 
\end{proof}
\begin{proof}[Proof of Theorem \ref{Thm300}]
By Lemmas \ref{Lem406} and \ref{Lem410}, 
for $q>0$, $\beta>M_{\bfH^{(q)}}$ and $f\in C_0(E)$, 
there exists a function $\bfg^{(q)}_\beta \in C_0(E)$ such that 
\begin{align}
\lim_{t\downarrow0}\sup_{y\in E}\absol{\frac{\bE_{(0,y)} \sbra{{\bf{V}}^{(q)}_\beta f( Y_t) -{\bf{V}}^{(q)}_\beta f(y); t<T^Y_1}}{t}
-\bfg^{(q)}_\beta (y)}=0. \label{Yprimelim}
\end{align}
Since $Y$ evolves as the deterministic process $Y^\prime$ until its first jump and by Remark \ref{BassSection6}, we have, for $q>0$, $\beta>M_{\bfH^{(q)}}$, $f\in C_0(E)$ and $y\in E$, 
\begin{align}
\frac{\bE_{(0,y)} \sbra{{\bf{V}}^{(q)}_\beta f( Y_t) -{\bf{V}}^{(q)}_\beta f(y); t<T^Y_1}}{t}
=\frac{{\bf{V}}^{(q)}_\beta f( \phi_t(y)) -{\bf{V}}^{(q)}_\beta f(y)}{t}
\bP_{(0,y)}(t<T^Y_1)
\end{align}
and so
\begin{align}
&\sup_{y\in E}\absol{\frac{{\bf{V}}^{(q)}_\beta f( \phi_t(y)) -{\bf{V}}^{(q)}_\beta f(y)}{t}-\frac{\bE_{(0,y)} \sbra{{\bf{V}}^{(q)}_\beta f( Y_t) -{\bf{V}}^{(q)}_\beta f(y); t<T^Y_1}}{t}}\\
&=
\sup_{y\in E}\frac{\bE_{(0,y)} \sbra{{\bf{V}}^{(q)}_\beta f( Y_t) -{\bf{V}}^{(q)}_\beta f(y); t<T^Y_1}}{t}
\rbra{\frac{1}{\bP_{(0,y)}(t<T^Y_1)}-1}\\
&\leq \sup_{y\in E}\frac{\bE_{(0,y)} \sbra{{\bf{V}}^{(q)}_\beta f( Y_t) -{\bf{V}}^{(q)}_\beta f(y); t<T^Y_1}}{t}
(e^{M_\nu t}-1). \label{sa}
\end{align}
By \eqref{Yprimelim} and \eqref{sa}, we have, for $q>0$, $\beta>M_{\bfH^{(q)}}$ and $f\in C_0(E)$, 
\begin{align}
\lim_{t\downarrow0}\sup_{y\in E}\absol{\frac{\bE_{(0,y)} \sbra{{\bf{V}}^{(q)}_\beta f( Y_t)} -{\bf{V}}^{(q)}_\beta f(y)}{t}
-\bfg^{(q)}_\beta (y)}=0, \label{Yprimelim2}
\end{align}
and ${\bf{V}}^{(q)}_\beta f \in D(\cA_Y)$. 
By Lemmas \ref{Lem406} and \ref{Lem410}, \eqref{Dbetadecom}, \eqref{Yprimelim} and \eqref{Yprimelim2}, we have, for $q>0$, $\beta>M_{\bfH^{(q)}}$ and $f\in C_0(E)$, 
\begin{align}
\bfF^{(\beta)}{\bf{V}}^{(q)}_\beta f(y)=q {\bf{V}}^{(q)}_\beta f(y)+f(y),\qquad y\in E. 
\end{align}
It remains to show that, for every$q\geq 0$, there exists a sufficiently large $\beta \geq 0$ such that $\bfF^{(\beta)}-q\bfI$ is invertible.
Here, we follow the argument in \cite[p.159]{EngNag2000}.
For $q\geq 0$, we write $\tilde{\bfR}^{(q)}_{Y^\prime}$ for the $q$-resolvent operator of $Y^\prime$. 
For $q, \beta > 0$ and $a := \frac{q}{\beta}+\bar{d}$, we have 
\begin{align}
\bfF^{(\beta)}-q\bfI
&=(\beta a-q)\bfI+\rbra{(\bff^{(\beta)}_1-\beta a)\bfI+\bfF^{(\beta)}_2+\cA_{Y^\prime}}\\
&=\rbra{\bfI+\rbra{(\bff^{(\beta)}_1-\beta a)\bfI+\bfF^{(\beta)}_2}\tilde{\bfR}^{(\beta a-q)}_{Y^\prime}}\rbra{(\beta a-q)\bfI+\cA_{Y^\prime}},
\end{align}
where in the last equality we used the fact that $\tilde{\bfR}^{(\beta a-q)}_{Y^\prime}\rbra{(\beta a-q)\bfI+\cA_{Y^\prime}}=\bfI$. 
By the forms of $\bff^{(\beta)}_1$ and $\bfF^{(\beta)}_2$, the norm of $(\bff^{(\beta)}_1-\beta a)\bfI+\bfF^{(\beta)}_2$ is no more than $(a-\bar{d})\beta+M_\ttn+2M_\nu$. 
In addition, the norm of $\tilde{\bfR}^{(\beta a-q)}_{Y^\prime}$ is no more than $\frac{1}{\beta a-q}$. 
Thus, the norm of $\rbra{(\bff^{(\beta)}_1-\beta a)I+\bfF^{(\beta)}_2}\tilde{\bfR}^{(\beta a-q)}_{Y^\prime}$ is  no more than 
\begin{align}
\frac{(a-\bar{d})\beta+M_\ttn+2M_\nu}{\beta a-q}=
\frac{q+M_\ttn+2M_\nu}{\beta\bar{d}}, 
\end{align}
which is less than $1$ for $\beta >\frac{q+M_\ttn+2M_\nu}{\bar{d}}$. 
So, for $q, \beta >0$ with $\beta >\frac{q+M_\ttn+2M_\nu}{\bar{d}}$, the operator $I+\rbra{(\bff^{(\beta)}_1-\beta a)I+\bfF^{(\beta)}_2}\tilde{\bfR}^{(\beta a-q)}_{Y^\prime}$ is invertible, and thus $\bfF^{(\beta)}-q\bfI$ is invertible. 
The proof is complete. 
\end{proof}

\section*{Acknowledgments}
The author was supported by JSPS KAKENHI grant no. JP21K13807 and JSPS Open Partnership Joint Research Projects grant no. JPJSBP120209921. 
In addition, from February 2023 to February 2025, the author stayed at Centro de Investigaci\'on en Matem\'aticas (CIMAT), Mexico, as a JSPS Overseas Research Fellow. The author is grateful for the stimulating research environment at CIMAT, which helped develop the important ideas of this paper.
\section*{Declaration of AI use}
In preparing this paper, the author used ChatGPT to search for known mathematical results, commonly used techniques, and relevant literature. 
ChatGPT was also used to translate Japanese text into English and to assist in checking the correctness of a small part of the manuscript. 
All results provided by ChatGPT have been carefully verified by the author, who takes full responsibility for the content of this paper.

\bibliographystyle{jplain}
\bibliography{NOBA_references_07}

\end{document}